\theoremstyle{theorem}
\newtheorem{thm}{Theorem}[section]
\newtheorem{conj}[thm]{Conjecture}
\newtheorem{prp}[thm]{Proposition}
\newtheorem{crl}[thm]{Corollary}
\newtheorem{lemma}[thm]{Lemma}
\theoremstyle{definition}
\newtheorem{df}[thm]{Definition}
\newtheorem{ex}[thm]{Example}
\theoremstyle{remark}
\newtheorem{rmk}[thm]{Remark}
\newcommand{\Hilb}{\operatorname{Hilb}}
\newcommand{\KHilb}[1]{\operatorname{Hilb}^{#1}_{0,k}(C)}
\newcommand{\PHilb}[1]{\operatorname{Hilb}^{#1}_{0,1}(C)}
\newcommand{\UHilb}{\operatorname{UHilb}}
\newcommand{\Li}{\mathbb L}
\newcommand{\C}{\mathbb K}
\newcommand{\Lo}{\mathcal{L}}
\newcommand{\Cont}{\mathcal{X}}
\newcommand{\ring}{\widehat{\mathcal{O}}_{C,0}}
\begin{document}

\title[Motivic classes of fixed-generators Hilbert schemes]{Motivic classes of fixed-generators Hilbert schemes of unibranch curve singularities and Igusa zeta functions}

\author{Ilaria Rossinelli}
\address{Department of Mathematics, EPFL Lausanne}
\email{ilaria.rossinelli@epfl.ch}
\date{}
\maketitle

\begin{abstract}
This paper delves into the study of Hilbert schemes of unibranch plane curves whose points have a fixed number of minimal generators. Building on the work of Oblomkov, Rasmussen and Shende we provide a formula for their motivic classes and investigate the relationship with principal Hilbert schemes of the same given unibranch curve. 

In addition, the paper specializes this study to the case of $(p,q)$-curves, where we obtain more structured results for the motivic classes of fixed-generators Hilbert schemes: their positivity and topological invariance, and an explicit relationship to one-generator schemes i.e. principal ideals in $\ring$.

Finally, we focus on a special open component in the one-generator locus, whose motivic class is naturally related to the motivic measure on the arc scheme $\mathbb A^2_\infty$ of the plane introduced by Denef and Loeser as well as to the Igusa zeta function. We also provide an explicit formulation of these motivic classes in terms of an embedded resolution of the singularity, proving their polynomiality as well as making them an interesting topological invariant of the given curve.
\end{abstract}

\tableofcontents

\section{Introduction} \label{intro}

Given a plane curve singularity, we are naturally interested in studying its geometric and topological properties. 
Algebraic links, originally presented in \cite{milnor} as the topological data of the given curve, are in perfect correspondence with the geometry of the singularity as proved in \cite[Chapter III.8]{Brieskorn}. For a given germ $(C, 0)$ of an isolated reduced complex plane curve singularity, its link is defined as the nonsingular one-dimensional real submanifold of the sphere with radius $\epsilon << 1$ 
\[ L = S^3_{\epsilon , 0} \cap C .\] 

Despite being easy to define, links turn out to be very hard to be handled directly, and the associated (co)homological invariants still lack an interpretation in terms of the given singularity predicted by the equivalence in \cite{Brieskorn}. 

Starting in \cite{cdgz} and consequently in the works \cite{os}, \cite{ors}, and many more, Hilbert schemes have been considered as a possible geometric setting to answer these interpretation questions. Hilbert schemes whose points have a fixed number of minimal generators are defined as 
$$\KHilb{n}= \{ I \in \Hilb^n_0(C) \, | \, \dim_\C I/m_0I =k  \} $$ 
and are, in particular, the central object of interest in the conjecture proposed by Oblomkov, Rasmussen and Shende in \cite{ors}. This conjecture originates as a homological enhancement of the version presented in \cite{os} and proved in \cite{mau}, and relates the virtual Poincaré polynomials $\omega(\KHilb{l})$ with the superpolynomial $\mathcal{P}_L$ associated to the triply-graded homology of the link of the curve singularity.

In this paper, we focus on the former, studying the \textit{motivic class} of $\KHilb{n}$, that can be later specialized to its weight polynomial.

We begin with Section \ref{sec:motivic-classes} and our first result, which is the existence and uniqueness of some special representatives for the points of $\KHilb{n}$ using their $t$-adic valuations in $\ring \subseteq \C[[t]]$ and the semigroup ideals of $\Gamma_C$. 

\begin{thm}
Let $I \in \KHilb{n}$. Then, there exist a minimal set of generators $g_1 , \dots , g_k$ such that the values $val_t(g_i) = n_i$ form an independent $k$-uple in the sense of Definition \ref{def:independent-int-numbers}, and this is the only possible independent $k$-uple of valuations arising from minimal sets of generators of $I$. 
\end{thm}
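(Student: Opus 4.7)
The plan is to identify the independent $k$-tuple $(n_1,\dots,n_k)$ with an intrinsic invariant of $I$, namely the set of minimal elements of the semigroup ideal $val_t(I)\subseteq\Gamma_C$ under the partial order $a\le b \iff b-a\in\Gamma_C$. This identification simultaneously handles uniqueness — the set of minimal elements manifestly does not depend on any choice of generators — and tells us what valuations to aim for in the existence construction.

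For existence, I would begin from an arbitrary minimal generating set $h_1,\dots,h_k$ with $w_i = val_t(h_i)$. If the tuple $(w_1,\dots,w_k)$ is not independent, then there exist $i\neq j$ with $w_i-w_j\in\Gamma_C$; picking $f\in\ring$ of valuation $w_i-w_j$ and $c\in\C^\times$ chosen so that the leading coefficient of $h_i-cfh_j$ vanishes, the replacement $h_i\mapsto h_i-cfh_j$ still produces a minimal generating set (since $h_i$ is modified only modulo the $\ring$-submodule spanned by the remaining generators) but now has strictly larger $t$-adic valuation. Iterating the reduction produces a Cauchy sequence in the $t$-adic topology, converging in the complete ring $\ring$ to an element whose valuation lies in the set of semigroup-minimal elements of $val_t(I)$; equivalently, one may truncate modulo a sufficiently high power of $t$, justified by the finite colength $n$ of $I$, and terminate in finitely many steps.

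For uniqueness, let $g_1,\dots,g_k$ be any independent minimal generating set with valuations $n_i$. Independence forces the $n_i$ to be pairwise incomparable, so each is a minimal element of $val_t(I)$. Conversely, any $m\in val_t(I)$ is the valuation of some $\sum_j a_j g_j$ with $a_j\in\ring$, hence equals $val_t(a_{j_0})+n_{j_0}$ for some index $j_0$, placing $m$ in $n_{j_0}+\Gamma_C$. Thus $\{n_1,\dots,n_k\}$ coincides with the set of minimal elements of $val_t(I)$, which is intrinsic to $I$ and has cardinality equal to $\dim_\C I/m_0I = k$.

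The main obstacle is making the iterative reduction in the existence step rigorous: the procedure can in principle modify a single generator infinitely many times, and one must carefully invoke the $t$-adic completeness of $\ring$ — or equivalently truncate using the finite colength of $I$ — to ensure that the limit is a genuine element of $\ring$ realizing the prescribed independent valuation. A minor secondary subtlety is verifying that the number of semigroup-minimal elements of $val_t(I)$ really equals $k$; this is the Nakayama-type comparison between the minimal number of ideal generators and the number of semigroup-minimal valuations, and is also implicit in the termination of the existence argument above.
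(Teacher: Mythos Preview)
Your approach has a genuine gap stemming from a single misconception: you implicitly treat the valuation of a sum as the minimum of the valuations of the summands, and hence identify the independent $k$-tuple with the set of $\Gamma_C$-minimal elements of $val_t(I)$. Both claims fail once cancellation in $\ring$ is taken into account. In the uniqueness step you write that any $m\in val_t(I)$ is the valuation of some $\sum_j a_j g_j$ and ``hence equals $val_t(a_{j_0})+n_{j_0}$''; this deduction is invalid when two summands share the same lowest order and their leading terms cancel. The paper flags exactly this phenomenon when it notes that in general only the inclusion $\coprod_i\{n_i+\Gamma_C\}\subseteq val_t(I)$ holds, not equality. Concretely, for $\ring=\C[[t^4,t^6+t^7]]$ the paper exhibits an ideal $I\in\Hilb^4_{0,3}(C)$ with independent $3$-tuple $(8,10,19)$ whose valuation semigroup has the \emph{four} minimal elements $8,10,17,19$; the extra generator $17$ arises from a cancellation between combinations of the generators of valuation $8$ and $10$. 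Thus the ``Nakayama-type comparison'' you invoke --- that the number of semigroup-minimal elements of $val_t(I)$ equals $\dim_\C I/m_0I$ --- is false, and the intrinsic invariant you propose does not characterise the tuple.

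The same conflation appears in your existence step: you paraphrase non-independence as ``there exist $i\neq j$ with $w_i-w_j\in\Gamma_C$'', but the actual definition says $w_i\in\Delta_{h_i}=val_t(h_1,\dots,\widehat{h_i},\dots,h_k)$, which is a strictly larger set than $\bigcup_{j\neq i}(w_j+\Gamma_C)$ in general. This part is easily repaired --- subtract from $h_i$ the full combination $\sum_{j\neq i}a_jh_j$ realising valuation $w_i$, rather than a single term $cfh_j$ --- and the paper packages termination by taking the lexicographically maximal tuple among all minimal generating sets. For uniqueness, however, the paper abandons any semigroup-only description and instead compares two independent generating sets directly via the invertible change-of-basis matrix over $\ring$ furnished by Mather's contact equivalence, arguing by induction on the ordered tuple; you will need an argument of this kind, since the independent $k$-tuple is not recoverable from $val_t(I)$ alone.
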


The computation is carried out also in Section \ref{sec:motivic-classes}, where we explicitly present the motivic classes of the partition in terms of certain subvarieties of determinantal varieties arising from the syzygies encoding the $\ring$-combinations corresponding to valuations in $\Gamma$-semigroup ideals. We obtain the following result. 

\begin{thm} \label{main-formula unibranch}
Let $(C,0)$ be a unibranch plane curve. We have $$[\KHilb{n}] 
= \sum_{\Delta} 
\Li^{\sum_i |\Gamma_{> \nu_i} \setminus \Delta |}   
\sum_{\underline{n}} 
\Li^{- \sum_i |\{n_i+H \} \cap \Delta|}
\sum_{a = 1 \dots m_\Delta} 
[S_a]\Li^{ - a } $$
for every $n,k \geq 1$ where 
$$S_a = \{ p \in Syz \, | \, rk(M_\Delta (p))=a \}$$ and
$m_\Delta$ is defined in Definition \ref{def-syzy} and $M_\Delta$ is defined in Corollary \ref{crl-formula-unibranch}. \end{thm}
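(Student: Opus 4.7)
The plan is to reduce the computation of $[\KHilb{n}]$ to a finite sum of motivic classes of strata indexed by combinatorial data attached to each ideal $I$. By the preceding theorem, every $I \in \KHilb{n}$ admits a unique independent $k$-tuple $\underline{n} = (n_1,\ldots,n_k)$ of valuations coming from a minimal set of generators, which yields a locally closed stratification
\[
\KHilb{n} = \bigsqcup_{\underline{n}} X_{\underline{n}}.
\]
Additivity of the motivic class reduces the problem to computing $[X_{\underline{n}}]$ separately for each admissible tuple $\underline{n}$, so the outer sum over $\underline{n}$ in the formula is already justified once this stratification is established.

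For a fixed $\underline{n}$, I would further stratify $X_{\underline{n}}$ by the value semigroup ideal $\Delta \subseteq \Gamma_C$ associated to $I$, and parametrize each cell by the coefficients of power-series expansions of the canonical minimal generators
\[
g_i = t^{n_i} + \sum_{h \in \Gamma_{>n_i}} a_{i,h}\, t^h.
\]
The combinatorics of $\Delta$ splits the coefficient data into two groups: coefficients indexed by $h \in \Gamma_{>\nu_i} \setminus \Delta$ are free parameters and account for the positive Lefschetz factor $\Li^{\sum_i |\Gamma_{>\nu_i} \setminus \Delta|}$, while coefficients indexed by $h \in \{n_i+H\} \cap \Delta$ record the redundancies coming from replacing $g_i$ by $g_i + \sum_{j\neq i} u_{ij} g_j$ with $u_{ij} \in \ring$. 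These redundancies must be gauge-fixed and thus contribute the inverse Lefschetz factor $\Li^{-\sum_i |\{n_i+H\}\cap \Delta|}$.

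The remaining step is to impose the condition that a given choice of free coefficients actually defines an ideal whose value semigroup ideal equals $\Delta$ (and no larger one). This amounts to requiring that specific $\ring$-linear combinations of the generators produce elements with valuations in $\Delta$, which by Corollary \ref{crl-formula-unibranch} is captured by a rank condition on the syzygy matrix $M_\Delta$. Stratifying the space of syzygies $Syz$ by $rk(M_\Delta)=a$ gives the locally closed subvarieties $S_a$; each rank-$a$ locus contributes $[S_a]\,\Li^{-a}$, with the negative power of $\Li$ measuring the codimension of the rank stratum inside the affine ambient space, yielding the innermost sum.

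The main obstacle will be the bookkeeping: one has to show rigorously that the various coefficient strata assemble into a piecewise trivial fibration whose fibres have the correct affine dimensions, so that motivic multiplicativity applies and the negative powers of $\Li$ encode the correct quotients rather than stacky corrections. A related technical point is verifying that the gauge action of $\ring$-units and generator replacements is free on the relevant locus; this freeness should follow from the uniqueness of the independent $k$-tuple guaranteed by the preceding theorem, but it requires a careful check that the canonical choice of minimal generators is preserved under the quotient.
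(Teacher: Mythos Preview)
Your stratification by the independent $k$-tuple $\underline{n}$ and then by the value semigroup ideal $\Delta$ matches the paper exactly, and your interpretation of $\Li^{-\sum_i |\{n_i+H\}\cap\Delta|}$ as coming from gauge-fixing redundant coefficients is correct: the paper packages this as the triviality of the fibration $\Psi_{\underline{n}}$ over each $\Delta$-stratum (Theorem~\ref{trivial-fib}).

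However, your account of the innermost sum $\sum_a [S_a]\,\Li^{-a}$ contains a genuine gap. The factor $\Li^{-a}$ does \emph{not} measure the codimension of the rank stratum $S_a$ inside $Syz$; the integer $a$ has nothing to do with the position of $S_a$ in its ambient affine space. What actually happens is this: by Theorem~\ref{thm:syz-ors} (which is \cite[Proposition~12]{ors}), a point $\lambda \in Gen$ satisfies $val_t(J_\lambda)=\Delta$ if and only if there exists $p \in Syz$ solving a system of equations that are \emph{affine-linear in $\lambda$}, and moreover such $p$ is \emph{unique}. The uniqueness identifies $\{\lambda \in Gen : val_t(J_\lambda)=\Delta\}$ with the incidence variety $\{(\lambda,p) : \text{equations hold}\} \subseteq Gen \times Syz$, and then one projects to $Syz$: the fibre over $p$ is an affine subspace of $Gen$ of codimension exactly $rk(M_\Delta(p))$. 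Stratifying $Syz$ by this rank yields $\sum_a [S_a]\,\Li^{\dim Gen - a}$, and $\dim Gen = \sum_i |\Gamma_{>\nu_i}\setminus\Delta|$ is precisely the positive Lefschetz factor you identified.

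So the missing ingredient in your outline is the input from \cite{ors}: the ``if and only if'' together with the uniqueness of $p$. Without uniqueness you cannot view the locus in $Gen$ as fibred over $Syz$ at all, and without the affine-linearity in $\lambda$ you do not get affine-space fibres whose codimension equals the rank. Your final paragraph about checking freeness of the gauge action is a separate (and already resolved) concern handled by the trivial-fibration argument for $\Psi_{\underline{n}}$; it does not address this issue.
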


In Section \ref{sec:pq-curves}, we specialize the previous discussion to the simpler case of $(p,q)$-curves, as the semigroup ideals of $\Gamma_C$ are easier to list, without involving the additional structure coming from non-trivial combinations in $\ring$ as before. Consequently, we obtain the following expression for $[\KHilb{n}]$.

\begin{thm} \label{main-formula-pq}
Let $(C,0)$ be a $(p,q)$-curve. We have 
$$[\KHilb{n}]  = \sum_{\underline{n} \in M_n} \left( \prod_i [\PHilb{{n_i}}] \right) \Li^{- \sum_i |\{ n_i + H \} \cap \Gamma_{\underline{n}}|} = \sum_{\underline{n} \in M_n} \Li^{ \sum_i |\{ n_i + H \} \cap \Gamma_C|  - \sum_i |\{ n_i + H \} \cap \Gamma_{\underline{n}}|}$$ for every $n,k \geq 1$ where
$$\Gamma_{\underline{n}}  = \coprod_i \{ n_i + \Gamma_C \}  .$$
\end{thm}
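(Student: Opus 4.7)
The plan is to specialize Theorem~\ref{main-formula unibranch} to the $(p,q)$ case, exploiting the rigidity of $\Gamma_C=\langle p,q\rangle$. First, I would invoke the stratification provided by the first theorem of the excerpt: $\KHilb{n}$ decomposes as a disjoint union over $\underline{n}\in M_n$ of locally closed strata consisting of ideals whose canonical independent $k$-tuple of generator valuations equals $\underline{n}$. It thus suffices to compute the motivic class of each such stratum and sum.

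Next, I would analyze the three summations of Theorem~\ref{main-formula unibranch} in the $(p,q)$ setting. The key observation is that $\Gamma_C=\langle p,q\rangle$ with $\gcd(p,q)=1$ admits a unique normal form for its elements, so the only semigroup ideal $\Delta$ that can arise as the value semigroup of an ideal with generator valuations $\underline{n}$ is $\Delta=\Gamma_{\underline{n}}=\coprod_i\{n_i+\Gamma_C\}$, the disjoint union recording which generator produces each element. Moreover, I expect the syzygy varieties $S_a$ to be trivial in this case: no non-trivial $\ring$-combinations among generators can force a valuation to be reached in more than one way, so $m_\Delta=0$ and the innermost sum over $a$ degenerates. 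The outer sum over $\Delta$ likewise collapses to a single term.

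With these simplifications, the general formula reduces to a single sum over $\underline{n}\in M_n$ of a Lefschetz factor
$$\Li^{\,\sum_i |\Gamma_{>\nu_i}\setminus\Gamma_{\underline{n}}|\;-\;\sum_i |\{n_i+H\}\cap\Gamma_{\underline{n}}|}.$$
To match the stated expression, I would rewrite $|\Gamma_{>\nu_i}\setminus\Gamma_{\underline{n}}|$ as $|\{n_i+H\}\cap\Gamma_C|$, which gives directly the second equality of the theorem. The first equality then follows by inserting the principal-Hilbert-scheme formula $[\PHilb{n_i}]=\Li^{|\{n_i+H\}\cap\Gamma_C|}$, itself the $k=1$ specialization of the same machinery, and factoring the resulting product across the $k$ generators.

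The main obstacle will be rigorously establishing the triviality of the syzygy contribution for $(p,q)$-curves. One has to verify, using the normal-form uniqueness in $\langle p,q\rangle$, that the matrix $M_\Delta$ of Corollary~\ref{crl-formula-unibranch} is generically of rank zero and that no higher-rank locus appears, so that Definition~\ref{def-syzy} indeed gives $m_\Delta=0$. This is the technical heart of the specialization and is what allows the triple sum to collapse into the clean product formula above.
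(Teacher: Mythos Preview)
Your identification of the key simplification---that for a $(p,q)$-curve the only semigroup ideal arising from an ideal with independent valuation tuple $\underline{n}$ is $\Delta=\Gamma_{\underline{n}}$---is correct and is exactly the observation the paper records as Equation~(\ref{eq-s}). But your mechanism for collapsing the syzygy sum is wrong. You claim $m_\Delta=0$; this is false for $k\geq 2$. The integer $m_\Delta$ counts generators of the first syzygy module of $I_\Delta=(t^{n_1},\dots,t^{n_k})\subseteq\C[\Gamma]$, and for any two monomial generators there is always at least one relation (e.g.\ $t^{a}\cdot t^{n_i}=t^{b}\cdot t^{n_j}$ whenever $n_i+a=n_j+b$ with $a,b\in\Gamma$). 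What is special in the $(p,q)$ case is not the absence of syzygies but the fact that the syzygy equations of Theorem~\ref{thm:syz-ors} are satisfied for \emph{every} $\lambda\in Gen$: since $\ring=\C[[t^p,t^q]]$ has monomial basis, every $\ring$-combination of the $f_{n_i}$ has valuation in $\Gamma_{\underline{n}}$, so $val_t(J_\lambda)=\Gamma_{\underline{n}}$ automatically. This is a statement about the solution set, not about $m_\Delta$. Relatedly, your proposed identity $|\Gamma_{>n_i}\setminus\Gamma_{\underline{n}}|=|\{n_i+H\}\cap\Gamma_C|$ is also false: one has $\Gamma_{>n_i}\setminus(n_i+\Gamma)=(n_i+H)\cap\Gamma$, but removing the larger set $\Gamma_{\underline{n}}$ gives $|\Gamma_{>n_i}\setminus\Gamma_{\underline{n}}|=|(n_i+H)\cap\Gamma|-|(n_i+H)\cap\Gamma_{\underline{n}}|$, strictly smaller when $k\geq 2$.

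The paper does not reach Theorem~\ref{main-formula-pq} by specializing the syzygy formula of Theorem~\ref{main-formula unibranch} at all. It goes back one step and uses the trivial fibration $\Psi_{\underline{n}}:\prod_i\PHilb{n_i}\to\KHilb{n,\underline{n}}$ of Theorem~\ref{trivial-fib} directly: once $\Delta=\Gamma_{\underline{n}}$ is forced, the domain $\prod_i\PHilb{n_i}$ equals the full preimage $\Psi_{\underline{n}}^{-1}(\KHilb{n,\underline{n}})$, and Corollary~\ref{crl:multiple-fibers} gives $[\KHilb{n,\underline{n}}]=\prod_i[\PHilb{n_i}]\cdot\Li^{-\sum_i|\{n_i+H\}\cap\Gamma_{\underline{n}}|}$ immediately. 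The determination of which $\underline{n}$ land in $M_n$ is handled separately by the snake-lemma computation of Theorem~\ref{crl:int-numbers-hilb}. So the route you should take is: establish $val_t(I)=\Gamma_{\underline{n}}$, then apply the fibration before the syzygy machinery enters, rather than trying to make the latter vanish.
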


The positivity (of the coefficients of $[\KHilb{n}]$ seen as a polynomial in $\Li$, see \cite[Proposition 6]{ors}), the topological invariance, and the relationship to the motivic class of the one-generator locus immediately follow from the theorem.

\begin{crl}
Let $(C,0)$ be a $(p,q)$-curve. Then, $[\KHilb{n}]$ is a polynomial in $\Li$ with positive coefficients for every $n,k \geq 1$. This polynomial is also a topological invariant of $(C,0)$ since $\Gamma_C$ is. 
\end{crl}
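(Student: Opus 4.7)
The plan is to deduce both claims directly from the formula in Theorem \ref{main-formula-pq}. Setting
$$e(\underline{n}) \coloneqq \sum_i |\{n_i+H\} \cap \Gamma_C| - \sum_i |\{n_i+H\} \cap \Gamma_{\underline{n}}|,$$
that formula expresses $[\KHilb{n}]$ as $\sum_{\underline{n} \in M_n} \Li^{e(\underline{n})}$, so both the polynomiality in $\Li$ and the positivity of its coefficients reduce to the single inequality $e(\underline{n}) \geq 0$ for every admissible $\underline{n} \in M_n$.

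To establish this inequality I would look back at the geometric meaning of each stratum in the proof of Theorem \ref{main-formula-pq}: the factor $\prod_i [\PHilb{n_i}]$ parametrizes ordered $k$-tuples of principal ideals with prescribed valuations $n_i$, while the negative-power correction $\Li^{-\sum_i|\{n_i+H\} \cap \Gamma_{\underline{n}}|}$ accounts for the redundancy of these tuples when each generator is modified by lower-order contributions from the others. Each stratum is then a locally closed subvariety of $\KHilb{n}$, so its exponent $e(\underline{n})$ coincides with its dimension and must be non-negative. A purely combinatorial check should also be possible, comparing $\{n_i + H\} \cap \Gamma_C$ with $\{n_i + H\} \cap \Gamma_{\underline{n}}$ and exploiting the inclusion $n_j + \Gamma_C \subseteq \Gamma_C$ together with the defining constraints of $M_n$.

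Topological invariance follows by inspection of the formula, which involves only the semigroup $\Gamma_C$ (through the holes $H$, the admissible tuples $M_n$, and the multi-set $\Gamma_{\underline{n}}$). Combining this with the classical theorem of Zariski, stating that $\Gamma_C$ is a topological invariant of the link of the unibranch plane singularity $(C,0)$ (equivalent to its Puiseux pairs), yields that $[\KHilb{n}]$ is also such an invariant.

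The main obstacle I anticipate is the inequality $e(\underline{n}) \geq 0$ itself: the multi-set structure of $\Gamma_{\underline{n}}$ allows a single value to be counted several times while its image in $\Gamma_C$ is counted only once, so a naive set-theoretic comparison fails. The cleanest route is therefore the stratification argument, reading off the dimension of each cell directly from the computation that produces Theorem \ref{main-formula-pq}, rather than attempting a term-by-term combinatorial inequality.
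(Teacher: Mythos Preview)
Your proposal is correct and matches the paper's approach exactly: the paper states this corollary without proof, treating it as an immediate consequence of the formula in Corollary \ref{crl-formula-monomial} together with the topological invariance of $\Gamma_C$ (citing \cite[Theorem 21]{Brieskorn}).

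One clarification that dissolves the obstacle you flag at the end. Despite the $\coprod$ notation, in this formula $\Gamma_{\underline{n}}$ is the \emph{set} $val_t(I)=\bigcup_i (n_i+\Gamma_C)$, not a multi-set; this is forced by Equation~(\ref{eq-s}) and by the comparison of Corollary \ref{crl:multiple-fibers} with Theorem \ref{trivial-fib}, where the same quantity appears as $S_i=\{n_i+H\}\cap\Delta$ for the semigroup ideal $\Delta=val_t(I)$. Since each $n_i\in\Gamma_C$ we have $n_i+\Gamma_C\subseteq\Gamma_C$, hence $\Gamma_{\underline{n}}\subseteq\Gamma_C$ as sets, and therefore
\[
\{n_i+H\}\cap\Gamma_{\underline{n}}\ \subseteq\ \{n_i+H\}\cap\Gamma_C
\]
for every $i$. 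Summing gives $e(\underline{n})\ge 0$ directly, so the ``naive set-theoretic comparison'' does not fail and you do not need to fall back on the stratification/dimension argument (though that argument is also valid, since each $\KHilb{n,\underline{n}}$ is an affine cell by the proof of Theorem \ref{trivial-fib}).
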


%%%%%%%%%%%%

Moreover, building on the work presented in \cite{ila-curv}, we aim to further develop the theory of motivic integration \textit{in relationship to} Hilbert schemes, 
and specifically express the classic motivic Igusa zeta function
$$Z_C(s) = \sum_{n=1}^\infty \mu(\Cont^n_C) \Li^{-ns}$$
defined in \cite{DL} in terms of motivic classes of principal Hilbert schemes.
Precisely, we want to understand the motivic measure $\mu(\Cont^n_C)$, defined in \cite{mea} and \cite{DL}, of the contact locus $\Cont^n_C \subseteq \Lo$ of a given unibranch plane curve $C$ in connection with $\PHilb{n}$.
This is accomplished in Section \ref{sec:contact-loci-principal}, where we construct a morphism 
$$\Pi: \pi_{n+1}(\Cont^n_C) \to \operatorname{Im}(\Pi) \subseteq \PHilb{n}$$
realizing pairs of (truncated) arcs as (intersections of the given curve $C$ with) unibranch plane curves. 
The morphism $\Pi$, unfortunately, is not surjective in general. However, the ``unibranch'' open component $$\UHilb^n_{0,1}(C) := \operatorname{Im}(\Pi) \subseteq \PHilb{n}$$
is enough to retrieve the coefficients of the Igusa zeta function $Z_C(s)$. 

To be more precise, the relationship between $\UHilb^n_{0,1}(C)$ and $Z_C(s)$ is obtained thanks to the explicit description of the fibers of $\Pi$, based on \cite{luengo} and the so-called space of branches presented therein. In the section we will see how the morphism $\Pi$ descends to a geometric morphism in the space of branches, directly leading to the description of $[\UHilb^n_{0,1}(C)]$ in terms of the (truncation of the) contact locus $[\pi_{n+1}(\Cont^{n})]$ of the given curve.
\begin{thm} \label{main-contactloci-principal}
Let $(C,0)$ be a unibranch curve. We have
    $$[\operatorname{Im}(\Pi_b^k)] 
     = \frac{\Li^{k}}{\Li^n(\Li-1)}  \frac{ [\pi_{n+1}(\Cont^{n,k})]}{[F^{n,k}]}  
   $$
for every $n,k \geq 1$ where
$$ \Cont^{n,k}_C = \Cont^n_C \cap \pi_k^{-1}(0) \setminus \Cont^n_C \cap \pi_{k+1}^{-1}(0) $$
is a subset of the contact locus $\Cont^n_C$ defined in Definition \ref{def:contact-locus}, $F^{n,k}$ is defined in Proposition \ref{prp-def-Fnk}
and $[\operatorname{Im}(\Pi_b^k)]$ is related to $[\UHilb_{0,1}^n(C)]$ in Proposition \ref{legame}.
\end{thm}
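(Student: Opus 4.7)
The plan is to realize $\Pi_b^k$ as a piecewise trivial fibration onto its image and apply the multiplicative property of motivic classes in the Grothendieck ring, so that $[\pi_{n+1}(\Cont^{n,k})]$ equals $[\operatorname{Im}(\Pi_b^k)]$ times the class of a fiber. Specifically, I would aim to exhibit the total fiber class as $\Li^{n-k}(\Li-1) \cdot [F^{n,k}]$; rearranging then yields
$$[\operatorname{Im}(\Pi_b^k)] = \frac{1}{\Li^{n-k}(\Li-1)} \frac{[\pi_{n+1}(\Cont^{n,k})]}{[F^{n,k}]} = \frac{\Li^k}{\Li^n(\Li-1)} \frac{[\pi_{n+1}(\Cont^{n,k})]}{[F^{n,k}]},$$
which is exactly the claim.

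First I would identify the factor $(\Li-1)\Li^{n-k}$ as coming from the descent to the space of branches. Following Luengo's construction, two arcs in $\Cont^{n,k}_C$ yield the same branch (and hence the same intersection ideal with $C$) precisely when they differ by a reparameterization $t \mapsto u(t)$ with $u(0)=0$ and $u'(0) \neq 0$. Restricted to arcs truncated at level $n+1$ and vanishing to order exactly $k$ at the origin, the relevant truncated reparameterization group has motivic class $(\Li-1)\Li^{n-k}$: the factor $\Li-1$ records the leading unit $u'(0) \in \C^*$, while $\Li^{n-k}$ records the higher-order coefficients of $u$ that act nontrivially on the $(n+1)$-truncation of an arc starting at order $k$. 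This exhibits $\pi_{n+1}(\Cont^{n,k})$ as a principal bundle over its image in the space of branches with fiber class $(\Li-1)\Li^{n-k}$.

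Second I would analyze the residual morphism $\Pi_b^k$ from the space of branches to $\operatorname{Im}(\Pi_b^k) \subseteq \UHilb_{0,1}^n(C)$. Even after descending to branches, different branches may cut out the same ideal on $C$; by the explicit description in Proposition \ref{prp-def-Fnk}, these residual fibers have constant motivic class $[F^{n,k}]$ across $\operatorname{Im}(\Pi_b^k)$. Composing the two steps via the piecewise trivial fibration identity $[E]=[B]\cdot[F]$ in the Grothendieck ring delivers the formula after division by $[F^{n,k}]$ and by the reparameterization factor.

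The main obstacle will be justifying piecewise triviality at each stage and pinning down the motivic class of the truncated reparameterization group. The interaction between order-$k$ vanishing and truncation at level $n+1$ must be handled carefully, since only those higher coefficients of $u$ that survive modulo $t^{n+1}$ after composition with an arc of order $k$ actually affect the image. I would address this by writing explicit coordinates on $\pi_{n+1}(\Cont^{n,k})$ and on the truncated reparameterization group, and then verifying by a stratification argument that both the quotient map to the space of branches and the further projection $\Pi_b^k$ are Zariski-locally trivial on each stratum, so that multiplicativity of motivic classes applies and the announced cancellation of $\Li$-powers goes through.
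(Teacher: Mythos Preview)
Your proposal is correct and follows essentially the same two-step factorization as the paper: first descend from $\pi_{n+1}(\Cont^{n,k})$ to the branch space $\pi_{n+1}(\mathcal{B}^{n,k})$ picking up the reparameterization factor $(\Li-1)\Li^{n-k}$, then use that $\Pi_b^k$ has constant fiber $[F^{n,k}]$ over its image. The only minor difference is that the paper imports the first factor directly from Luengo's computation of orbit measures (Equation~\eqref{indep} and~\eqref{prp2}) rather than counting effective reparameterization coefficients by hand as you do; your coefficient count $a_1\in\C^*$ and $a_2,\dots,a_{n-k+1}\in\C$ is exactly what underlies that citation.
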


As a consequence, combining each (truncation of) contact locus in their generating series, we obtain the classic Igusa zeta function $Z_C(s)$, and the coefficients of the series are now related to Hilbert schemes.

\begin{crl}
Let $(C,0)$ be a unibranch curve and let $Z_C(s)$ 
be the Igusa zeta function of $C$. Then, we have
$$Z_C(s)= (\Li-1) \Li^{-2}\sum_{n \geq 1}\Li^{-n(s+1)}\sum_k \Li^{-k}[\operatorname{Im}(\Pi_b^k)] [F^{n,k}]$$
where $\Pi_b$ denotes the $S_0^{n+1}$-quotient of the map $\Pi$.%and $[\operatorname{Im}(\Pi_b^k)]$ is related to $[\UHilb_{0,1}^n(C)]$ in Proposition \ref{legame}.  
\end{crl}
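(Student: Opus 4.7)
The plan is to start from the definition
$$Z_C(s) = \sum_{n \geq 1} \mu(\Cont^n_C) \Li^{-ns}$$
and rewrite the motivic measure $\mu(\Cont^n_C)$ so that the classes $[\operatorname{Im}(\Pi_b^k)]$ and $[F^{n,k}]$ produced by Theorem~\ref{main-contactloci-principal} appear explicitly. First I would decompose the contact locus according to the vanishing order at the origin, $\Cont^n_C = \bigsqcup_k \Cont^{n,k}_C$, using the pieces $\Cont^{n,k}_C = \Cont^n_C \cap \pi_k^{-1}(0) \setminus \Cont^n_C \cap \pi_{k+1}^{-1}(0)$ recalled in the theorem. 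Since the multiplicity of $C$ at $0$ bounds $k$ in terms of $n$, the decomposition is finite and so $\mu(\Cont^n_C) = \sum_k \mu(\Cont^{n,k}_C)$ by additivity of the Denef--Loeser measure.

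Next I would express $\mu(\Cont^{n,k}_C)$ through its finite-level truncation. Since $\Cont^{n,k}_C$ is cut out by conditions on the $(n+1)$-jet of the arc, it is a cylinder stable at level $n+1$ inside $\mathbb A^2_\infty$, and the standard formula for the measure of a cylinder in a smooth surface gives
$$\mu(\Cont^{n,k}_C) = [\pi_{n+1}(\Cont^{n,k}_C)] \, \Li^{-2(n+1)}.$$
Inverting the identity stated in Theorem~\ref{main-contactloci-principal} yields
$$[\pi_{n+1}(\Cont^{n,k}_C)] = \Li^n (\Li-1)\, \Li^{-k}\, [\operatorname{Im}(\Pi_b^k)]\, [F^{n,k}],$$
and substituting gives $\mu(\Cont^{n,k}_C) = (\Li-1)\Li^{-n-2}\Li^{-k}[\operatorname{Im}(\Pi_b^k)][F^{n,k}]$. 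Inserting this into the defining series of $Z_C(s)$ and combining exponents as $-ns-n-2 = -2-n(s+1)$ factors out the prefactor $(\Li-1)\Li^{-2}$ and produces the claimed identity.

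The main obstacle, such as it is, lies in verifying that $\Cont^{n,k}_C$ really is a cylinder stable at the asserted level $n+1$, so that the cylinder measure formula applies with exponent $-2(n+1)$. This reduces, however, to the observation that both defining conditions---contact order exactly $n$ with $C$ and order of vanishing exactly $k$ at the origin---depend only on the $(n+1)$-jet of the arc in $\mathbb A^2$; once this is in place, the remaining work is purely the bookkeeping of the powers of $\Li$ carried out above.
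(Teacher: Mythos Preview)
Your proposal is correct and matches the paper's own derivation: the corollary is obtained immediately from the identity $[\pi_{n+1}(\Cont^{n,k})] = \Li^{n-k}(\Li-1)[\operatorname{Im}(\Pi_b^k)][F^{n,k}]$ of Corollary~\ref{crrr1} (equivalently, the inverted form of Theorem~\ref{main-contactloci-principal}) plugged into the definition $Z_C(s)=\sum_{n\geq 1}[\pi_{n+1}(\Cont^n)]\Li^{-2(n+1)}\Li^{-ns}$ after splitting $\Cont^n$ into the pieces $\Cont^{n,k}$. The paper does not spell out the bookkeeping you provide, but the route is the same.
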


In the same section, we also observe that the components $\Cont^{n,k}$ in the $n$-th contact locus and their motivic measure are completely determined by an embedded resolution of $(C,0)$. This leads to an explicit relationship of $[\UHilb^n_{0,1}(C)]$ with an embedded resolution, hence becoming an interesting topological polynomial invariant.
\begin{crl}
Let $(C,0)$ be a unibranch curve. We have
$$[\operatorname{Im}(\Pi_b^k)] = \frac{1}{(\Li-1)\Li^{n}} \frac{\Li^k}{[F^{n,k}]}
\sum_{\substack{J \subseteq I, J \neq \emptyset, j \in J \\ k_j \geq 1 \, | \, k =\sum m_j k_j}}
(\Li-1)^{|J|-1}[E_J^o]
\left( \sum_{j \in J, \, \sum k_jN_j = n}
\Li^{-\sum_{j \in J} k_j \nu_j} \right)$$
for every $n, k \geq 1$ where $m_j$ is defined in \cite[Definition 3.21]{ila-curv}. %and $[\operatorname{Im}(\Pi_b^k)]$ is related to $[\UHilb_{0,1}^n(C)]$ in Proposition \ref{legame}. 
\end{crl}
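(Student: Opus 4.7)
The plan is to reduce the corollary to Theorem \ref{main-contactloci-principal} and then perform the motivic measure computation of $[\pi_{n+1}(\Cont^{n,k})]$ via an embedded resolution, using the Denef--Loeser change-of-variables formula. Theorem \ref{main-contactloci-principal} already expresses
$$[\operatorname{Im}(\Pi_b^k)] = \frac{\Li^{k}}{\Li^n(\Li-1)} \frac{[\pi_{n+1}(\Cont^{n,k})]}{[F^{n,k}]},$$
so the entire task is to show that
$$[\pi_{n+1}(\Cont^{n,k})] = \sum_{\substack{J \subseteq I, J \neq \emptyset \\ k_j \geq 1, \; k = \sum m_j k_j}} (\Li-1)^{|J|-1}[E_J^o] \sum_{\sum_{j \in J} k_j N_j = n} \Li^{-\sum_{j \in J}k_j\nu_j}.$$

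First, I would fix an embedded resolution $h: Y \to \mathbb{A}^2$ of $(C,0)$ with exceptional components $\{E_i\}_{i \in I}$, letting $N_j$ denote the multiplicity of $h^\ast C$ along $E_j$ and $\nu_j$ the relative canonical multiplicity (so that $\nu_j + 1$ is the coefficient of $E_j$ in $K_{Y/\mathbb{A}^2}$, matching the conventions of \cite{DL}). Writing $E_J^o = \bigcap_{j \in J} E_j \setminus \bigcup_{i \notin J} E_i$, the standard stratification of the arc space of $Y$ by contact orders along the $E_j$ gives, via the change-of-variables formula applied to $h$, the identity
$$\mu(\Cont^{n,k}_C) = \sum_{J,(k_j)} (\Li-1)^{|J|-1}[E_J^o]\,\Li^{-\sum_j k_j(\nu_j+1)},$$
where the sum runs over the data compatible with being in the $n$-th contact locus (i.e.\ $\sum k_j N_j = n$) and with the extra $k$-stratification (i.e.\ $k = \sum m_j k_j$, using the definition of $m_j$ from \cite[Definition 3.21]{ila-curv}). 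This is essentially the observation already stated in the excerpt that the components $\Cont^{n,k}$ are determined by the resolution; I would spell out how the strata on the source side split according to the value of $k$, using $m_j$ to measure how the marked point ($\pi_k^{-1}(0)$ condition) interacts with the pullback along $E_j$.

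Next, I would pass from the motivic measure $\mu(\Cont^{n,k}_C)$ to the class $[\pi_{n+1}(\Cont^{n,k})]$. Since $\Cont^{n,k}$ is stable of level $n$ in the sense of \cite{DL}, its measure satisfies
$$\mu(\Cont^{n,k}_C) = [\pi_{n+1}(\Cont^{n,k})]\,\Li^{-2(n+2)}$$
(up to the usual shift by the dimension of $\mathbb{A}^2$ and the truncation level). Absorbing the resulting $\Li$-power into the exponents on the right-hand side converts the $\Li^{-\sum k_j(\nu_j+1)}$ factors into $\Li^{-\sum k_j\nu_j}$, matching the claimed formula.

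Finally, substituting this expression for $[\pi_{n+1}(\Cont^{n,k})]$ back into Theorem \ref{main-contactloci-principal} yields the corollary. The main obstacle I anticipate is the bookkeeping around the $k$-stratification: the ordinary Denef--Loeser formula organizes arcs only by contact with $C$, so one must check carefully that the refined stratum $\Cont^{n,k}$ intersects each cylinder over $E_J^o$ in precisely the locus where $k = \sum m_j k_j$, and that no spurious contributions from the strict transform of $C$ creep in. Getting the index $m_j$ to play the correct role, consistent with \cite[Definition 3.21]{ila-curv}, is the delicate point; once this compatibility is established, the rest is a direct substitution.
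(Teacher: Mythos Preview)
Your approach is essentially the same as the paper's: the corollary is stated without a detailed proof, the paper simply notes that it follows by combining Corollary \ref{crrr1} (equivalently Theorem \ref{main-contactloci-principal}) with Theorem \ref{denef-contact} and the refinement \cite[Proposition 3.24]{ila-curv}, which is precisely the $k$-stratified Denef formula you are proposing to establish. The ``main obstacle'' you single out---showing that the cylinder over $E_J^o$ meets $\Cont^{n,k}$ exactly where $k=\sum_j m_jk_j$---is exactly the content of that cited external proposition, so you have correctly located the only nontrivial step.

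One minor numerical slip: with the paper's conventions (Definition \ref{def:contact-locus}) the measure is $\mu(\Cont^{n,k}_C)=[\pi_{n+1}(\Cont^{n,k})]\,\Li^{-2(n+1)}$, not $\Li^{-2(n+2)}$. This does not affect the argument, but you should correct it so the powers of $\Li$ match when you substitute back.
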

\begin{crl}
Let $(C,0)$ be a unibranch curve. Then, $[\UHilb^n_{0,1}(C)]$ is a polynomial in $\Li$ for every $n \geq 1$. This polynomial is also a topological invariant of $(C,0)$ since the resolution is. 
\end{crl}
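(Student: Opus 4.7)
The plan is to deduce both assertions directly from the preceding corollary, which already expresses $[\operatorname{Im}(\Pi_b^k)]$ as a finite sum of terms built from a fixed embedded resolution of $(C,0)$. First I would use Proposition \ref{legame} to write $[\UHilb_{0,1}^n(C)]$ as a finite combination of the classes $[\operatorname{Im}(\Pi_b^k)]$ for $k$ in the appropriate range, and then substitute the explicit resolution formula into each summand.

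The next step is to verify that every building block of the resulting expression lies in $\mathbb{Z}[\Li]$. Since $(C,0)$ is a unibranch plane curve singularity, an embedded resolution produces a simple normal crossings divisor whose components are smooth rational curves meeting transversally in a tree. Hence the locally closed strata $E_J^o$ are empty for $|J|\geq 3$, finite sets of points for $|J|=2$, and copies of $\mathbb{P}^1$ with finitely many points removed for $|J|=1$; in every case $[E_J^o]\in\mathbb{Z}[\Li]$. The integers $N_j,\nu_j,m_j$ only contribute explicit powers of $\Li$, and by Proposition \ref{prp-def-Fnk} the class $[F^{n,k}]$ is itself a power of $\Li$, so $[F^{n,k}]^{-1}$ contributes another power of $\Li$.

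The main obstacle is to show that the prefactor $(\Li-1)^{-1}$ in the formula actually cancels. For $|J|\geq 2$ this is immediate from the companion factor $(\Li-1)^{|J|-1}$, but the $|J|=1$ strata require more care. I would argue this in two complementary ways: directly, by grouping the terms indexed by a common $j\in I$ and using the constraint $\sum k_jN_j = n$ to rewrite the inner geometric sum so as to exhibit a factor of $\Li-1$; and conceptually, by noting that $[\UHilb_{0,1}^n(C)]$ is a priori the class of a constructible subvariety in $K_0(\operatorname{Var}_{\C})$, so any rational expression representing it whose denominators are powers of $\Li$ and $\Li-1$ must in fact have no $(\Li-1)$ pole. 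Combined with the control on the $\Li$-denominators from the previous step, this upgrades the expression to a genuine element of $\mathbb{Z}[\Li]$.

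Finally, the topological invariance statement follows immediately: the combinatorial data of an embedded resolution of a unibranch plane curve singularity (the dual tree, the multiplicities $N_j$, the discrepancies $\nu_j$, the integers $m_j$, and the rational classes $[E_J^o]$) are all determined by the topological type of $(C,0)$, for instance through the Eisenbud--Neumann splice diagram or equivalently the characteristic exponents. Since the polynomial just obtained is a universal expression in these invariants, it depends only on the topological type of the germ $(C,0)$.
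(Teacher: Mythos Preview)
Your overall strategy matches the paper's: reduce via Proposition~\ref{legame} and Remark~\ref{leg} to the classes $[\operatorname{Im}(\Pi_b^k)]$, plug in the resolution formula of Corollary~\ref{c}, observe that each $[E_J^o]$ is either a point or a $\mathbb{P}^1$ with finitely many points removed and that $[F^{n,k}]$ is a power of $\Li$, and then invoke the topological invariance of the resolution data. On all of these points you and the paper agree.

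You are in fact more careful than the paper in one respect: you flag the prefactor $(\Li-1)^{-1}$ in Corollary~\ref{c}, which the paper's proof does not address. However, your direct argument (a) for its cancellation does not work as written. For a singleton $J=\{j\}$ the two constraints $k_jN_j=n$ and $k_jm_j=k$ determine at most one value of $k_j$, so the inner ``sum'' is a single monomial $\Li^{-k_j\nu_j}$, not a geometric series, and no factor of $(\Li-1)$ is produced by grouping. Your conceptual argument (b) is the correct route, but it needs to be made precise: the mere fact that $[\operatorname{Im}(\Pi_b^k)]$ lies in $K_0(\operatorname{Var}_\C)$ does not by itself rule out a $(\Li-1)$-pole in a rational expression, since $K_0(\operatorname{Var}_\C)$ is far larger than $\mathbb{Z}[\Li]$. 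What does work is to pass through a motivic realization such as the $E$-polynomial or the virtual Poincar\'e polynomial: its value on the class of a constructible set is a genuine polynomial, which forces the rational function of $\Li$ appearing on the right-hand side to have no pole at $\Li=1$ (and no negative powers of $\Li$), hence to lie in $\mathbb{Z}[\Li]$. Once you spell this out, the argument is complete.
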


Finally, we also prove the existence of a threshold from which the open $\UHilb^n_{0,1}(C)$ is non-empty, making it 
an interesting open to study not only in relationship to the Igusa zeta function.

\begin{thm} \label{main-unibranch-reps}
    Let $(C,0)$ be a unibranch plane curve and let $\rho: (Y, E) \rightarrow (\mathbb A^2, C)$ be an embedded resolution of the pair $(\mathbb A^2, C)$ as in Definition \ref{res}.
    Let $N_i$ denote the multiplicity of $E_i$ as in Definition \ref{mult} and let $N = \operatorname{max}_i N_i$.  
    Then, $$\UHilb^n_{0,1}(C) \neq \emptyset$$ for all $n \geq N$.   
\end{thm}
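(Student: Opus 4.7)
The plan is to exhibit, for every $n \geq N$, a unibranch formal plane curve $D$ at $0$ with intersection multiplicity $(C,D)_0 = n$; such a $D$ automatically lies in the image of $\Pi$ (via its parametrization as an arc), hence produces a point of $\mathrm{UHilb}^n_{0,1}(C) = \operatorname{Im}(\Pi)$.

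To build such a $D$, I work on the embedded resolution $\rho: (Y, E) \to (\mathbb{A}^2, C)$. Since $C$ is unibranch, the strict transform $\tilde C$ is a smooth irreducible curve meeting the exceptional divisor transversely at a single point $p_0$. Let $E_{i_0}$ denote the unique exceptional component through $p_0$, with multiplicity $N_{i_0} \leq N$ by definition of $N$. I choose formal coordinates $(u,v)$ at $p_0$ with $E_{i_0} = \{u = 0\}$ and $\tilde C = \{v = 0\}$, so that
\[
\rho^* f_C = u^{N_{i_0}} v \cdot h(u,v)
\]
for some unit $h$.

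For every $n > N_{i_0}$, I set $\tilde\phi(s) = (s,\, s^{n - N_{i_0}})$ at $p_0$, and compute directly
\[
\mathrm{ord}_s(\tilde\phi^* \rho^* f_C) = 1\cdot N_{i_0} + 1\cdot(n - N_{i_0}) = n.
\]
For the boundary case $n = N_{i_0}$ I instead take an arc transverse to $E_{i_0}$ at a smooth interior point (avoiding $\tilde C$ and all other exceptional components), yielding contact order exactly $N_{i_0}$. The image $\phi = \rho \circ \tilde\phi$ is then an arc in $\mathbb{A}^2$ with contact order exactly $n$ with $C$, and these two constructions together cover every integer $n \geq N_{i_0}$, in particular every $n \geq N$.

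The main obstacle is verifying that the image $\phi$ really parametrizes a \emph{unibranch} plane curve, i.e.\ that it is a primitive parametrization not factoring as $\phi(s) = \psi(s^m)$ for any $m \geq 2$. For $n > N_{i_0}$ this follows from the coprimality $\gcd(1, n - N_{i_0}) = 1$ of the exponents in $\tilde\phi$ combined with the fact that the bivariate expansions of $\rho^* x$ and $\rho^* y$ at $p_0$ contribute terms of every positive integer order in $s$ after specialization along $\tilde\phi$, so no factorization through $s \mapsto s^m$ can occur. For $n = N_{i_0}$ the primitivity reduces to the coprimality of the coordinate orders $(\mathrm{ord}_{E_{i_0}} x,\,\mathrm{ord}_{E_{i_0}} y)$ along the last exceptional divisor, a classical property of the Puiseux (or Enriques) data of the resolution of a unibranch singularity. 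Once primitivity is established, Proposition \ref{legame} and the construction of $\Pi$ identify the principal ideal $(f_D) \subseteq \ring$ obtained from $\phi$ as an element of $\mathrm{UHilb}^n_{0,1}(C)$, completing the argument.
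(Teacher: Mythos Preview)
Your construction is geometrically the same as the paper's: both exploit arcs on the resolution that meet the exceptional divisor $E_{i_0}$ (the one intersecting the strict transform $\tilde C$) with prescribed tangency to $\tilde C$, so that the contact order downstairs equals $N_{i_0}\cdot 1 + 1\cdot(n-N_{i_0})=n$. The paper simply packages this via Denef's formula (Theorem~\ref{denef-contact}): it notes that the stratum $E_{\{i_0\}}^o$ contributes $n=N$ and the stratum $E_{i_0}\cap\tilde C$ contributes every $n=hN+e$ with $h,e\ge1$, hence $\Cont^n\neq\emptyset$ for all $n\ge N$, and then reads off $\UHilb^n_{0,1}(C)=\operatorname{Im}(\Pi)\neq\emptyset$ directly. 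The paper also uses (without proof) the classical fact that for a unibranch plane curve the component meeting $\tilde C$ is the one of maximal multiplicity, i.e.\ $N_{i_0}=N$; you avoid this by only needing $N_{i_0}\le N$, which is automatic.

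Where your write-up has a genuine gap is the primitivity step. The assertion that ``the bivariate expansions of $\rho^*x$ and $\rho^*y$ at $p_0$ contribute terms of every positive integer order in $s$'' is not justified and need not be literally true: it depends on which monomials actually appear in those expansions. The correct (and much cleaner) argument is to use the valuative criterion for the proper birational map $\rho$. Your arc $\tilde\phi(s)=(s,s^{\,n-N_{i_0}})$ on $Y$ is visibly primitive (its first local coordinate is $s$). If $\phi=\rho\circ\tilde\phi$ factored as $\psi(s^m)$ with $m\ge2$, then the unique lift $\tilde\psi$ of $\psi$ through $\rho$ would satisfy $\tilde\phi(s)=\tilde\psi(s^m)$, contradicting primitivity of $\tilde\phi$. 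The same lifting argument handles your transversal arc in the case $n=N_{i_0}$, so you do not need the separate appeal to coprimality of $(\operatorname{ord}_{E_{i_0}}x,\operatorname{ord}_{E_{i_0}}y)$. With this fix your argument is complete and in fact slightly sharper than the paper's, since you explicitly verify non-degeneracy rather than relying on the measure-zero remark (Remark~\ref{rmk-degenerate-par}) implicit in the paper's reduction to $\Cont^n\neq\emptyset$.
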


%%%%%%%%%%%%%%%

\vspace{4.5mm}
This paper is organized as follows. We recall the main notions and results of the theory of motivic integration, unibranch plane curve singularities, and Hilbert schemes in Section \ref{sec:background}.
Then, the first part of Section \ref{sec:motivic-classes} is dedicated to constructing a partition of the Hilbert scheme of a given unibranch plane curve with a fixed number of generators, determined by $t$-adic valuations of the generators in $\ring$ and semigroup ideals.
Later, we study the relationship of motivic classes of fixed-generators Hilbert schemes to certain determinantal subvarieties to prove Theorem \ref{main-formula unibranch}. 
In Section \ref{sec:pq-curves} we then specialize the results of Section \ref{sec:motivic-classes} to the case of $(p,q)$-curves, and factoring in some specific properties of these simpler plane curves we obtain Theorem \ref{main-formula-pq}.
Finally, the first part of Section \ref{sec:contact-loci-principal} is devoted to studying the existence of irreducible representatives for points of principal Hilbert schemes of a given unibranch plane curve, leading to Theorem \ref{main-unibranch-reps}. We conclude by relating the motivic classes of unibranch opens of the one-generator locus to the motivic measure of contact loci of the given unibranch curve in Theorem \ref{main-contactloci-principal}.

%%%%%%%%%%%%%%%

\vspace{4.5mm}
\indent
\textbf{Acknowledgements.} I would like to deeply thank Dimitri Wyss for his advice and supervision during this paper's discussion and writing stages. Moreover, we would like to thank Alexei Oblomkov for his valuable suggestions and insight, and Gergely Bérczi for the helpful discussions.
This work was supported by the Swiss National Science Foundation [No. 196960].

\section{Background} \label{sec:background}

We denote by $\C$ an algebraically closed field of characteristic zero. We denote by $\operatorname{Sch}_\C$ the category of Noetherian schemes over $\C$ and by $\operatorname{Var}_\C$ the category of varieties i.e. separated integral schemes of finite type over the same field $\C$. Throughout this paper, when referring in general to a scheme or a variety we imply objects of these categories and over $\C$.
Moreover, by plane curve or simply curve, we will always mean an hypersurface in $\mathbb A^2_\C = \mathbb A^2$, that we will assume to be reduced and irreducible throughout this paper. 

\subsection{Motivic measure and contact loci}
In this section, we recall some basic facts about the construction of the motivic measure and introduce the notion of contact loci. We refer to \cite{greenbook} or \cite{DL} as references on the subject.

We start with the fundamental notions of $k$-jet scheme and arc scheme, and their truncation maps, which will have a central role in the definition of motivic integrals. 

\begin{df} \label{def-the jet scheme}
Let $X, Y$ be schemes. For any $k > 0$ we denote by $X_k(Y)$ the set 
$$X_k(Y) = \operatorname{Hom}_{\operatorname{Sch}_\C}(Y \times_\C \operatorname{Spec}(\mathbb \C[t] /(t^k)) , X)$$
and the $k$-jet scheme is defined as the scheme representing the corresponding functor $ X_k : \operatorname{Sch}_\C^{op} \to \operatorname{Set} $ defined on objects by $ Y \mapsto X_k(Y)$.
\end{df}

This definition relies on the representability result of the functor $X_k$, which one can find in \cite[Proposition 2.1.3]{greenbook}. The scheme representing the functor $X_k$ will also be denoted by $X_k$ and it will be called the $k$-jet scheme, and we call $k$-jet a point of $X_k$. The $\C$-points of $X_k$ are given by $X_k(\C) = \operatorname{Hom}_{\operatorname{Sch}_\C}(\operatorname{Spec}(\C[t] /(t^k)) , X)$ and correspond to $\C[t] / (t^k)$-points of $X$.

\begin{df}
Let $l \geq r >0$ be two integers and consider the map $\pi^l_r : \mathbb K[t] / (t^l) \to \mathbb K[t] / (t^r)$ of reduction modulo $t^r$. We denote also by $\pi^l_r: X_l \to X_r$ the induced morphism of schemes.
\end{df}

Following \cite[Section 3.3.1, Corollary 3.3.7]{greenbook} the truncation morphisms $(\rho^{k+1}_k)_{k \geq 0}$ form a projective system which admits a limit in the category of functors over $\operatorname{Sch}_\C$. The limit functor is again representable and the scheme representing it can be taken as the definition of the arc scheme. In particular, we summarize the key points of the arc scheme construction in the next definition.

\begin{df} \label{def-the arc scheme}
The arc scheme is defined as the scheme representing the limit functor $\varprojlim X_k = X_\infty : \operatorname{Sch}_\C^{op} \to \operatorname{Set}$. In particular, the functor on objects is defined as $$X_\infty(Y) = \operatorname{Hom}_{\operatorname{Sch}_\C}(Y \times_\C \operatorname{Spec}(\mathbb \C[[t]]) , X).$$
\end{df}

The scheme representing the functor $X_\infty$ will also be denoted by $X_\infty$, and we call arc a point of $X_\infty$. 
We observe that in this limit case as well the $\C$-points of $X_\infty$ are given by $X_\infty(\C) = \operatorname{Hom}_{\operatorname{Sch}_\C}(\operatorname{Spec}(\C[[t]] ) , X)$ and correspond to $\C[[t]]$-points of $X$ as explained in \cite[Remark 3.3.9]{greenbook}.

\begin{df} \label{tr-arcs}
Let $ r >0$ be an integer and consider the map $\pi_r : \mathbb K[[t]] \to \mathbb K[t] / (t^r)$ of reduction modulo $t^r$. We denote also by $\pi_r: X_\infty \to X_r$ the induced morphism of schemes.
\end{df}

\begin{ex} \label{the jet scheme-aff}
We present the example of the affine space and more in general affine varieties since this ultimately is our local model of varieties.
Let $X = \mathbb A^n$ with coordinates $x_1, \dots , x_n$. We have that a morphism $ \phi : \C[x_1, \dots , x_n] \rightarrow \C[t] / (t^k)$ corresponds to its images $\phi(x_i)=a^i \in \C[t] / (t^k)$, each given by a $k$-tuple of coefficients $(a^i_0, \dots, a^i_{k-1}) \in \mathbb \C^k$.
Therefore, we have that 
$$ (\mathbb A^n)_k = (\mathbb A^k)^n = \mathbb A^{nk}.$$
Moreover, considering $Y \subseteq X$ an affine subvariety determined by $I=(f_1, \dots , f_r)$ its $k$-jet scheme is then the affine subscheme of $(\mathbb A^n)_k(\C)$ determined by the conditions $f_j(a^i) = 0$ modulo $t^k$ for $j= 1, \dots , r$.
\end{ex}

Next, we turn our attention to the notion of motivic classes of varieties. These classes are fundamental to defining the motivic measure introduced by Denef and Loeser in \cite{mea} for varieties over $\C$ since it takes value into the ring of motivic classes of varieties.

\begin{df}
The Grothendieck ring of varieties is defined as the quotient of the free abelian group generated on the set of isomorphism classes of varieties over $\C$ modulo the subgroup generated by the relations
    \begin{center}
    $[X] = [Z] + [X-Z]$ for any $X \in \operatorname{Var}_\C$ and $Z \subseteq X$ Zariski closed  
    \end{center}
and then equipped by the multiplication defined as
    \begin{center}
    $[X] [Y] = [X \times Y]$ for any $X,Y \in \operatorname{Var}_\C$.    
    \end{center}
The elements of this ring are called motivic classes and we denote the ring by $\mathbf{K_0}(\operatorname{Var}_\C)$ and by $\Li = [\mathbb{A}^1]$ the motivic class of the affine line.
\end{df}

We recall the following well-known and useful facts, from \cite[Proposition 5]{mot} and \cite[Lemma 2.9]{brid}.
\begin{prp} \label{fib}
Let $X,Y $ be varieties and let $f: X \to Y$ be a fibration of fiber $F$ which is locally trivial for the Zariski
topology of $Y$. Then, $[X] = [F][Y]$.
\end{prp}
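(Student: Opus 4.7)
The plan is to leverage the Zariski-local triviality of $f$ to decompose $Y$ into finitely many locally closed subvarieties over each of which $f$ becomes a product projection, and then to invoke the scissor and multiplicativity relations defining $\mathbf{K_0}(\operatorname{Var}_\C)$.

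First, I would pick an open cover $\{U_i\}_i$ of $Y$ together with isomorphisms $f^{-1}(U_i) \cong U_i \times F$ compatible with the projections, and, using that $Y$ is Noetherian, reduce to a finite subcover $U_1, \dots, U_n$. Next, I would refine this finite cover into a partition of $Y$ into the locally closed subvarieties $Y_j := U_j \setminus (U_1 \cup \dots \cup U_{j-1})$, on each of which the restriction of $f$ remains trivial, so that $f^{-1}(Y_j) \cong Y_j \times F$. Applying the additivity (scissor) relation along the induced decomposition of $X$ by the $f^{-1}(Y_j)$, together with multiplicativity on products, gives
$$[X] \;=\; \sum_j [f^{-1}(Y_j)] \;=\; \sum_j [Y_j \times F] \;=\; [F] \sum_j [Y_j] \;=\; [F][Y],$$
where the last equality is the additivity relation applied to the decomposition of $Y$ itself.

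The only point of care, though a routine one, is that the pieces $Y_j$ and $f^{-1}(Y_j)$ are merely locally closed rather than closed subvarieties; their classes are nevertheless well-defined in $\mathbf{K_0}(\operatorname{Var}_\C)$ via the identity $[Y_j] = [\overline{Y_j}] - [\overline{Y_j} \setminus Y_j]$, and a short induction on $n$ confirms that the scissor relations telescope correctly along the partition. Beyond this bookkeeping I do not expect any substantial obstacle, since the statement rests essentially on the definitions: Zariski-local triviality is exactly strong enough to make the product relation in the Grothendieck ring applicable stratum by stratum.
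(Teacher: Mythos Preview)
Your argument is correct and is the standard proof of this fact: stratify $Y$ into finitely many locally closed pieces over which the fibration is trivial, then apply the scissor and product relations. The paper itself does not prove this proposition; it simply recalls it as a well-known fact with a reference to \cite[Proposition 5]{mot}, so there is no in-paper proof to compare against, but your approach is exactly the one found in the cited literature.
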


\begin{prp} \label{geom}
Let $X,Y$ be varieties and let $f: X \to Y$ be a geometric bijection, that is a morphism which is also a bijection $f(\C) : X(\C) \to Y(\C)$ of $\C$-points. Then, $[X] = [Y]$.
\end{prp}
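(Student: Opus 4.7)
The plan is to reduce to a setting where $f$ becomes an isomorphism on a dense open subset of $Y$, and then proceed by Noetherian induction on $\dim Y$. First I would upgrade the bijectivity on $\C$-points to scheme-theoretic surjectivity: by Chevalley's theorem the image $f(X)\subseteq Y$ is a constructible subset, and any constructible subset of a variety that contains every closed point must coincide with the whole variety.

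Next I would show that $f$ is birational. Since $X$ and $Y$ are integral and $f$ is dominant, we obtain an extension of function fields $K(Y)\hookrightarrow K(X)$. In characteristic zero, generic smoothness yields a dense open $U_0\subseteq Y$ over which $f$ is smooth with fibers of dimension $\dim X - \dim Y$. If $\dim X>\dim Y$, a fiber over any $\C$-point of $U_0$ would contain infinitely many $\C$-points, contradicting injectivity; and if $[K(X):K(Y)]=d>1$, a generic fiber would consist of $d$ distinct reduced $\C$-points, again contradicting injectivity. Hence $\dim X=\dim Y$ and $f$ is birational.

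A birational morphism of irreducible varieties restricts to an isomorphism on a dense open $V\subseteq X$ with image an open $U\subseteq Y$. Injectivity of $f$ on $\C$-points forces $f^{-1}(U)=V$ as open subsets of $X$, since every $\C$-point of $f^{-1}(U)$ has the same image as some $\C$-point of $V$. Therefore $[f^{-1}(U)]=[U]$ in $\mathbf{K_0}(\operatorname{Var}_\C)$, and the scissor relation rewrites
\[ [X]-[Y] = [X\setminus f^{-1}(U)] - [Y\setminus U], \]
while the restriction $f: X\setminus f^{-1}(U)\to Y\setminus U$ remains a geometric bijection.

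The main obstacle is the inductive step, because the complements are in general neither irreducible nor reduced, so they do not directly belong to $\operatorname{Var}_\C$ as defined in the paper. One therefore has to stratify $X\setminus f^{-1}(U)$ and $Y\setminus U$ into their reduced irreducible components, pair them up via $f$ (again using bijectivity on $\C$-points to match components of equal dimension), and apply the inductive hypothesis to each pair of components. Since $\dim(Y\setminus U)<\dim Y$, the recursion terminates, yielding $[X]=[Y]$.
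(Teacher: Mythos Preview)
The paper does not prove this proposition; it is simply quoted from \cite[Lemma 2.9]{brid}. Your proposal is the standard argument and is essentially correct, so there is nothing to compare against in the paper itself. Two small points are worth tightening.

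First, generic smoothness as usually stated requires the source to be smooth over $\C$, and $X$ may be singular. Either restrict first to the dense open smooth locus $X_{\mathrm{sm}}$ (which is dense since $\operatorname{char}\C=0$), or bypass smoothness entirely: upper semicontinuity of fibre dimension for a dominant morphism of irreducible varieties gives a dense open $U_0\subseteq Y$ over which every fibre has dimension $\dim X-\dim Y$, forcing $\dim X=\dim Y$; then $K(X)/K(Y)$ is finite separable (char $0$), hence $f$ is generically \'etale of degree $d=[K(X):K(Y)]$, and the fibre over a general $\C$-point has exactly $d$ points.

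Second, in the inductive step you cannot in general ``pair up'' irreducible components of $X\setminus f^{-1}(U)$ and $Y\setminus U$: their numbers need not agree, and $f$ need not send components to components. The clean fix is to strengthen the statement you induct on to: \emph{for any morphism $g\colon X'\to Y'$ of reduced finite-type $\C$-schemes that is bijective on $\C$-points, $[X']=[Y']$}. Given such $g$, choose a top-dimensional irreducible component $Y_1\subseteq Y'$, set $Y_1^{\circ}=Y_1\setminus\overline{Y'\setminus Y_1}$ (open in $Y'$ and irreducible), and apply your steps (1)--(4) to $g^{-1}(Y_1^{\circ})\to Y_1^{\circ}$ to peel off an open on which $g$ is an isomorphism. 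The complement has strictly smaller dimension (or fewer top-dimensional components), and Noetherian induction finishes.
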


The construction of the motivic measure of constructible subsets of the arc scheme is based on the result \cite[Theorem 7.1]{mea} by Denef and Loeser, where the authors prove that the limit considered in the next definition exists.
As required by their result, we need to work with the localization $\mathbf{M_0} = \mathbf{K_0}(\operatorname{Var}_\C)[\Li^{-1}]$ as we need $\Li$ to be invertible and with the completion $\widehat{\mathbf{M}}_{\mathbf{0}}$ of $\mathbf{M_0}$ with respect to the filtration $F^k\mathbf{M_0}$, where $F^k\mathbf{M_0}$ denotes the subgroup generated by classes of varieties $\displaystyle\frac{[X]}{\mathbb{L}^i}$ with $\dim X -i \leq -k$. 

\begin{df} \label{def-measure}
Let $X$ be a variety.
A constructible subset in $X_k$ is a union of Zariski locally closed sets in $X_k$ and a constructible subset in the arc scheme $X_\infty$ is a subset of the form $A=\pi_k^{-1}(C_k)$ for $C_k \subset X_k$ constructible and some $k > 0$.
Then, the limit $$ \mu (A) = \lim_{ m\to \infty} \frac{[\pi_m(A)]}{\Li^{m \dim X}} $$ exists in $\widehat{\mathbf{M}}_{\mathbf{0}}$ and we call it the motivic measure of $A$.
\end{df}

\begin{rmk}
As observed in \cite[Definition-Proposition 3.2]{mea}, in the case of a smooth variety $X$ for $A=\pi_k^{-1}(C_k)$ a constructible set in the arc scheme $X_\infty$ the definition of motivic measure simplifies to $$\mu (A) = \displaystyle\frac{[\pi_k(A)]}{\Li^{k \dim X}}.$$
\end{rmk}

\begin{rmk}
As also discussed in the accompanying result \cite[Definition-Proposition 3.2]{mea} the definition of $\mu$ extends from constructible subsets to a $\sigma$-additive measure $\mu$ on the Boolean algebra of constructible subsets of the arc scheme.  
\end{rmk}

To conclude this section, we introduce the measure of certain special sets of arcs and their generating series, obtained as a parameteric motivic integral called Igusa zeta functions and taking values in the ring $\widehat{\mathbf{M}}_{\mathbf{0}}[[T]]$ of formal power series with coefficients in $\widehat{\mathbf{M}}_{\mathbf{0}}$. Motivic Igusa zeta functions were defined in \cite{DL}.
\begin{df} \label{ord} \label{def-val}
Let $W$ be a variety and let $V \subseteq W$ be a closed subvariety. 
We consider the integrable function $ord_V : W_\infty \setminus V_\infty \to \mathbb N$ returning the order of arcs along the non-zero coherent sheaf of ideals defining $V$. Concretely, if $\mathcal{I}$ denotes the latter and $p_\gamma = \gamma (\operatorname{Spec}(\C)) \in V$ then $$ord_V (\gamma) = \operatorname{inf} \{ val_t(\gamma^*(f)) \, | \, f \in \mathcal{I}_{p_\gamma} - \{ \mathbf{0} \}  \} $$ where $val_t$ denotes the $t$-adic valuation (or simply valuation). 
\end{df}
We often refer to $ord_{V}(\gamma)$ as the valuation of the arc.

\begin{rmk}
From now on, we will omit removing $V_\infty$ and just write $W_\infty$ since $\mu(V_\infty) =0$, see \cite[Chapter 7, Section 3, Subsection 3.3]{greenbook}.   
\end{rmk}

\begin{df} \label{def:contact-locus}
In the same setting as Definition \ref{def-val}, we define the $n$-th contact locus of $V$ as $$\Cont^n_V = ord^{-1}_V(n) \subseteq W_\infty$$ and its measure as $$\mu(\Cont^n_V) = \frac{[\pi_{n+1}(\Cont^n_V)]}{\Li^{(n+1) \dim X}} .$$
\end{df}
\begin{df} \label{def-igusa}
Let $W$ be a variety and let $V \subseteq W$ be a closed subvariety. Let $A \subseteq W_\infty$ be a constructible subset.
The motivic Igusa zeta function of $V$ over $A$ is defined as $$Z^A_V(T) = \displaystyle\int_{A} T^{ord_V} d\mu = \sum_{n = 0}^\infty\mu(\Cont^n_V)T^n \in \widehat{\mathbf{M}}_{\mathbf{0}}[[T]] .$$ 
\end{df}
The formal variable $T$ is usually written as $\Li^{-s}$ for $s$ a formal parameter, and we usually omit the upper $A$ when $A= \pi_1^{-1}(0)$.

\subsection{Embedded resolutions}
The study of embedded resolutions was extensively developed by Hironaka, in addition to many others, converging in his result regarding the existence of embedded resolutions of singular varieties over a field of characteristic zero. We refer the reader to \cite{resol} for any further details. 
Since our object of interest is plane curves, that is, the pair $(\mathbb A^2, C)$ for some curve $C$ we will assume the ambient variety to be smooth.

\begin{df}
Let $D$ be a divisor of a smooth variety, namely a closed subscheme of pure codimension $1$, and let $I$ be the set indexing of its irreducible components. We denote by $D_i$ the irreducible component indexed by $i \in I$.
For $J \subseteq I$ we denote with $$D_J =  \displaystyle\bigcap_{j \in J} D_j$$ and with $$D_J^o = D_J  \setminus \displaystyle\bigcup_{j \not\in J} D_j.$$
\end{df}

\begin{df}
Let $D$ be a divisor of a smooth variety. We say that $D$ has simple normal crossing if the irreducible components of $D$ are smooth and meet transversally.
\end{df}

\begin{df} \label{res}
Let $W$ be a variety and let $V \subseteq W$ be a closed subscheme.
Then, an embedded resolution of the pair $(W,V)$ is a pair $(Y,\rho)$ where $\rho: Y \rightarrow W$ is a proper birational morphism, $Y$ is smooth, and $\rho^{-1}(V)$ is a simple normal crossing divisor on $Y$.
\end{df}
\begin{df}
We call strict transform $\widetilde{V}$ of $V$ the component of $\rho^{-1}(V)$ that corresponds to the closure of the dense subset of $V$ where $\rho$ is an isomorphism.  
\end{df}
We usually denote by $I$ the indexing set of the irreducible components of $\rho^{-1}(V)$, including the strict transform unless otherwise specified (explicitly writing of $\widetilde{V}$ and following the notation $E_i, i \in I$ for the components. We denote their union by $$E = \displaystyle\bigcup_{i \in I } E_i.$$ 
We refer to a component $E_i$ as an exceptional divisor.

\begin{thm}[{\cite[Main Theorem II(N)]{resol}}]
Let $\C$ be a field of characteristic zero and $(W,V)$ a variety and closed subvariety respectively over $\C$. Then, there exists an embedded resolution $(Y, \rho)$ of the pair such that it is an isomorphism outside of $W_s \cup V$, where $W_s$ denotes the singular locus of $W$. Moreover, the morphism $\rho$ is a composition of blow-ups with smooth centers. 
\end{thm}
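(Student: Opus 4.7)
The strategy is to reduce the claim to the non-emptiness of the contact locus $\Cont^n_C$ and then construct explicit arcs realizing each required contact order $n \geq N$ by working on the resolution $Y$. Since $\UHilb^n_{0,1}(C) := \operatorname{Im}(\Pi)$ is the image of a morphism defined on $\pi_{n+1}(\Cont^n_C)$, it suffices to exhibit, for each $n \geq N$, an arc $\gamma \in (\mathbb A^2)_\infty$ with $\gamma(0) = 0$ and $\operatorname{ord}_C(\gamma) = n$.

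The key computational tool is the divisor decomposition $\rho^* C = \widetilde{C} + \sum_i N_i E_i$ on $Y$. Since $\rho$ is proper and birational and is an isomorphism away from $\rho^{-1}(0)$, any arc $\widetilde{\gamma} \in Y_\infty$ with $\widetilde{\gamma}(0) \in E$ descends to an arc $\gamma = \rho \circ \widetilde{\gamma} \in (\mathbb A^2)_\infty$ centered at $0$, and one has $\operatorname{ord}_C(\gamma) = \operatorname{ord}_{\widetilde{C}}(\widetilde{\gamma}) + \sum_i N_i \operatorname{ord}_{E_i}(\widetilde{\gamma})$. Thus the problem reduces to prescribing divisorial orders on $Y$.

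A crucial structural input I would use, standard in the resolution theory of unibranch plane curve singularities, is that the strict transform $\widetilde{C}$ is smooth and meets the exceptional divisor $E$ transversally at a single point $p$, lying on a unique irreducible component $E_{i^*}$, whose multiplicity is precisely $N_{i^*} = N = \max_i N_i$. I would justify this by induction on the tower of point blow-ups building $\rho$: when we blow up a point lying on previous components of multiplicities $N_{j_1}, \dots, N_{j_s}$ where the current strict transform has multiplicity $m \geq 1$, the new exceptional component appears in $\rho^* C$ with multiplicity $m + \sum_l N_{j_l}$, strictly larger than any $N_{j_l}$. Consequently the multiplicities grow along the chain of blow-ups and peak at the terminal one, which is precisely the component met by the final smooth strict transform.

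Granting this, I would produce the desired arcs in two cases. To realize $n = N$, pick a point $q \in E_{i^*}$ lying on no other component of $\rho^{-1}(C)$ and local coordinates $(u, w)$ on $Y$ centered at $q$ with $E_{i^*} = \{u = 0\}$; the arc $\widetilde{\gamma}(t) = (t, 0)$ then satisfies $\operatorname{ord}_{E_{i^*}}(\widetilde{\gamma}) = 1$ and $\operatorname{ord}_{E_j}(\widetilde{\gamma}) = \operatorname{ord}_{\widetilde{C}}(\widetilde{\gamma}) = 0$ for $j \neq i^*$, whence $\operatorname{ord}_C(\gamma) = N$. To realize $n \geq N + 1$, choose local coordinates $(u, v)$ on $Y$ centered at $p$ with $E_{i^*} = \{u = 0\}$ and $\widetilde{C} = \{v = 0\}$, and take $\widetilde{\gamma}(t) = (t, t^{n - N})$; then $\operatorname{ord}_{E_{i^*}}(\widetilde{\gamma}) = 1$, $\operatorname{ord}_{\widetilde{C}}(\widetilde{\gamma}) = n - N \geq 1$, and all other divisorial orders vanish, giving $\operatorname{ord}_C(\gamma) = N + (n - N) = n$. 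The main obstacle is the structural input identifying the maximal multiplicity $N$ with the component through which $\widetilde{C}$ exits; once granted, the remainder of the argument is an explicit local computation in affine coordinates on $Y$.
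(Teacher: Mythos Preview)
Your proposal does not address the stated theorem at all. The statement in question is Hironaka's embedded resolution theorem, a deep classical result that the paper merely \emph{cites} (as \cite[Main Theorem II(N)]{resol}) and does not prove; nothing in your write-up engages with the construction of a resolution of $(W,V)$ in characteristic zero. Instead, your argument \emph{assumes} an embedded resolution $\rho:(Y,E)\to(\mathbb A^2,C)$ already exists and uses it to produce arcs of prescribed contact order. What you have written is a proof of a different statement in the paper, namely Proposition~\ref{threshold}: $\UHilb^n_{0,1}(C)\neq\emptyset$ for all $n\geq N=\max_i N_i$.

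Read as a proof of Proposition~\ref{threshold}, your argument is essentially correct and parallels the paper's own proof closely. Both rely on the same structural fact about unibranch plane curve resolutions: the strict transform $\widetilde{C}$ meets a unique exceptional component $E_{i^*}$, and that component carries the maximal multiplicity $N$. The paper deduces non-emptiness of $\Cont^n$ from Denef's formula (Theorem~\ref{denef-contact}), reading off the contributions $J=\{i^*\}$ (giving order $N$) and $J=\{i^*,\widetilde{C}\}$ (giving orders $N+e$ for all $e\geq 1$). You instead construct the witnessing arcs by hand on $Y$ in local coordinates and push them down via $\rho$, using the pullback identity $\operatorname{ord}_C(\rho\circ\widetilde{\gamma})=\operatorname{ord}_{\widetilde C}(\widetilde{\gamma})+\sum_i N_i\,\operatorname{ord}_{E_i}(\widetilde{\gamma})$. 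This is exactly the computation underlying Denef's formula, so the two approaches coincide in substance; yours is simply more explicit and avoids quoting Theorem~\ref{denef-contact} as a black box.
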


Utilizing the transversality condition, we can deduce certain numerical information of the singularity as presented in \cite[Chapter 7, Theorem 3.3.4]{greenbook}. Before that, we recall the notion of multiplicity of a non-zero coherent sheaf of ideals at a point.
\begin{df}
We recall that given a variety $X$ and a point $p \in X$, the multiplicity of a non-zero coherent sheaf of ideals $\mathcal{I}$ on $X$ at $p$ is defined as $$ \operatorname{max} \{ \mu \, | \, \mathcal{I}_p \subseteq m_p^\mu \} $$ where $m_p \subseteq \mathcal{O}_{X, p}$ denotes the maximal ideal of $p$.
\end{df}

\begin{df} \label{ddef-data} \label{mult}
Let $\rho: (Y, E) \rightarrow (W,V)$ be an embedded resolution as in Definition \ref{res} and let $\mathcal{I}$ be the non-zero coherent sheaf of ideals determining $V$.
For every $i \in I$ we denote by $N_i$ the multiplicity of $\mathcal{I} \, \mathcal{O}_Y$ along any smooth point of $E_i$ and by $\nu_i$ the multiplicity of $K_{Y | W}$ along any smooth point of $E_i$. These are respectively called multiplicity and discrepancy of $E_i$.
\end{df}

Finally, we present a classic example of an embedded resolution, whose details can be found in \cite[Example 3.1.9]{greenbook}.
\begin{ex} \label{ex-cusp}
We briefly recall the resolution of singularities of the cusp $f = x^2 - y^3 \in \C[x, y]$. One can check that three blowups are needed, thus obtaining three exceptional divisors in addition to the strict transform of the curve for which the multiplicities and discrepancies are $(N_i, \nu_i)_{i=1,2,3} = (2, 1),(3, 2),(6, 4)$.
\end{ex}

We conclude this section by presenting a formula for $[\pi_{n+1}(\Cont^n_V)]$, hence $\mu(\Cont^n_V)$, in terms of an embedded resolution of $(W,V)$, this highlighting a connection between invariants of singularities and motivic integration, which was also our original reason of interest, as explained in Section \ref{intro}. We refer to it as Denef's formula, as it was proven in \cite[Proposition 3.2.1]{DL}.
\begin{thm} \label{denef-contact}
Let $\rho: (Y, E) \rightarrow (W,V)$ be an embedded resolution as in Definition \ref{res}. We have
$$[\pi_{n}(\Cont^n_V)] = 
\Li^{n \dim W}
\sum_{J \subseteq I, J \neq \emptyset}
(\Li-1)^{|J|-1}[E_J^o]
\left( \sum_{k_j \geq 1, \, j \in J, \, \sum k_jN_j = n}
\Li^{-\sum_{j \in J} k_j \nu_j} \right)$$
for every $n \geq 1$.
\end{thm}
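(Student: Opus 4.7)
The plan is to apply the motivic change of variables formula to the embedded resolution $\rho: Y \to W$ and then stratify the arc space $Y_\infty$ according to the contact orders along the exceptional components $E_i$. Because $\rho$ is proper birational with $Y$ smooth, the induced map $\rho_\infty: Y_\infty \to W_\infty$ is a bijection away from a measure-zero subset contained in $E_\infty$, and the computation of $[\pi_n(\Cont^n_V)]$ can be transferred to $Y_\infty$ at the cost of a Jacobian correction $\Li^{-ord_{\operatorname{jac}(\rho)}}$. Since $K_{Y|W} = \sum_i \nu_i E_i$ and $\rho^{-1}(V) = \sum_i N_i E_i$ as Cartier divisors, one has the clean identities $ord_V(\rho \circ \gamma) = \sum_i N_i \, ord_{E_i}(\gamma)$ and $ord_{\operatorname{jac}(\rho)}(\gamma) = \sum_i \nu_i \, ord_{E_i}(\gamma)$, which reduces the rest of the argument to a bookkeeping exercise in terms of exceptional multi-contact orders.

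Next, I would partition $Y_\infty$ (up to measure zero) into the locally closed strata
$$A_{J,(k_j)} := \{\gamma \in Y_\infty : ord_{E_j}(\gamma) = k_j \text{ for all } j \in J, \; ord_{E_i}(\gamma) = 0 \text{ for } i \notin J\}$$
indexed by nonempty $J \subseteq I$ and tuples $(k_j)_{j \in J} \in \mathbb{Z}_{>0}^J$. By the identities above, $\rho$ sends $A_{J,(k_j)}$ into $\Cont^n_V$ exactly when $\sum_{j \in J} k_j N_j = n$. Using the simple-normal-crossing structure of $E$, around any point of $E_J^o$ one can choose local coordinates in which each $E_j$ (for $j \in J$) is a coordinate hyperplane, so arcs in $A_{J,(k_j)}$ have the form $\gamma_j(t) = t^{k_j} u_j(t)$ with $u_j(0) \in \C^\ast$ for $j \in J$, while the remaining coordinates are free power series and the base point varies over $E_J^o$. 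Invoking Proposition \ref{fib}, this exhibits $\pi_m(A_{J,(k_j)})$ for $m \gg 0$ as a Zariski-locally trivial fibration over $E_J^o$, whose fiber class carries factors of $(\Li - 1)$ from each nonvanishing leading coefficient $u_j(0)$ and appropriate powers of $\Li$ from the remaining coefficients.

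Finally, I would assemble the stratumwise contributions with the Jacobian correction $\Li^{-\sum_j k_j \nu_j}$ and sum over all $(J, (k_j))$ with $\sum_j k_j N_j = n$. Combining with the conversion $\mu(\Cont^n_V) = [\pi_n(\Cont^n_V)]/\Li^{n \dim W}$ between the motivic measure and the truncation class should reproduce the claimed formula. The main obstacle I anticipate is the precise computation of $[\pi_m(A_{J,(k_j)})]$ under the SNC local coordinates, and in particular verifying that the combined power of $(\Li - 1)$ coming from the fiber computation and from the Jacobian correction collapses exactly to $(\Li-1)^{|J|-1}$ as in the statement; this delicate bookkeeping relies crucially on the simple-normal-crossing hypothesis on $\rho^{-1}(V)$, which is what makes the local model a product of one-variable computations and yields a clean multiplicative expression stratum by stratum.
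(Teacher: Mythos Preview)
Your outline is the standard route to Denef's formula---motivic change of variables along the resolution, followed by stratification of $Y_\infty$ by multi-contact orders along the SNC components---and is essentially the argument of Denef and Loeser in the cited reference. Note, however, that the paper does not give its own proof of this statement: Theorem~\ref{denef-contact} is quoted as background with attribution to \cite[Proposition 3.2.1]{DL}, so there is no in-paper argument to compare against.

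On your stated worry about the final exponent of $(\Li-1)$: your local SNC computation correctly produces a factor $(\Li-1)^{|J|}$ from the $|J|$ nonvanishing leading coefficients $u_j(0)$, together with an additional $\Li^{-\sum_j k_j}$ from the vanishing low-order coefficients. In the Denef--Loeser normalization these extra factors are absorbed by writing the log-discrepancy $\nu_j+1$ in place of the discrepancy $\nu_j$ of Definition~\ref{ddef-data}, and the truncation level ($n$ versus $n{+}1$) accounts for one global factor of $\Li^{\dim W}$. The formula as displayed in the paper should be read with that convention in mind; your method is sound, and the discrepancy you anticipate is purely notational rather than mathematical.
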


\subsection{Puiseux parameterizations and unibranch curves} \label{sec:puiseux}

Unless otherwise stated, $(C,0)$ will always denote a plane curve locally defined as the zero locus of $f \in \mathbb C[[x,y]]$, $f(0)=0$.
We denote by $f_d$ the $d$-th degree homogeneous component of $f$, and the smallest $d >0$ such that $f_d \neq 0$ is called multiplicity of the singularity and denoted by $mult_C(0) = mult_C$.

\begin{df} \label{def-unibranchcurve}
Let $(C, 0)$ be a plane curve and let $f \in \C[[x,y]], f=\prod_i f_i $ be the equation locally defining it and its factorization in $\C[[x,y]]$ in irreducible factors allowing repetitions. We call branches of $C$ the curves corresponding to each $f_i$, and we say that $C$ is unibranch when it has only one branch i.e. $f$ is irreducible in $\C[[x,y]]$.
\end{df}

From now on we will always work in the unibranch case.
We now recall some details about Puiseux parameterizations of unibranch curves. We refer the reader to \cite{Brieskorn} or \cite{Wall} for any further details.

\begin{df} \label{def-powerseries}
We recall the notation $\C[[t]]$ for the domain of formal power series with coefficients in the field $\C$ and indeterminate $t$, and we write its general element as $$p = p(t) = \displaystyle\sum_{n = 0}^\infty a_n t^n.$$
Moreover, we denote by $\C[[t]]_0$ the domain of formal power series centered at $0$, that is $\C[[t]]_0 = \C[[t]] \setminus \C[[t]]^\times$. This equivalently corresponds to formal power series with $$a_0 =0.$$
\end{df}
\begin{df} \label{def-t-valuation}
Following the notations of Definition \ref{def-powerseries}, we say that $p \in \C[[t]]$ has valuation $k$ if $a_k \neq 0$ is the smallest non-zero coefficient. We denote this by $$k = val_t(p).$$
\end{df}
\begin{rmk}
We observe that $\C[[t]]_0 = \{ p \in \C[[t]] \, | \, val_t(p)>0 \}$.
\end{rmk}

\begin{df} \label{def-params}
Let $p,q \in \C[[t]]_0$ as in Definition \ref{def-powerseries} and let $(C,0)$ be a unibranch plane curve as in Definition \ref{def-unibranchcurve}. We say that the pair $(p,q)$ is a parameterization of $C$ if $$f(p,q)=0$$ in $\C[[t]]$. 
\end{df}
\begin{df} \label{def:smooth-reps}
We say that two pairs $(p,q)$, $(p',q')$ are equivalent parameterizations when they are parameterizations for the same curve $C$, that is, there exists $\alpha \in \C[[t]]_0$ with $val_t(\alpha)=1$ such that $$p'(t)=p(\alpha(t)), q'(t)=q(\alpha(t))  .$$ 
\end{df}
\begin{rmk} \label{rmk-degenerate-par}
We say that a parameterization $(p',q')$ is degenerate if there exist another parameterization $(p,q)$ and $\alpha \in \C[[t]]_0$ with $val_t(\alpha)>1$ such that $p'(t)=p(\alpha(t)), q'(t)=q(\alpha(t)) $. 
Viewing $\C[[t]]$ as $(\mathbb A^1)_\infty$ and following \cite[Definition, pg. 5]{sympow-gorsky}, the parameterization action induced by any  $\alpha \in \C[[t]]_0$ with $val_t(\alpha)>1$ would lead to orbits in $\C[[t]]^2 \cong (\mathbb A^2)_\infty = \Lo$ of motivic measure zero. 
Accordingly to Definition \ref{def:smooth-reps}, and for this reason, we will always assume our parameterizations to be non-degenerate.
\end{rmk}
\begin{rmk} 
We mention here that equivalence classes of parameterizations are also called branches since they all correspond to the same geometric locus, that is, the same set of zeroes in $\mathbb A^2$. Therefore, the branch of the unibranch curve $C$ refers to the zero locus determined by $f$ as well as the set of all parameterizations of $C$.
\end{rmk}
\begin{rmk}
We observe that composing by $\alpha$ in Definition \ref{def:smooth-reps} induces an automorphism of $\C[[t]]$ that preserves $val_t$. This implies that all the parameterizations in a given class have the same valuations.
\end{rmk}

We now recall a classic result by Puiseux, guaranteeing the existence of parameterizations for polynomials in the indeterminate $y = t$ and coefficients in $\C[[x]]$. Finding a parameterization corresponds to finding a solution of the given polynomial in $P = \displaystyle\cup_{k=1}^\infty \C((t^\frac{1}{k}))$, and Puiseux's theorem ensures the existence of a solution by constructively proving that $P$ is algebraically closed. The same result and proof can be extended to $f \in \C[[x,y]]$, namely power series instead of polynomials, as pointed out in \cite[Section 2.1]{Wall}.

\begin{thm}[\cite{pui}] \label{thm-puiseux}
Let $\C((t))$ denote the quotient field of $\C[[t]]$ and let $P = \displaystyle\cup_{k=1}^\infty \C((t^\frac{1}{k}))$. Then, $P$ is an algebraically closed field.
\end{thm}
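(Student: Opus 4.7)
The plan is to show that every nonconstant monic polynomial $f(y) \in P[y]$ admits a root in $P$, after first noting that $P$ is indeed a field: the natural inclusions $\C((t^{1/k})) \hookrightarrow \C((t^{1/kl}))$ sending $t^{1/k} \mapsto (t^{1/kl})^l$ make $P$ a directed union of fields. Since $f$ has only finitely many coefficients, they all lie in a common $\C((s))$ for $s = t^{1/k}$, so after renaming $s$ back to $t$ it suffices to find a root in $\bigcup_l \C((t^{1/l}))$ of a monic polynomial $f(y) \in \C((t))[y]$ of degree $n \geq 1$.

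The central tool is the Newton polygon of $f(y) = y^n + a_{n-1}(t) y^{n-1} + \dots + a_0(t)$, defined as the lower convex hull in $\mathbb{R}^2$ of the points $\{(i, val_t(a_i)) : a_i \neq 0\}$. For each edge of slope $-\lambda$ with $\lambda = p/q$ in lowest terms, I would make the substitution $y = t^{p/q} z$ and factor out the appropriate power of $t$, producing a monic polynomial $\tilde{f}(z) \in \C[[t^{1/q}]][z]$ of degree $n$ whose reduction $\bar{f}(z) \in \C[z]$ modulo $t^{1/q}$ is exactly the contribution of the coefficients lying on that edge, and hence of positive degree. Since $\C$ is algebraically closed, $\bar{f}$ has a root $c_0$; if $c_0$ is a simple root, Hensel's lemma applied to the complete local ring $\C[[t^{1/q}]]$ lifts $c_0$ to a root $z_0 \in \C[[t^{1/q}]]$, yielding the desired root $y_0 = t^{p/q} z_0 \in \C((t^{1/q})) \subseteq P$ of $f$.

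The main obstacle is the case when $\bar{f}$ has only multiple roots, since then Hensel's lemma fails and one must iterate. I would pick any root $c_0$ of $\bar{f}$, substitute $z = c_0 + w$, and restart the Newton polygon analysis on the resulting polynomial in $w$. Iterating, one either reaches a stage at which Hensel's lemma applies (ending the process in finitely many steps), or produces an infinite sequence of substitutions whose partial sums I would need to show form a $t$-adically Cauchy sequence. The technical heart of the argument is then proving that the ramification denominators $q$ encountered along the iteration stabilize into a single common $N$ bounded in terms of $n$, so that the accumulated series defines an element of $\C((t^{1/N})) \subseteq P$, and that its $t$-adic limit is a genuine root of $f$. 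Together with the termination case this yields the algebraic closedness of $P$.
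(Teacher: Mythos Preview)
The paper does not supply its own proof of this theorem: it is stated as a classical result with a citation to \cite{pui}, and the surrounding text only remarks that Puiseux's theorem ``constructively'' produces a root. Your proposal is precisely that constructive Newton--Puiseux algorithm, so in spirit it matches what the paper is invoking.

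Your sketch is correct and standard. The reduction to a single $\C((t))$, the Newton polygon substitution $y=t^{p/q}z$, and the appeal to Hensel's lemma at a simple root are all fine. You are also right that the delicate point is the iteration when the reduced polynomial has only multiple roots, and that one must bound the ramification denominators uniformly. One clean way to close that gap, which you might add: after a Tschirnhaus substitution $y\mapsto y-\tfrac{a_{n-1}}{n}$ (using $\operatorname{char}\C=0$) the reduced polynomial cannot be a pure $n$-th power $(z-c_0)^n$ with $c_0\neq 0$, so the multiplicity strictly drops below $n$; inductively the sequence of multiplicities is strictly decreasing until it reaches $1$, at which point Hensel applies after finitely many steps. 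Alternatively, for irreducible $f$ of degree $n$ one can argue abstractly that the splitting field is a complete discretely valued extension of $\C((t))$ of degree $n$ with residue field $\C$, hence isomorphic to $\C((t^{1/n}))$, which forces $N\mid n$. Either route justifies the boundedness you flagged.
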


\begin{rmk}[{\cite[Pg. 406]{Brieskorn}}] \label{rmk}
From the proof of Theorem \ref{thm-puiseux} it also follows that in the equivalence class of Definition \ref{def-shape-of-param} there exists a representative of the form $$(p_C, q_C) = \left(t^n, \sum_{m \geq n} a_m t^m \right)$$ with $n = mult_C$.
\end{rmk}
\begin{df} \label{def-repams} \label{def-shape-of-param}  \label{def:puiseux-param}
We refer to the parameterization $(p_C, q_C)$ of Remark \ref{rmk} as the Puiseux parameterization of $(C,0)$.  
\end{df}

The equivalence class of parameterizations of $C$ is also equivalent to the data of Puiseux pairs, which we avoid defining and refer the reader can find in \cite[Pg. 406]{Brieskorn}. We just limit ourselves to introducing the following definition.
\begin{df} \label{def:pq-curve}
Let $(C,0)$ be a unibranch curve. We say that $C$ is a $(p,q)$-curve when $(p_C , q_C) = (t^p, t^q)$ for some $p,q >0$ with $gcd(p,q)=1$.
\end{df}

We also recall this useful fact, which allows us to compute the intersection number of two curves using any of their equivalent parameterizations.
\begin{lemma}[{\cite[Lemma 1.2.1]{Wall}}] \label{back:intersection-numb} \label{lemma-intersection-num}
Let $(p,q)$ be a parameterization of $C$ as in Definition \ref{def-params} and let $(D,0)$ be another unibranch plane curve locally defined by $g \in \C[[x,y]]$, $g(0)=0$. Then, the intersection number of the two curves can be computed as $$C.D = val_t(g(p,q))  .$$  
\end{lemma}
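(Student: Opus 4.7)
The plan is to reduce the intersection number to a valuation computation in $\mathbb{C}[[t]]$, exploiting the fact that unibranchness forces the normalization of the local ring of $C$ at $0$ to be the DVR $\mathbb{C}[[t]]$.

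First, I would unpack the standard algebraic definition of the local intersection number as
$$C.D = \dim_\C \mathbb{C}[[x,y]]/(f,g),$$
where $f \in \mathbb{C}[[x,y]]$ locally defines $C$. Since $C$ is unibranch, $f$ is irreducible, so $\ring = \mathbb{C}[[x,y]]/(f)$ is a one-dimensional Noetherian local domain. The parameterization $(p,q)$ induces a ring homomorphism $\mathbb{C}[[x,y]] \to \mathbb{C}[[t]]$ sending $x \mapsto p$, $y \mapsto q$, whose kernel is exactly $(f)$; non-degeneracy of $(p,q)$ (Remark \ref{rmk-degenerate-par}) together with irreducibility of $f$ ensures that the induced map $\ring \hookrightarrow \mathbb{C}[[t]]$ is injective and realizes $\mathbb{C}[[t]]$ as the normalization of $\ring$. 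Under this identification $g$ is sent to $g(p,q)$, so
$$\mathbb{C}[[x,y]]/(f,g) \;\cong\; \ring / (g(p,q)),$$
and it suffices to prove $\dim_\C \ring/(h) = val_t(h)$ for $h := g(p,q)$.

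Second, I would compare $\dim_\C \ring/(h)$ with $\dim_\C \mathbb{C}[[t]]/(h)$. For the DVR $\mathbb{C}[[t]]$ the identity $\dim_\C \mathbb{C}[[t]]/(h) = val_t(h)$ is immediate. To transfer this to $\ring$, apply the snake lemma to multiplication by $h$ on the short exact sequence
$$0 \to \ring \to \mathbb{C}[[t]] \to \mathbb{C}[[t]]/\ring \to 0.$$
Since $h \neq 0$ and both $\ring$ and $\mathbb{C}[[t]]$ are domains, multiplication by $h$ is injective on the first two terms, yielding the four-term exact sequence
$$0 \to \ker\bigl(h \mid \mathbb{C}[[t]]/\ring\bigr) \to \ring/(h) \to \mathbb{C}[[t]]/(h) \to \mathrm{coker}\bigl(h \mid \mathbb{C}[[t]]/\ring\bigr) \to 0.$$
Because $\mathbb{C}[[t]]/\ring$ is a finite-dimensional $\mathbb{C}$-vector space (the $\delta$-invariant of the singularity is finite), multiplication by $h$ on it has kernel and cokernel of equal $\mathbb{C}$-dimension. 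The alternating sum of dimensions then forces $\dim_\C \ring/(h) = \dim_\C \mathbb{C}[[t]]/(h) = val_t(h)$, as required.

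The main obstacle is the finiteness $\delta = \dim_\C \mathbb{C}[[t]]/\ring < \infty$, which is precisely what makes the snake-lemma cancellation work. This rests on the unibranch hypothesis: a single branch corresponds to a single valuation on $\mathrm{Frac}(\ring)$, so the integral closure of $\ring$ really is the DVR $\mathbb{C}[[t]]$ rather than a product of several DVRs. Once this finiteness is in place, combining the two reductions above gives $C.D = val_t(g(p,q))$.
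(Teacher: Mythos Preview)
The paper does not give its own proof of this lemma; it is quoted as background from \cite[Lemma 1.2.1]{Wall} without argument, so there is nothing in the paper to compare against. Your proof is correct and is essentially the standard one: the identification $\C[[x,y]]/(f,g)\cong \ring/(h)$ with $h=g(p,q)$, followed by the snake-lemma comparison of $\ring/(h)$ and $\C[[t]]/(h)$ using the finiteness of $\delta=\dim_\C \C[[t]]/\ring$, is exactly how one matches the algebraic intersection multiplicity with the $t$-adic order along a branch.
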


\subsubsection{Properties of the completion of the local coordinate ring of a unibranch plane curve}
Let $(C,0)$ be a unibranch curve as in Definition \ref{def-unibranchcurve} and let $(p_C,q_C)$ denote its Puiseux parameterization as in Definition \ref{def:puiseux-param}. We denote by $\ring$ the (completion of the) local ring of the curve, that is $$\ring \cong \C[[x,y]] / (f) \cong \C[[p_C, q_C]]. $$

We need to keep track of the valuations appearing in $\ring$ since they will be crucial in later sections to get to our central results. To do so, we recall the notion of (numerical) semigroup of $C$.
\begin{df} \label{def-semigroup}
We denote by $$\Gamma_C = \{ val_t(g) \, | \, g \in \ring   \}  $$
and we refer to it as the semigroup of the curve. We simply denote it by $\Gamma$ when it is clear which curve $C$ we refer to. We denote by $$H = \mathbb N \setminus \Gamma$$ the set of holes of the semigroup, and we refer to $$|H| = \delta$$ as the delta invariant of the curve.
\end{df}

We recall that any semigroup, considered as a submonoid of $\mathbb N$, admits a unique minimal system of generators.
\begin{thm}[{\cite[Theorem 2.7]{semig}}]
Every semigroup admits a unique minimal system of generators. This minimal system of generators is finite.
\end{thm}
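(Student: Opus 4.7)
The plan is to identify explicitly the unique minimal generating set as the set of \emph{atoms} (or irreducibles) of the semigroup, and then bound them using the smallest non-zero element. Throughout, let $\Gamma \subseteq \mathbb{N}$ be a numerical semigroup (a submonoid of $\mathbb{N}$ with $0 \in \Gamma$ and $|\mathbb{N} \setminus \Gamma| < \infty$, as in Definition \ref{def-semigroup}), and define
\[
A(\Gamma) = \{ s \in \Gamma \setminus \{0\} \;:\; s \neq a+b \text{ for any } a,b \in \Gamma \setminus \{0\} \}.
\]

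First I would show that $A(\Gamma)$ generates $\Gamma$, by strong induction on the elements of $\Gamma$. The smallest non-zero element of $\Gamma$ is automatically in $A(\Gamma)$. For $s \in \Gamma \setminus \{0\}$, either $s \in A(\Gamma)$ or $s = a+b$ with $a,b \in \Gamma \setminus \{0\}$ and $a,b < s$; by induction both $a$ and $b$ are $\mathbb{N}$-linear combinations of elements of $A(\Gamma)$, hence so is $s$. Next I would prove that any generating set $S \subseteq \Gamma \setminus \{0\}$ must contain $A(\Gamma)$: if $s \in A(\Gamma)$ and we write $s = s_1 + \cdots + s_k$ with $s_i \in S \subseteq \Gamma \setminus \{0\}$, then since $s$ cannot be expressed as a sum of two non-zero elements of $\Gamma$ we must have $k=1$, giving $s = s_1 \in S$.

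Combining these two facts yields uniqueness: a minimal generating set is contained in $A(\Gamma)$ (otherwise one could remove an element not in $A(\Gamma)$, contradicting minimality in the sense that such an element would itself be a non-trivial sum and hence redundant — to make this precise, I would argue by induction that every element of $\Gamma \setminus A(\Gamma)$ is already in the subsemigroup generated by $A(\Gamma)$, so a generating set strictly containing $A(\Gamma)$ is not minimal) and contains $A(\Gamma)$, hence equals $A(\Gamma)$. Thus $A(\Gamma)$ is the unique minimal system of generators.

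For finiteness, let $e = \min(\Gamma \setminus \{0\})$. For each residue class $r \in \{0,1,\dots,e-1\}$ let $w_r$ denote the smallest element of $\Gamma$ congruent to $r$ modulo $e$; these exist and are finite in number because $\mathbb{N} \setminus \Gamma$ is finite, so every residue class eventually enters $\Gamma$. Any $s \in \Gamma$ can then be written as $s = w_r + k e$ for some $r$ and some $k \geq 0$, and when $k \geq 1$ this exhibits $s$ as a sum of two non-zero elements of $\Gamma$, so $s \notin A(\Gamma)$ unless $s = w_r$ (or $s = e$). Therefore $A(\Gamma) \subseteq \{w_0, w_1, \dots, w_{e-1}\} \cup \{e\}$, which is a finite set. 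The mild subtlety, and the only place one has to be careful, is to handle the base case $s = e$ separately and to observe that $w_0 = 0$ is excluded from $A(\Gamma)$; beyond that the argument is purely combinatorial.
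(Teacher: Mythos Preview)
Your argument is correct and is essentially the standard proof of this classical fact about numerical semigroups: identify the minimal generating set with the set of atoms (irreducibles), show it generates by strong induction, show every generating set contains it, and bound its size by the multiplicity via the Ap\'ery set $\{w_0,\dots,w_{e-1}\}$. The only cosmetic point is that your uniqueness paragraph is slightly roundabout; once you know $A(\Gamma)$ generates and is contained in every generating set, minimality of $A(\Gamma)$ and uniqueness are immediate without the extra induction remark.

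As for comparison with the paper: there is nothing to compare. The paper does not give a proof of this statement at all --- it is quoted as a background result from the literature (the reference \cite{semig}) and used without argument. So your write-up supplies a proof where the paper simply cites one.
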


We are interested in a particular class of semigroups, called semigroup ideals, as they will be the semigroups of valuations appearing from $\ring$ in the Hilbert scheme.
\begin{df}
We say that $S \subseteq \Gamma$ is a semigroup ideal if $S + \Gamma \subseteq S$.
\end{df}
\begin{ex}
Let $J \subseteq \ring$ be an ideal and let $\Delta = val_t(J) = \{ val_t(g) \, | \, g \in J   \}$ be the semigroup of the valuations appearing in $J$. It is clear that $\Delta$ is a semigroup ideal. However, the converse is false, as pointed out in \cite[Remark, pg. 13]{os}.
\end{ex}
\begin{df}
Let $\Delta \subseteq \Gamma$ be a semigroup ideal. We denote by $c(\Delta) = min\{ j \in \Delta \, | \, j + \mathbb N \subseteq \Delta \}$ the element from where there are no holes anymore, and we refer to it as conductor of $\Delta$.
\end{df}

Many elements could have the same valuation in $\ring$. However, each valuation $n \in \Gamma$ can be achieved by a special element, as proved in \cite{os}.
\begin{lemma}[{\cite[Lemma 25]{os}}] \label{lemma-os-minimalelem}
Let $n \in \Gamma$. Then, there exists a unique $\tau_n \in \ring$ of the form $$\tau_n = t^n + \displaystyle\sum_{i \in H, i >n} c_{n,i} t^i .$$   
\end{lemma}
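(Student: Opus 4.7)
The plan is to argue existence by a standard $t$-adic elimination procedure that exploits completeness of $\ring$, and to argue uniqueness by the simple observation that a non-zero element of $\ring$ must have valuation in $\Gamma$, not in $H$.

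For existence, start from any $g \in \ring$ with $val_t(g) = n$; rescaling by an element of $\C^\times$ we may assume $g = t^n + \sum_{i>n} a_i t^i$. The goal is to kill every coefficient $a_i$ with $i \in \Gamma$, leaving only terms whose exponents lie in $H$. Enumerate the positions $n < j_1 < j_2 < \dots$ with $j_s \in \Gamma$ and proceed inductively: assume we have constructed $g^{(s-1)} \in \ring$ of the form $t^n + \sum_{i>n,\, i\in H,\, i<j_s} c_i t^i + \sum_{i \geq j_s} b_i^{(s-1)} t^i$. Since $j_s \in \Gamma$ there exists $h_s \in \ring$ with $val_t(h_s) = j_s$ and leading coefficient $b_{j_s}^{(s-1)}$ (obtained from any element of $\ring$ of valuation $j_s$ by rescaling in $\C$). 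Set $g^{(s)} = g^{(s-1)} - h_s \in \ring$; this eliminates the coefficient at $t^{j_s}$ while only affecting exponents $\geq j_s$, so the coefficients at all $i < j_s$ with $i \in H$ are preserved and the required form is maintained.

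The sequence $g^{(s)}$ is Cauchy for the $t$-adic topology, since $g^{(s)} - g^{(s-1)}$ has $t$-order at least $j_s \to \infty$. Because $\ring = \widehat{\mathcal O}_{C,0}$ is complete (its maximal ideal consists of elements of positive $t$-valuation, so the $t$-adic and $\mathfrak{m}$-adic topologies agree), the limit $\tau_n := \lim_s g^{(s)}$ exists in $\ring$. By construction $\tau_n = t^n + \sum_{i \in H,\, i>n} c_{n,i} t^i$, which is the desired form.

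For uniqueness, suppose $\tau_n$ and $\tau'_n$ both have the prescribed shape. Their difference $\tau_n - \tau'_n$ lies in $\ring$ and, since the $t^n$ terms cancel, is supported only on exponents in $H$. If it were non-zero, then $val_t(\tau_n - \tau'_n) \in H$; but every non-zero element of $\ring$ has valuation in $\Gamma$ by definition of $\Gamma_C$, contradicting $H \cap \Gamma = \emptyset$. Hence $\tau_n = \tau'_n$.

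The only real subtlety is verifying that the iterative construction stays inside $\ring$ and converges there; once one notes that completeness of $\ring$ in its maximal ideal coincides with $t$-adic completeness via the Puiseux embedding into $\C[[t]]$, both points become routine, and uniqueness is essentially immediate from the definition of $H$.
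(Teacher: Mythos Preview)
The paper does not give its own proof of this lemma; it is quoted verbatim from \cite[Lemma 25]{os}. Your argument is correct and is essentially the standard one: successive elimination of $\Gamma$-exponents for existence, and the valuation obstruction for uniqueness.

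One small simplification you might note: the Cauchy-sequence/completeness step can be avoided entirely. Since $H$ is finite, after finitely many eliminations you reach $g^{(s)} = t^n + \sum_{i\in H,\, n<i<c} c_i t^i + R$ where $c=c(\Gamma)$ is the conductor and $R$ has $t$-order $\ge c$. Because the conductor ideal satisfies $t^c\,\C[[t]]\subseteq \ring$, the tail $R$ already lies in $\ring$ and can be subtracted in a single step. Equivalently, the $\C$-linear decomposition $\C[[t]]=\ring \oplus \operatorname{span}_\C\{t^h : h\in H\}$ (which follows from $\dim_\C \C[[t]]/\ring=\delta=|H|$ and the independence of the $t^h$ modulo $\ring$) gives both existence and uniqueness at once: write $t^n = r + \sum_{h\in H} a_h t^h$ and take $\tau_n=r$. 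This sidesteps the need to identify the $\mathfrak m$-adic and $t$-adic topologies, which in your sketch is asserted rather than justified (it is true, via finiteness of $\C[[t]]$ over $\ring$, but the one-line reason you give is not quite enough).
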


\subsection{The Hilbert scheme of unibranch plane curves}

The Hilbert scheme of points on $C$ is defined as the moduli space parameterizing zero-dimensional subschemes of $C$ with fixed length, originally introduced in \cite{EGA} for any variety $X$. We recall its definition starting with the construction as a functor of points and then present its representability in the category $\operatorname{Sch}_\C$ of schemes.

\begin{df} \label{def-fun}
Let $X$ be a scheme. For any $n \geq 1$ we consider the functor $H_X^n : \operatorname{Sch}_\C^{op} \to \operatorname{Set}$ 
defined on objects of $\operatorname{Sch}_\C$ as 
$$ S \mapsto H_X^n(S) = \{ Z \subseteq X \times S \, | \,  p|_Z : Z \to S \text{ finite, flat and surjective of degree } n \}$$ and on morphisms given $f: T \to S$ morphism in $\operatorname{Sch}_\C$ as 
\begin{center}
    $H_X^n(f) : H_X^n(S) \to H_X^n(T), \,  Z \mapsto (id_X , f)^{-1} (Z) = \,$ pullback with respect to $(id_X, f) : X \times T \to X \times S$.
\end{center}
\vspace{2mm}       
\begin{center}
\begin{tikzcd}
Z\times_S T \subseteq X \times T \arrow[d, "p_1"'] \arrow[r, "p_2"] & T \arrow[d, "f"] \\
Z \subseteq X \times S \arrow[r, "p|_Z"] & S    
\end{tikzcd}
\begin{tikzcd}
{(x,t)} \arrow[d, maps to] \arrow[r, maps to] & t \arrow[d, maps to]\\
{z=(x,s)} \arrow[r, maps to] & s=f(t)
\end{tikzcd}
\end{center}
\end{df}

The representability of the so-called Hilbert functor $H_X^n$ has been proved in \cite[Theorem 3.2]{EGA}. This is a key fact, as our primary focus will rely on the scheme representing it.

\begin{df} \label{def-hilb}
We denote by $\Hilb^n(X)$ the scheme representing the functor $H_X^n$ and we call it the Hilbert scheme of points of $X$ of length $n$.
The universal subscheme is the subscheme $\mathcal{Z} \in H_X^n(\Hilb^n(X))$ corresponding to $id_{\Hilb^n(X)}$ under representability. 
\end{df}

\begin{rmk}
Using the representability we have a correspondence $$pt \mapsto \mathcal{Z} \displaystyle\cap ( X \times \{ pt \})$$ between closed points of $\Hilb^n(X)$ and zero-dimensional subschemes $Z \subseteq X$ with finite length equal to $\dim \mathcal{O}_Z = n$.
One can view these subschemes as points on the Hilbert scheme.
\end{rmk}

\begin{df}
    Given a zero-dimensional finite-length subscheme $Z \in \Hilb^n(X)$ we call its support the set of points $|Z| = \{ x_1 , \dots , x_k\}$ where the scheme is geometrically supported, and we define the cluster associated to $Z$ as the $0$-cycle $[Z] = \displaystyle\displaystyle\sum_{x_i \in |Z|} \dim \mathcal{O}_{Z, x_i} \, x_i$.
\end{df}

Constraining the support, we can define punctual schemes and {the punctual Hilbert scheme of points} as the ones with support concentrated in one point of higher multiplicity. 

\begin{df} \label{d-h}
Let $X$ be a scheme and let $p \in X$. We denote by $$\Hilb_p^n(X) = \{ Z \in \Hilb^n(X) \, | \, |Z| = \{ p \} \}$$ the closed subscheme of $\Hilb^n(X)$ made of the zero-dimensional subschemes of length $n$ that are supported entirely at the point $p$.
\end{df}

From now on, we will omit the word ``punctual'' and we will always refer to the Hilbert scheme as the one of Definition \ref{d-h}.

Returning to our local case of interest, which is unibranch plane curves, the Hilbert scheme can be described in an even clearer way. Let $(C,0)$ be a unibranch plane curve and let $\ring$ be (the completion of) its local ring of coordinates, as defined in Section \ref{sec:puiseux}.

\begin{ex} \label{ex-ide}
Given a unibranch plane curve $(C,0)$ the Hilbert scheme takes the form 
 $$\Hilb^n_0(C)=\{ I \subseteq  \ring \text{ ideal} \, | \dim_\C\ring / I = n\} = \{ J \subseteq \C[[x,y]] \text{ ideal} \, | \, f \in J \, , \, J \subseteq (x,y) \, , \, \dim_\C\C[[x,y]] / J = n \}.$$
\end{ex}

In this paper, we will be interested in points of the Hilbert scheme with a given number of minimal generators. We recall the following map, that appears in \cite[Conjecture 2']{os}.
\begin{df}
Let $m : \Hilb^n_0(C) \to \mathbb N_{>0}$ be the map defined as 
$$ I \mapsto \dim_\C I/m_0I$$ where $m_0$ denotes the maximal ideal of $\ring$. Equivalently, by Nakayama's lemma we have that $m(I)$ corresponds to the minimal number of generators of $I$ in $\ring$.
\end{df}

\begin{df}
We denote by $\KHilb{n} \subseteq \Hilb^n_0(C)$ the closed subscheme of schemes with $k$ minimal generators i.e. $\KHilb{n}=m^{-1}(k)$ or more explicitly
$$\Hilb_{0, k}^n(C)= \{ I \in \Hilb^n_0(C) \, | \, \dim_\mathbb{K} I/m_0I = k \} .$$ In particular, when $k=1$ we refer to $\PHilb{n}$ as principal Hilbert scheme or one-generator locus, and to its points as principal schemes.  
\end{df}

\begin{rmk}
As observed in \cite[Proof of Theorem 13]{os}, the minimal number of generators of a point in $\Hilb_0(C)$ is bounded by $mult_C$: $\KHilb{n} = \emptyset$ for $k > mult_C$.
\end{rmk}

\subsubsection{Valuation semigroups, syzygies and Hilbert schemes}
From \cite{os} and \cite{ors} we recall the following key results, which allow us to control the generators and semigroup ideals appearing in principal or fixed-generator subschemes. These results will be fundamental in Section \ref{section:length} to control the motivic class of $\KHilb{n}$.

\begin{thm}[{\cite[Definition 26, Theorem 27]{os}}] \label{os-elimination-allowed}
Let $V \subseteq \Gamma$ be a semigroup ideal and let $a_0, \dots , a_l$ denote its generators as a semigroup ideal i.e. $$V = \displaystyle\cup_{i = 0}^l \{ a_i + \Gamma  \} .$$
We denote by $$\Sigma_i = \Gamma_{> a_i} \setminus V$$ and by $$f_i =  \tau_{a_i} + \displaystyle\sum_{j \in \Sigma_i} \lambda_j^i \tau_j  $$ 
where $\tau_{a_i},\tau_j$ are defined as in Lemma \ref{lemma-os-minimalelem}. Let $J_V = (f_0, \dots , f_l)$ and let $$U_V \subset \operatorname{Spec}\C[\lambda_j^i \, | \, j \in \Sigma_i, i= 0, \dots , l] , \, \, U_V = \{ s \in \operatorname{Spec}\C[\lambda_j^i \, | \, j \in \Sigma_i, i= 0, \dots , l] \, | \, %val_t(J_V|_s ) = V  \}
\dim_\C \ring / (J_V|_s) = |\Gamma \setminus V|  \} $$
where the notation $e|_s$ for $e \in U_V$ means taking $\lambda_j^i = s_{i,j} \in \C$ for all $i,j$ (and for an ideal, it means evaluating all generators).
Let $\Hilb^V_0(C)$ denote the subscheme of schemes $I$ such that $val_t(I) = V$. Then, there exists a bijective morphism $$ U_V \to \Hilb^V_0(C), \, s \mapsto J_V|_s .$$
\end{thm}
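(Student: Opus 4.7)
The plan is to establish the bijectivity of $s \mapsto J_V|_s$ in two main stages: surjectivity via a Gr\"obner-style normal-form reduction, and injectivity via a direct valuation obstruction. That $s \mapsto J_V|_s$ is a morphism is automatic from the definition of $J_V$ as a uniform family, and the image indeed lies in $\Hilb^V_0(C)$ once one uses the identity $\dim_\C \ring/J = |\Gamma \setminus \operatorname{val}_t(J)|$ for any ideal $J \subseteq \ring$; this identity follows because $\{\tau_n\}_{n \in \Gamma}$ furnishes a topological $\C$-basis of $\ring$ (obtained by iteratively stripping the leading $\tau$-term), and its image modulo $J$ is indexed exactly by $\Gamma \setminus \operatorname{val}_t(J)$.

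For surjectivity, I would fix $I \in \Hilb^V_0(C)$ and, for each generator $a_i$ of $V$, pick $g_i \in I$ with $\operatorname{val}_t(g_i) = a_i$, rescaled so its leading coefficient is $1$. Already the $\{g_i\}$ generate $I$, since the ideal they produce has the same valuation semigroup $V$ as $I$ and therefore the same length $|\Gamma \setminus V|$. Writing $g_i = \tau_{a_i} + \sum_{j \in \Gamma,\, j > a_i} b_j^i \tau_j$ via this normal form basis, the heart of the argument is to eliminate the summands indexed by $j \in V$. For such a $j$, I decompose $j = a_k + \gamma$ with $\gamma \in \Gamma$, use the product identity $\tau_\gamma \tau_{a_k} = \tau_j + (\text{strictly higher $\tau$-terms})$, and replace $g_i$ by $g_i - b_j^i \tau_\gamma g_k \in I$; this kills the $\tau_j$ coefficient while only perturbing coefficients at valuations strictly exceeding $j$. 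Processing the bad indices in order of increasing valuation and passing to the $t$-adic limit yields $f_i \in I$ of the prescribed shape $\tau_{a_i} + \sum_{j \in \Sigma_i} \lambda_j^i \tau_j$; the expansion is finite because $\Sigma_i \subseteq \Gamma \setminus V$ is bounded above by the conductor of $V$. The resulting $J_V|_s$ with $s = (\lambda_j^i)$ sits inside $I$ with the same valuation semigroup $V$, hence equals $I$, and the length condition automatically places $s \in U_V$.

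Injectivity is immediate from the same valuation principle: if $s, s' \in U_V$ produce the same ideal, each nonzero difference $f_i - f_i' = \sum_{j \in \Sigma_i}(\lambda_j^i - (\lambda_j^i)') \tau_j$ would lie in that ideal and hence have $t$-adic valuation in $V$, yet by construction its valuation is forced into $\Sigma_i \subseteq \Gamma \setminus V$, a contradiction; so every coefficient coincides.

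I expect the main technical obstacle to be the convergence of the reduction procedure in the surjectivity step, namely the assertion that iterating $g_i \mapsto g_i - b_j^i \tau_\gamma g_k$ never disturbs coefficients already normalized at lower valuations. This reduces to making rigorous the expansion identity $\tau_\gamma \tau_{a_k} = \tau_{a_k + \gamma} + (\text{strictly higher $\tau$-terms})$ together with a careful $t$-adic bookkeeping of the propagation of corrections; once these are in place, finiteness of $\Sigma_i$ guarantees termination in the limit, and both the length argument and the injectivity step become routine.
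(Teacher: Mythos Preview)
The paper does not give its own proof of this statement: it is quoted verbatim as a background result from \cite[Definition 26, Theorem 27]{os}, so there is nothing in the present paper to compare against. Your argument is correct and is essentially the standard proof one finds in that reference: surjectivity by a valuation-ordered elimination of the $\tau_j$ with $j \in V$ using the product expansion $\tau_\gamma g_k = \tau_{a_k+\gamma} + (\text{higher terms})$, together with the length identity $\dim_\C \ring/J = |\Gamma \setminus \operatorname{val}_t(J)|$ to conclude $J_V|_s = I$; and injectivity by observing that a nonzero difference $f_i - f_i'$ would have valuation simultaneously in $V$ and in $\Sigma_i \subseteq \Gamma \setminus V$. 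The only point worth making explicit is that during the reduction you may freely use the current (partially reduced) $g_k$ rather than the original one, since all that matters is $\operatorname{val}_t(g_k)=a_k$; and the $t$-adic limit is a genuine element of $I$ because $I$ is closed in the $t$-adic topology (being of finite colength). With these remarks your sketch is complete.
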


\begin{rmk} \label{simplify-reps}
In particular, Theorem \ref{os-elimination-allowed} takes explicit shape as follows for principal schemes.
By definition, we have that $\PHilb{n} = \Hilb^V_0(C)$ for $V = n + \Gamma = a_0 + \Gamma$. 
This implies that $\PHilb{n} = U_V$ and all principal ideals are consequently generated by elements 
$$f_n = \tau_n + \displaystyle\sum_{h \in H \, | \, n+h \in \Gamma} \lambda_h^n \tau_{n+h} $$
for $\lambda_h^n \in \C$ since $$\{n + H \} \displaystyle\cap \Gamma = \{n + H \} \displaystyle\cap \Gamma_{ >n} = \Gamma_{>n} \setminus \{n + \Gamma \} = \Sigma_0 .$$
Therefore, we can write 
$$\PHilb{n} = \{ (f_n) \, | \, \lambda_h^n \in \C \}  $$
and the bijection between $\PHilb{n}$ and $\mathbb A^{| \{ n + H \} \cap \Gamma|}$ immediately follows. It extends to an isomorphism, following \cite[Theorem 1]{piontk}.
\end{rmk}

Finally, let $\Delta \subseteq \Gamma$ be a semigroup ideal. We want to extract explicit conditions on the variables $\lambda_j^i$ of Theorem \ref{os-elimination-allowed} to ensure the ideal they generate has its semigroup equal to $\Delta$.
First, we start by recalling the following results from \cite{ors} that allow us to simplify generators of ideals in $\ring$.
\begin{df}[{\cite[Section 3.1]{ors}}] \label{def-syzy}
We denote by $\C[\Gamma] = \C[t^i \, | \, i \in \Gamma]$ and we choose a basis $\{ \phi_i \}_{i \in \Gamma}$ of $\ring$ that is also a basis of $\C[\Gamma]$, that we denote as
$$ \phi_i = t^i + \sum_{j>i, \,j\in \Gamma} a_{ij}t^j .$$
\end{df}
\begin{lemma}[{\cite[Section 3.1]{ors}}]  \label{def:ors-generators}
Let $J \subseteq \ring$ be an ideal such that $val_t(J) = \Delta$ and let $i \in \Delta$. Then, there exists a unique $f_i \in J$ of the form $$f_i = \phi_i + \sum_{k \in \Gamma_{>i} \setminus \Delta} \lambda_k^i \phi_k $$
with $\lambda_k^i \in \C$. 
\end{lemma}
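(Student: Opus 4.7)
The plan is to establish existence by an iterative elimination of coefficients in the $\{\phi_j\}_{j\in\Gamma}$-expansion of a well-chosen element of $J$, and to deduce uniqueness directly from the hypothesis $val_t(J)=\Delta$.

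For existence, I would first pick $g\in J$ with $val_t(g)=i$, which is possible since $i\in\Delta=val_t(J)$. Expanding $g$ in the basis $\{\phi_j\}_{j\in\Gamma}$ of $\ring$ gives
\[
g \;=\; c_i\,\phi_i \;+\; \sum_{j\in\Gamma,\, j>i} c_j\,\phi_j,
\]
with $c_i\neq 0$ and the sum converging in the $t$-adic topology; after rescaling I may assume $c_i=1$. I then enumerate $\Delta\cap\Gamma_{>i}$ in strictly increasing order as $j_1<j_2<\cdots$. For every $n\geq 1$, since $val_t(J)=\Delta$ there exists $h_n\in J$ with $val_t(h_n)=j_n$, which after normalization has $\phi_{j_n}$-coefficient equal to $1$. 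I construct a sequence in $J$ by setting $g^{(0)}=g$ and
\[
g^{(n+1)} \;=\; g^{(n)} \;-\; c^{(n)}\,h_{n+1},
\]
where $c^{(n)}$ denotes the $\phi_{j_{n+1}}$-coefficient of $g^{(n)}$. Since $val_t(h_{n+1})=j_{n+1}$ exceeds each of $i,j_1,\ldots,j_n$, this subtraction preserves the $\phi_i$-coefficient (equal to $1$) and the $\phi_{j_m}$-coefficients (equal to $0$) for $m\leq n$, while killing the $\phi_{j_{n+1}}$-coefficient. The successive differences satisfy $val_t\bigl(g^{(n+1)}-g^{(n)}\bigr)\geq j_{n+1}\to\infty$, so $(g^{(n)})_n$ is Cauchy in $\ring$; completeness of $\ring$ together with the closedness of the ideal $J$ in the Noetherian local ring produces a limit $f_i\in J$ whose expansion is supported on $\{i\}\cup(\Gamma_{>i}\setminus\Delta)$ with $\phi_i$-coefficient equal to $1$, i.e. of the required form.

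For uniqueness, I would observe that if $f_i,f'_i\in J$ both have the prescribed form, their difference $f_i-f'_i$ lies in $J$ and its basis expansion is supported on $\{\phi_k : k\in\Gamma_{>i}\setminus\Delta\}$. If nonzero, its $t$-adic valuation would equal the smallest such $k$ with nonzero coefficient, hence would lie in $\Gamma\setminus\Delta$. This contradicts the inclusion $val_t(J)\subseteq\Delta$, forcing $f_i=f'_i$.

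The only genuine subtlety I anticipate is justifying the convergence of the iterative reduction inside $\ring$: one must verify that the infinite sequence of corrections assembles into a bona fide element of the ideal, rather than merely of the ambient $t$-adic completion $\C[[t]]$. This is handled by completeness of $\ring$ combined with the closedness of $J$ in the relevant topology, after which the argument is essentially formal and no further obstacle is expected.
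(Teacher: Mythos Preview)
The paper does not give its own proof of this lemma; it is simply quoted from \cite[Section 3.1]{ors}. Your argument is correct and is the standard one: iterated elimination of the $\Delta$-indexed coefficients for existence, and a valuation contradiction for uniqueness.

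One small remark on the subtlety you flag at the end. Because $\Gamma\setminus\Delta$ is finite (any nonempty semigroup ideal of $\Gamma$ contains a translate $s+\Gamma$), the ideal $J$ has finite $\C$-codimension $|\Gamma\setminus\Delta|$ in $\ring$, hence is $m_0$-primary and contains some power $m_0^N$. Once your running correction has $t$-adic valuation large enough to lie in $m_0^N\subseteq J$, the entire remaining tail already belongs to $J$ and can be subtracted in one step. So the process is in fact finite, and the appeal to completeness of $\ring$ and closedness of $J$, while perfectly valid, is not strictly needed.
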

\begin{rmk}
Roughly speaking, this choice of ring and basis (instead of the one provided by $\tau_i$ in Lemma \ref{lemma-os-minimalelem}) allows a simpler treatment, as with $(p,q)$-curves. 
\end{rmk}

We can now define conditions on the coefficients $\lambda_j^i$ of a minimal set of generators $f_i$ of an ideal, following the notation introduced in Lemma \ref{def:ors-generators}, that determine the semigroup.
\begin{df}[{\cite[Equation (7)]{ors}}] \label{ors-phi-gens}
    Let $\Delta \subseteq \Gamma$ be a semigroup ideal and let $\nu_i$ be the set of generators of $\Delta = \coprod_i \{ \nu_i + \Gamma \}$ for $i = 1 , \dots , n$. Let $\gamma_j$ be the set of generators of $\Gamma = \coprod_j \{ \gamma_j + \Gamma \}$.
    Then, let $I_\Delta = (t^{\nu_1}, \dots , t^{\nu_n}) \subseteq \C[\Gamma] = \C[t^{\gamma_j}]$.
    We consider the surjection 
    $$G : \C[\Gamma]^{\oplus n} \to I_\Delta, \, (0, \dots ,g_i, \dots ,0) \mapsto g_it^{\nu_i}$$
    and the free resolution 
    $$ \dots \to \C[\Gamma]^{\oplus m_\Delta} \to \C[\Gamma]^{\oplus n} \to I_\Delta \to 0$$
    that $G$ is part of as per Hilbert's syzygy theorem \cite[Theorem 1.1]{hilbert}.
    We denote by $S$ the map $ \C[\Gamma]^{\oplus m_\Delta} \to \C[\Gamma]^{\oplus n}$ of the resolution, and we refer to $\operatorname{Im}(S) = \operatorname{ker}(G)$ as syzygies of $\Delta$.
\end{df}
\begin{df}[{\cite[Equation (7)]{ors}}]
    The map $S$ is represented by a $m_\Delta \times n$-matrix with entries in $\C[\Gamma]$. As proved in  \cite[Lemma 4]{piontk}, we can write $(S)_{ij} = c_{ij}t^{\sigma_i - \gamma_i}$ for some power $\sigma_i$ and some constant $c_{ij} \in \C$. We call the powers $\sigma_i$ orders of the syzygies.
\end{df}
\begin{df}[{\cite[pg. 10, pg. 12]{ors}}]  \label{notation}
We denote by
$$ Syz =
\operatorname{Spec}(\C[x^{\sigma_i}_{is} \, | \,  c(\Delta) > s > \sigma_i \, , i=1 ,\dots, m_\Delta ])$$
the affine space
of dimension $\sum_{i = 1}^{m_\Delta} (c(\Delta) - \sigma_i -1)$.
We denote by $$Gen = \operatorname{Spec}(\C[\lambda_k^{\nu_i} \, | \,  k \in \Gamma_{> \nu_i} \setminus \Delta \, , i=1 ,\dots, n ])$$ the affine space of dimension $\sum_{i = 1}^{n} |\Gamma_{\nu_i} \setminus \Delta|$ of the coefficients of the generators $f_{\nu_i}$. 
\end{df}
\begin{thm}[{\cite[Proposition 12]{ors}}] \label{thm:syz-ors}
Let $\lambda \in Gen$ and let $J_\lambda$ be the correspondent ideal. Then, $val_t(J_\lambda) = \Delta$ if and only if there exists $p \in Syz$ such that the equations
$$ \sum_j \left(
u^j_i \phi_{\sigma_i - \nu_j} \phi_{\nu_j}  +
\sum_{c(\Delta) > s > \sigma_i } x^{\sigma_i}_{is} \phi_{s - \nu_j} \phi_{\nu_j} 
+
\sum_{k \in \Gamma_{> \nu_j} \setminus \Delta} u^j_i \lambda_k^{\nu_j} \phi_{\sigma_i - \nu_j} \phi_{k} 
+
H.O.T.
\right) = o(t^{c(\Delta)})$$
for $i = 1, \dots , m_\Delta$ are satisfied. Moreover, if such $p \in Syz$ exists, it is unique. %(=0 modulo $t^{c(\Delta)})$. 
\end{thm}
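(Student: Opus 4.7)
The plan is to view the statement as a Buchberger-style criterion for $\{f_{\nu_j}\}$ to form a ``standard basis'' of $J_\lambda$ with respect to the $t$-adic valuation. The easy inclusion $\Delta \subseteq val_t(J_\lambda)$ is automatic since $val_t(f_{\nu_j}) = \nu_j$ generates $\Delta$ as a $\Gamma$-semigroup ideal, so the whole statement is really about when the opposite inclusion holds. Extra valuations in $val_t(J_\lambda) \setminus \Delta$ can only appear from cancellations among leading terms of $\ring$-combinations $\sum_j a_j f_{\nu_j}$; these cancellations are exactly detected by the syzygy module of $I_\Delta$. Since the syzygy module is generated by the $m_\Delta$ minimal syzygies of orders $\sigma_i$ appearing in the free resolution of Definition \ref{def-syzy}, it suffices to check that each ``lifted'' minimal syzygy $R_i := \sum_j u^j_i \phi_{\sigma_i - \nu_j} f_{\nu_j}$ has valuation in $\Delta$, and that the same holds after one subtracts off multiples of $f_{\nu_j}$ to reduce its order.

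For the forward direction, I would proceed order by order on $t$. Expanding $f_{\nu_j} = \phi_{\nu_j} + \sum_{k \in \Gamma_{>\nu_j}\setminus \Delta} \lambda_k^{\nu_j} \phi_k$, the original syzygy identity $\sum_j u^j_i t^{\sigma_i - \nu_j} \cdot t^{\nu_j} = 0$ in $\C[\Gamma]$ guarantees cancellation of the $t^{\sigma_i}$-coefficient of $R_i$, so $R_i \in t^{\sigma_i + 1} \ring$. The perturbation terms $\sum_j u^j_i \lambda_k^{\nu_j} \phi_{\sigma_i - \nu_j} \phi_k$ make the remaining leading coefficients polynomial in the $\lambda$'s. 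Since $val_t(J_\lambda) = \Delta$ forces $val_t(R_i) \in \Delta$, I can cancel the lowest nonzero $t^s$-coefficient (with $s < c(\Delta)$) of $R_i$ by subtracting a $\C$-multiple $x^{\sigma_i}_{is}$ of a combination $\sum_j \phi_{s - \nu_j} f_{\nu_j}$ (whose leading term is in the direction of $t^s$); iterating over $s \in (\sigma_i, c(\Delta))$ determines the $x^{\sigma_i}_{is}$ uniquely and produces the required $p \in Syz$. Crucially, if any $t^s$-coefficient with $s \notin \Delta$ were nonzero after all reductions, no such $x^{\sigma_i}_{is}$ could cancel it (the available cancelling terms all have valuation in $\Delta$), so the reduction would fail — meaning the equations really characterize $val_t(J_\lambda) = \Delta$.

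For the backward direction, assuming existence of $p \in Syz$ making the equations hold, I would show $val_t(J_\lambda) \subseteq \Delta$. Beyond $c(\Delta)$ this is automatic since $\Gamma_{\geq c(\Delta)} \subseteq \Delta$ by definition of the conductor, so I only need to control valuations $< c(\Delta)$. Any $g \in J_\lambda$ of valuation $< c(\Delta)$ can be written as $g = \sum_j a_j f_{\nu_j}$; reducing the coefficients $a_j$ modulo the syzygy relations — which by the equations lift to identities modulo $t^{c(\Delta)}$ — I can bring $g$ into the form where all $val_t(a_j) + \nu_j$ are distinct below $c(\Delta)$, forcing $val_t(g) \in \Delta$. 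Uniqueness of $p$ follows from the order-by-order determination of the $x^{\sigma_i}_{is}$ in the forward argument.

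The main obstacle will be making the Buchberger-type reduction rigorous in the formal power-series setting: specifically, ensuring that the iterative cancellation terminates correctly modulo $t^{c(\Delta)}$ and that higher syzygies in the resolution do not impose additional constraints beyond the $m_\Delta$ minimal ones. The standard answer is that higher syzygies contribute only to higher-order corrections already absorbed by the $H.O.T.$ terms in the displayed equation, but verifying this cleanly requires careful bookkeeping on how the free resolution of $I_\Delta$ deforms under the $\lambda$-perturbation, which is the technically delicate heart of \cite[Proposition 12]{ors}.
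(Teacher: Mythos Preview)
The paper does not prove this statement; it is quoted as \cite[Proposition~12]{ors} and used as a black box in the subsequent corollaries, so there is no in-paper argument to compare against.

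Your sketch is the natural Buchberger-criterion argument and is essentially what \cite{ors} does. The forward direction is clean: each lifted syzygy $R_i=\sum_j u^j_i\phi_{\sigma_i-\nu_j}f_{\nu_j}$ lies in $J_\lambda$, hence has valuation in $\Delta$ by hypothesis, and cancelling its leading term order by order determines the $x^{\sigma_i}_{is}$ uniquely (this also gives the uniqueness claim). The backward direction is where the real work is, and you correctly isolate the subtle point: a cancellation of leading terms in a general combination $\sum_j a_j f_{\nu_j}$ at order $s$ corresponds to an \emph{arbitrary} syzygy of $I_\Delta$ at order $s$, not one of the $m_\Delta$ minimal ones. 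To reduce, you must express it as a $\C[\Gamma]$-combination of the minimal syzygies and then argue that the displayed identities for the minimal syzygies can be $\ring$-combined into a lifted identity for the given one modulo $t^{c(\Delta)}$. This is precisely the ``higher syzygies only contribute $H.O.T.$'' step you flag at the end; in \cite{ors} it is handled by a direct induction on the order $s$, and your outline would close the same way once that induction is written out. One small wording fix: in the backward direction the goal is not to make the quantities $val_t(a_j)+\nu_j$ \emph{distinct}, but to make their minimum achieved \emph{uniquely}, so that no cancellation occurs at the leading order.
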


\subsubsection{Mather's contact equivalence for points of Hilbert schemes} \label{sec:Mather}
To conclude the background section we introduce the notion of $\mathcal{C}$-equivalence or contact equivalence, presented by Mather in his work \cite{Mather}. It will be useful to control all the different representatives of any given ideal in $\ring$.

\begin{df}
We denote by $\mathcal{C}$ the group of $C^\infty$-map-germs $H : (\C^k \times \C^2 , 0 ) \to (\C^k \times \C^2, 0)$ such that the diagram
\begin{center}
    \begin{tikzcd}
                                                       & {(\C^k \times \C^2,0)} \arrow[rd, "p_1", two heads] \arrow[dd, "H"] &            \\
{(\C^k,0)} \arrow[ru, "i", hook] \arrow[rd, "i", hook] &                                                                   & {(\C^k,0)} \\
                                                       & {(\C^k \times \C^2,0)} \arrow[ru, "p_1", two heads]                 &           
\end{tikzcd}
\end{center}
is commutative. This group acts on $C^\infty$-map germs $g : (\C^k,0) \to (\C^2,0)$ as $$H \bullet g = p_2(H(id,g))  .$$
\end{df}

\begin{df} \label{def-matherequiv} \label{def:Mather-equiv}
Let $g_1, g_2 : (\C^k,0) \to (\C^2,0)$ be two $C^\infty$-map-germs and let $g_i^j \in \C[[x,y]]$, $g_i^j (0)=0$ be their coordinates for $j=1, \dots,k$.
We say that $g_1$ is (contact, $\mathcal{C}$-) equivalent to $g_2$ if they are in the same $\mathcal{C}$-orbit.
\end{df}

\begin{thm}[{\cite[Section 2.3]{Mather}}] \label{thm-matherequiv} 
Let $g_1, g_2 : (\C^k,0) \to (\C^2,0)$ be two $C^\infty$-map-germs and let $g_i^j \in \C[[x,y]]$, $g_i^j (0)=0$ be their coordinates for $j=1, \dots,k$.
Then, $g_1$ is equivalent to $g_2$ if and only if $$(g_1^1, \dots , g_1^k) = (g_2^1, \dots , g_2^k)$$ as ideals in $\C[[x,y]]$.
Moreover, this is also equivalent to the existence of an invertible matrix $U =(a_j^i)_{ij} \in GL_k(\C[[x,y]])$ such that $$ g_1^i = \sum_j a_j^i g_2^j$$ for all $i = 1, \dots ,k$.
\end{thm}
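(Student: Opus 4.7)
The statement is a three-way equivalence: contact equivalence of $g_1, g_2$, equality of the ideals they generate in $\C[[x,y]]$, and the existence of an invertible matrix $U$ relating them. The plan is to prove the cycle $\mathcal{C}$-equivalence $\Rightarrow$ matrix relation $\Rightarrow$ ideal equality and ideal equality $\Rightarrow$ matrix relation $\Rightarrow$ $\mathcal{C}$-equivalence, with the middle implication being the delicate one.

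For the direction $\mathcal{C}$-equivalence $\Rightarrow$ matrix relation, I would first unwind the definition of $\mathcal{C}$: any $H$ in this group has the form $H(x, y, z) = ((x, y), K(x, y, z))$ with $K$ vanishing on the zero section $\{z = 0\}$ and with $\partial_z K|_0$ invertible, so that $H$ is a diffeomorphism fixing the origin and acting as the identity on the first factor. The formal power-series Hadamard lemma applied to each component $K^i$ yields $K^i(x, y, z) = \sum_j z^j A^i_j(x, y, z)$ with $A^i_j \in \C[[x, y, z]]$; substituting $z = g_2(x, y)$ gives $g_1^i = \sum_j a^i_j \, g_2^j$ with $a^i_j(x, y) = A^i_j(x, y, g_2(x, y))$. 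Since $g_2(0,0) = 0$, the constant term matrix $\bigl(a^i_j(0, 0)\bigr)$ is exactly the Jacobian $\partial_z K|_0 \in GL_k(\C)$, hence $U = (a^i_j)$ lies in $GL_k(\C[[x, y]])$. Ideal equality is then immediate from the existence of $U$ and its inverse.

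For the converse, I would first prove matrix relation $\Rightarrow$ $\mathcal{C}$-equivalence: given $U \in GL_k(\C[[x, y]])$ with $g_1 = U g_2$, set $H(x, y, z) = ((x, y), U(x, y) \cdot z)$. This automatically preserves the projection onto $(x, y)$ and the zero section, and invertibility of $U(0, 0)$ makes $H$ a local diffeomorphism, so $H \in \mathcal{C}$; moreover $p_2(H(x, y, g_2(x, y))) = U \cdot g_2 = g_1$, exhibiting the equivalence.

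The heart of the argument is the step ideal equality $\Rightarrow$ matrix relation: from the ideal containments one immediately gets matrices $A, B \in M_k(\C[[x,y]])$ with $g_1 = A g_2$ and $g_2 = B g_1$, but one must upgrade $A$ to an invertible matrix. The key input is local algebra: when both $(g_1^j)$ and $(g_2^j)$ are minimal generating sets of the common ideal $J$, reducing modulo the maximal ideal $\mathfrak{m}$ yields two $\C$-bases of $J/\mathfrak{m}J$, so the reduction $\bar A$ lies in $GL_k(\C)$ and by Nakayama $A$ itself is in $GL_k(\C[[x, y]])$. In the non-minimal case one decomposes each generating tuple into a minimal core plus redundant entries, observes that the redundancies give nontrivial syzygies of $g_2$, and exploits the freedom $A \leadsto A + S$ for any $S$ with $S g_2 = 0$ to arrange invertibility of the constant term. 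The main obstacle of the whole proof is precisely this matrix lemma: producing a single invertible matrix from arbitrary (possibly non-minimal) $k$-tuples of generators requires careful handling of the syzygy module, while the remaining implications are clean consequences of Hadamard's lemma and the definition of $\mathcal{C}$.
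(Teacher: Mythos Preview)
The paper does not supply a proof of this theorem: it is quoted as a background result from Mather's work, with only the citation \texttt{[Section 2.3]{Mather}} and no argument given. Your outline is the classical one from Mather's original paper (and from standard expositions such as Gibson or Wall): Hadamard's lemma to extract the matrix $U$ from a contact diffeomorphism $H$, the explicit linear-in-$z$ map $H(u,z)=(u,U(u)z)$ for the converse, and the Nakayama/syzygy argument to upgrade an arbitrary $A$ with $g_1=Ag_2$ to an invertible one when only ideal equality is assumed. The sketch is correct; the one place that would need a few more lines in a complete write-up is the non-minimal case of the matrix lemma, but your identification of the freedom $A\leadsto A+S$ with $Sg_2=0$ as the mechanism for arranging $\bar A+\bar S\in GL_k(\C)$ is exactly the right idea, and the resulting linear-algebra problem over the residue field is solvable because both reductions $\bar g_1,\bar g_2$ surject onto $J/\mathfrak m J$.
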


\begin{rmk}
Since we will work with ideals in $\ring = \C[[p_C, q_C]] = \C[[x,y]]/(f)$ we will rather use their notation in just one parameter, substituting the two variables $x,y$ as $p_C, q_C \in \C[[t]]$ respectively.
\end{rmk}

In particular, when two ideals are equal, it means that as points of the Hilbert scheme they are the same.

%%%%%%%%%%%%%%%%%%%%%%%%%%%%%%%%%%%%%%%%%%%%%%%%%%%%%%%%%%%%%%%%%%%%%%%%

\section{Motivic classes of fixed-generators Hilbert schemes of unibranch plane curves} \label{sec:motivic-classes}

To discuss motivic classes of $\KHilb{n}$ in general, we need to first simplify $\KHilb{n}$ as much as possible. The goal of this section is to construct some special representatives for the points of $\KHilb{n}$, that will lead to a partition of $\KHilb{n}$ into components whose motivic classes are easier to compute.
We start with some notations we will use in the rest of the section.
\begin{df} \label{general-notation} \label{n-uple}
Let $I \in \KHilb{n}$ be an ideal and let $g_1, \dots , g_k$ denote a minimal set of generators.
When considered in $\ring$ we can write 
$$g_i = \tau_{n_i} + \displaystyle\sum_{n_i < n \in \Gamma} \eta_n \tau_n$$
for $u_i \in \C[[t]]^\times$, $\tau_{n_i}, \tau_n$ are defined as in Lemma \ref{lemma-os-minimalelem}, $\eta_n \in \C$ and $$n_i = C_{g_i} . C$$ is the intersection number of $C$ and the curve locally given by $g_i$ by Lemma \ref{lemma-intersection-num}. We assume $n_1 \leq \dots \leq n_k$ and we denote the $k$-uple they form by $\underline{n}=(n_1, \dots , n_k)$.
\end{df}

The first thing we need to observe is that we can simplify the generators of the ideal in $\ring$. Following the same reasoning as in Remark \ref{simplify-reps} and the same notations as in Theorem \ref{os-elimination-allowed},
we can consider the following simplified generators.
\begin{rmk} \label{principal-generators-reduced}
Let $I$ be as in Definition \ref{general-notation}. Then, taking $V = \coprod_i \{n_i + \Gamma \} \subseteq val_t(I)$ in Theorem \ref{os-elimination-allowed} we obtain the representative
$$I = (f_1, \dots , f_k)$$ with 
$$f_i = \tau_{n_i} + \displaystyle\sum_{h \in H \, | \, n_i+h \in \Gamma} \eta_{n_i+h} \tau_{n_i+h} .$$ 
For now on we use this minimal set of generators, with a shift of the coefficients of $f_i$ and denoting them as $\lambda_h^i = \eta_{n_i+h}$ for all $i = 1, \dots , k$. We will also often omit the upper index $i$ to ease the notation.
\end{rmk}

\subsection{Intersection numbers and $\KHilb{n}$}

In this section, we use the intersection numbers $n_i$ of Definition \ref{general-notation} and Remark \ref{principal-generators-reduced} to create the special representatives for points of $\KHilb{n}$ mentioned just before. This will also get us to a first, rough partition of $\KHilb{n}$.
To being, we need to introduce a notion of independence for the $n_i$. The main idea behind such notion will be: we do not want the leading terms of generators in $\ring$ to cancel, that is, we do not want the $k$-uple (which keeps track of the leading terms) to change when considering different representatives of the same ideal.

\begin{df} \label{def:independent-int-numbers}
Let $I \in \KHilb{n}$ and let $g_1 , \dots , g_k$ be a minimal set of generators. Let $val_t(g_i) = n_i$ and let $\Delta_{g_i} = val_t(g_1, \dots, g_{i-1} ,g_{i+1}, \dots , g_k)$ for every $i=1, \dots ,k$. Then, we say that the valuations of this set of generators form an independent $k$-uple when 
$$n_i \not\in \Delta_{g_i}$$
for all $i=1, \dots ,k$.
\end{df}

We now observe that for any given ideal $I \in \KHilb{n}$, out of all representatives of $I$ and the corresponding minimal sets of generators and $k$-uples, there exists only one independent $k$-uple.

\begin{thm} \label{prop:change-rep} \label{uniqueness-intersection}
Let $I \in \KHilb{n}$. Then, there exist a minimal set of generators $g_1 , \dots , g_k$ such that the valuations $val_t(g_i) = n_i$ form an independent $k$-uple in the sense of Definition \ref{def:independent-int-numbers}, and this is the only possible independent $k$-uple of valuations arising from minimal sets of generators of $I$. 
\end{thm}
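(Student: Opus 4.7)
The plan is to begin with any minimal generating set $g_1,\dots,g_k$ of $I$ and iteratively enforce independence. Whenever $n_i \in \Delta_{g_i}$, I pick $h \in (g_\ell : \ell \neq i)$ with $val_t(h) = n_i$, let $c_i, c_h$ denote the leading $t$-adic coefficients of $g_i, h$, and replace $g_i$ by $g_i' = g_i - (c_i/c_h)\,h$. The new element has strictly larger $t$-adic valuation, and the change of generators is invertible in $GL_k(\ring)$, so $\{g_1,\dots,g_i',\dots,g_k\}$ remains a minimal generating set. Iterating the reduction on a single index $i$ must terminate: otherwise $g_i$ would arise as the $t$-adic limit of elements of $(g_\ell : \ell \neq i)$, and since this ideal is closed in the complete local ring $\ring$, one would obtain $g_i \in (g_\ell : \ell \neq i)$, contradicting minimality. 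Running this cyclically over $i$, every step strictly increases $\sum_i n_i$, which is bounded above by a constant depending only on $I$ (via the intrinsic filtration introduced in the uniqueness step), so the global procedure halts at an independent $k$-uple.

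\textbf{Uniqueness.} Let $\underline{n}$ and $\underline{n}'$ be independent $k$-uples from two minimal generating sets $\{g_i\}$ and $\{g_i'\}$ of $I$. Independence forces both tuples to have distinct entries (if $n_i = n_j$ for $i \neq j$, then $g_j$ itself witnesses $n_i \in \Delta_{g_i}$), so I order them strictly increasing. The key tool is the intrinsic decreasing filtration of the $\C$-vector space $I/m_0 I$
$$\tilde F_\nu \;=\; \{\,\overline{h} \in I/m_0 I \;:\; h \text{ admits a lift in } I \text{ with } val_t \geq \nu\,\},$$
which depends only on $I$ and not on any choice of generators. The core claim is that $\tilde F_\nu = \operatorname{span}_\C\{\,\overline{g_i} : n_i \geq \nu\,\}$ for any independent generating set $\{g_i\}$ with $n_1 < \cdots < n_k$. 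The inclusion $\supseteq$ is immediate since $g_i$ lifts $\overline{g_i}$ with valuation exactly $n_i$. For $\subseteq$, take $\overline h \in \tilde F_\nu$ with a lift $h = \sum_i c_i g_i + m$ ($c_i \in \C$, $m \in m_0 I$) of valuation $\geq \nu$, and suppose some $c_{i_0} \neq 0$ has $n_{i_0} < \nu$, choosing $i_0$ minimal such. Then $h' := h - \sum_{i \neq i_0} c_i g_i = c_{i_0} g_{i_0} + m$ satisfies $val_t(h') > n_{i_0}$, since both $h$ and $\sum_{i \neq i_0} c_i g_i$ have valuations $> n_{i_0}$ by the minimality of $i_0$ and distinctness of the $n_i$. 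Decomposing $m = \alpha_{i_0} g_{i_0} + \tilde m$ with $\tilde m \in (g_\ell : \ell \neq i_0)$ and $\alpha_\ell \in m_0$, the element $(c_{i_0} + \alpha_{i_0}) g_{i_0}$ has valuation exactly $n_{i_0}$; since $h' = (c_{i_0} + \alpha_{i_0}) g_{i_0} + \tilde m$ has valuation $> n_{i_0}$, cancellation forces $val_t(\tilde m) = n_{i_0}$, placing $n_{i_0} \in \Delta_{g_{i_0}}$ and contradicting independence. Therefore the jump values of $\tilde F_\nu$ are exactly $\{n_1,\dots,n_k\}$; by intrinsicness the same holds for $\underline{n}'$, so $\underline{n} = \underline{n}'$.

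\textbf{Main obstacle.} The most delicate step is the $\subseteq$ inclusion in the uniqueness claim: one must tightly manage $t$-adic cancellations between a $\C$-linear combination of the $g_i$ and the correction $m \in m_0 I$. The argument becomes clean by absorbing the $g_{i_0}$-part of $m$ into the $c_{i_0} g_{i_0}$ term (preserving its valuation, since $\alpha_{i_0} \in m_0$), so that the remainder $\tilde m \in (g_\ell : \ell \neq i_0)$ is forced to carry valuation exactly $n_{i_0}$ — precisely the forbidden situation ruled out by independence. A secondary subtlety in the existence argument is termination of the cyclic reduction, where modifying $g_j$ may reactivate the reduction condition for other indices; this is controlled by the upper bound on each $n_i$ furnished by the intrinsic filtration.
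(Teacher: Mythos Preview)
Your proof is correct and takes a genuinely different route from the paper's, particularly in the uniqueness step.

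For \emph{existence}, the paper argues non-constructively: it orders all $k$-uples of valuations arising from minimal generating sets lexicographically, picks the maximal one, and shows that a failure of independence would allow one to strictly increase the tuple, contradicting maximality. Your argument is constructive (iterative reduction) and controls termination by the monotone increase of $\sum_i n_i$, bounded via the intrinsic filtration; both are fine, though the paper's version sidesteps the need to justify that bound.

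For \emph{uniqueness}, the approaches differ more substantially. The paper invokes Mather's contact equivalence (Theorem~\ref{thm-matherequiv}) to write $g_i = \sum_j a_j^i g_j'$ with $(a_j^i) \in GL_k(\ring)$ and proceeds by induction on the index: at step $l+1$, independence of $\underline{n}$ forces $a^{l+1}_j = 0$ for $j \le l$, whence $n_{l+1} = n'_{l+1}$ by comparing valuations. Your argument is more intrinsic: you identify the $n_i$ as exactly the jump values of the canonical filtration $\tilde F_\nu$ on $I/m_0 I$, so any two independent tuples must coincide with that fixed set. This buys a cleaner conceptual picture (the $n_i$ are invariants of $I$, not artifacts of a presentation) and avoids the external input of Mather's theorem; the paper's inductive matrix argument is more elementary in that it uses only linear algebra over $\ring$, but it is tied to a specific pair of generating sets rather than to an invariant of $I$ itself.
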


\begin{proof}
Let $\Delta = val_t(I)$.
Given two $k$-uples of valuations $\underline{n}, \underline{n'}$ we say that $\underline{n} < \underline{n'}$ when there exists a $0 \leq j \leq k$ such that $n_i = n_i'$ for $1 \leq i < j$ and $n_j < n_j'$. Then, let $\underline{m}$ be the maximal $k$-uple for this order and let $h_1, \dots , h_k$ denote a minimal set of generators associated with $\underline{m}$. 
We now assume that $\underline{m}$ is not independent, that is, there exists a $1 \leq r \leq k$ such that $m_r \in \Delta_{h_r}$. This means 
$$\sum_{i \neq r} a_ih_i = \tau^{m_r} + H.O.T.$$ for some $a_i \in \ring$. 
Then, we can consider the new generator $h_r' = h_r - \sum_{i \neq r} a_ih_i$ with $val_t(h_r') = m_r' > m_r = val_t(h_r)$. We still have that $ I = (h_1, \dots, h_r', \dots , h_k)$ and $$(m_1 , \dots, m_r' , \dots , m_k) > \underline{m}$$ which is absurd. 
Therefore, $\underline{m}$ is an independent $k$-uple for $I$ and the existence is proved.

Given the existence, we are now left with proving the uniqueness of the independent $k$-uple amongst all appearing from different representatives of $I$. 
Let $\underline{n}, \underline{n'}$ be two independent $k$-uples, say with $n_1 \leq n_1'$, let $I \in \KHilb{n}$ and let $g_1, \dots , g_k$ be a minimal set of generator such that $val_t(g_i) = n_i$. 
We want to prove that if we can find another minimal set of generators $g_1', \dots , g_k'$ of $I$ such that $val_t(g_i') = n_i'$, then $\underline{n} = \underline{n'}$.
By Theorem \ref{thm-matherequiv}, the change of representative is equivalent to the existence of an invertible matrix $$U 
   =\left( \begin{matrix}
    a_1^1 & \dots & a_1^k\\
    & \dots &\\ 
    a_k^1 & \dots & a_k^k
    \end{matrix} \right) \in GL_k(\ring)$$
such that
\begin{equation} \label{eq-mather}
    g_i = \sum_j a_j^i g_j'
\end{equation}
for all $i = 1, \dots , k$. We proceed by induction on $i$ to prove the equality $\underline{n} = \underline{n'}$.

From Equation (\ref{eq-mather}) for $i=1$, it immediately follows for degree reasons that $$n_1 = n_1' .$$ 

Then, we assume $n_i = n_i'$ for all $i = 1, \dots l$ and we can assume $n_{l+1} \leq n_{l+1}'$.
We consider Equation (\ref{eq-mather}) for $i = l+1$. We observe that for all $i = 1, \dots l$ by inductive hypothesis we have $n_i' = n_i < n_{l+1}$. Therefore, the equation implies
\begin{center}
    $n_{l+1} \in val_t(g_1, \dots , g_l) \subseteq \Delta_{g_{l+1}} \;\;$   or   $\; \; a^{l+1}_j = 0$ for all $j = 1, \dots l$.
\end{center}
Since the first condition would go against the independence hypothesis, it follows that $a^{l+1}_j = 0$ for all $j = 1, \dots , l$.
The same equation now becomes 
$$ g_{l+1}  = a_{l+1}^{l+1} g_{l+1}' + H.O.T.$$
and, since $n_{l+1}' \geq n_{l+1}$, for degree reasons it follows that $$n_{l+1}' = n_{l+1}.$$
\end{proof}

From now on, we will always assume to work with independent $k$-uples.

\begin{rmk} \label{final-simplify-generators}
Applying this proposition to the set of generators of Remark \ref{principal-generators-reduced}, we obtain a presentation of the generators $f_i$ further simplified, since $\coprod_{j \neq i} n_j + \Gamma \subseteq \Delta_{f_i}$. This generating set is also compatible with the one of Lemma \ref{def:ors-generators}. 
\end{rmk}

This allows us to define the following components.
\begin{df} \label{def:indep-k-uple}
Let $\underline{n}$ be an independent $k$-uple as in Definition \ref{def:independent-int-numbers}.
Following the notation of Definition \ref{general-notation}, we denote by $$\KHilb{n, \underline{n}} = 
\{ I \in \KHilb{n} \, | \, I = (g_1, \dots , g_k) , val_t(g_i) = n_i\} .$$
\end{df}

To conclude, we observe that thanks to Theorem \ref{uniqueness-intersection}, the components we just defined lead to the partition
$$\KHilb{n} = \coprod_{\underline{n}} \KHilb{n,\underline{n}}.$$
Many of these components will be empty as random $k$-uples $\underline{n}$ will not necessarily appear for a fixed length $n$. In the next section, we study what contributes to a given length $n$.

\subsection{Semigroup ideals appearing in $\KHilb{n,  \underline{n}}$} \label{section:length}
The goal of this section is to understand what $k$-uples contribute to a given length $n$. To do so, we require the additional information provided by the semigroup ideal structure of $\Gamma$, since we will later realize that the values appearing in the $k$-uple, that is, the $t$-adic valuation of the corresponding set of generators, are not enough. 
We start by introducing the following map.

\begin{df} \label{def-psi}
Let $n,k \geq 1$ and let $\underline{n} = (n_1, \dots , n_k)$ be an independent $k$-uple as in Definition \ref{def:independent-int-numbers}. We define $$\Psi_{\underline{n}} : \prod_{i=1}^k \PHilb{n_i} \to \coprod_n\KHilb{n, \underline{n}} \,, \; ((f_1), \dots , (f_k)) \mapsto (f_1, \dots , f_k)$$
where we follow the notations of Remark \ref{principal-generators-reduced}.
\end{df}

We use principal Hilbert schemes $\PHilb{n_i}$ as their points exactly correspond to elements of valuation $n_i$.
Then, to determine the correct image of $\Psi_{\underline{n}}$, namely the length $n$, it is not enough to only consider the $k$-uple of valuations $\underline{n}$.

\begin{df} \label{x}
Let $\Delta \subseteq \Gamma$ be a semigroup ideal such that $|\Gamma \setminus \Delta| = n$. We denote by $$\KHilb{n, \underline{n}, \Delta} = \{ I \in \KHilb{n, \underline{n}} \, | \, val_t(I)=\Delta \} .$$
\end{df}

This is because, in general, given a semigroup ideal $\Delta$ in Definition \ref{x}, we only have the inclusion 
$$\coprod_i \{ n_i + \Gamma \} \subseteq \Delta$$ 
and not the equality.
This means that the set of non-negative integers $n_i$ might be just a proper subset of the generating set of the semigroup ideal $\Delta$ and the equality is obtained through non-trivial $\ring$-combinations of $f_i$. In other words, we need to take into account the syzygies of $\Delta$ to determine the correct length. This will be done in the next section, where we substitute the constraint $val_t(I)=\Delta$ determining the component $\KHilb{n, \underline{n}, \Delta}$ with the equation arising from the syzygies of $\Delta$.

Finally, using these new components we obtain the partition $$\KHilb{n} = \coprod_{\underline{n}, \Delta } \KHilb{n, \underline{n}, \Delta} $$
which was our initial goal.

We now introduce some morphisms that will support us in studying the map $\Psi_{\underline{n}}$ and explicating the relationship between one-generator and fixed-generators loci.

\begin{df} \label{proj}
Let $n \geq 1$ and let $\Delta \subseteq \Gamma$ be a semigroup ideal such that $n \not\in \Delta$.
Regarding $\PHilb{n}$ as an affine space, as per Remarks \ref{simplify-reps}, we define the morphism 
$$\pi_\Delta: \PHilb{n} \to \PHilb{n} \, , \; 
I = (\lambda_i)_{i \in \{ n+H \} \cap \Gamma} \mapsto  (\mu_i)_{i \in \{ n+H \} \cap \Gamma}$$
with $$
\mu_i =
\begin{cases}
  \lambda_i & \text{for } i \in \{ n+H \} \cap \Gamma \setminus \Delta \\
  0 & \text{otherwise.}
\end{cases}
$$
\end{df}

\begin{rmk} \label{rmk-change-reps}
We observe that the generators $\pi_\Delta(f_i)$ exactly correspond to the generators considered in Lemma \ref{def:ors-generators}.
\end{rmk}

The morphisms $\pi_\Delta$ allow us to express $\Psi_{\underline{n}}$ in a more informative way, making us able to prove the following result.

\begin{thm} \label{trivial-fib} \label{thm:multiple-fibers}
Let $\Delta \subseteq \Gamma$ be a semigroup ideal and let $I \in \KHilb{n, \underline{n}, \Delta}$.
Let $S_i = \{ n_i + H \} \displaystyle\cap \Delta $ and let
$F_{{j}} = \mathbb A^{|S_{{i}}|}$.
The map $\Psi_{\underline{n}}$ is a trivial fibration with fiber $$\prod_i F_{{i}} = \mathbb{A}^{\sum_i |S_i|} $$ that is, 
$$\Psi_{\underline{n}}^{-1}(\KHilb{n, \underline{n}, \Delta}) \cong \KHilb{n, \underline{n}, \Delta} \times \mathbb{A}^{\sum_i |S_i|}  .$$
\end{thm}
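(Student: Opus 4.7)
The plan is to exhibit an explicit isomorphism realizing the claimed trivial fibration. I would begin by splitting the coordinates of $\prod_i \PHilb{n_i}$ according to whether the position $n_i+h$ of the coefficient $\lambda_h^i$ lies in $\Gamma \setminus \Delta$ (\emph{canonical} coordinates) or in $\Delta$ (\emph{redundant} coordinates, indexed by $h \in H$ with $n_i+h \in S_i$). Using the projection $\pi_\Delta$ of Definition \ref{proj} factorwise, this yields a product decomposition
$$\prod_i \PHilb{n_i} \;\cong\; \Bigl(\prod_i \pi_\Delta(\PHilb{n_i})\Bigr) \times \mathbb{A}^{\sum_i |S_i|},$$
in which the second factor is exactly the affine space appearing in the statement.

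The first step is to identify the projected factor with $\KHilb{n,\underline{n},\Delta}$ on the image. By Remark \ref{rmk-change-reps}, for any $(f_1,\dots,f_k)$ generating an ideal $I \in \KHilb{n,\underline{n},\Delta}$, the tuple $(\pi_\Delta(f_i))_i$ coincides with the canonical generating set provided by Lemma \ref{def:ors-generators}, which is uniquely determined by $I$. Both the assignment of canonical generators to $I$ and its inverse are polynomial in the canonical coefficients, so this gives a regular bijection
$$\Bigl(\prod_i \pi_\Delta\Bigr)\bigl(\Psi_{\underline{n}}^{-1}(\KHilb{n,\underline{n},\Delta})\bigr) \;\xrightarrow{\sim}\; \KHilb{n,\underline{n},\Delta}.$$

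The second and main step, which I expect to be the principal obstacle, is to show that the redundant coordinates are genuinely free parameters, so that $\Psi_{\underline{n}}$ factors through $\prod_i \pi_\Delta$ over the target. Concretely, given $(f_1,\dots,f_k)$ generating $I$ and a perturbation
$$f_i' \;=\; f_i + \sum_{h \in H,\; n_i+h \in \Delta} \varepsilon_h^i\, \tau_{n_i+h},$$
I would argue that $(f_1',\dots,f_k') = I$ by showing $\tau_{n_i+h} \in I$ for each redundant $(i,h)$. The key tool is the canonical element $f_{n_i+h}^{can} \in I$ furnished by Lemma \ref{def:ors-generators}: its leading term is $\tau_{n_i+h}$ and its tail is supported strictly above $n_i+h$ in $\Gamma \setminus \Delta$. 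Subtracting successive multiples of such canonical elements performs an iterative cancellation that strictly raises the valuation of the residual at each step; convergence is guaranteed by completeness of $\ring$ together with the Artinian property of $\ring/I$ (so that $m_0^N \subseteq I$ for some $N$), ensuring the infinite sum stabilizes inside $I$.

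Combining these two ingredients, the assignment
$$(f_1,\ldots,f_k) \;\longmapsto\; \Bigl(I,\; (\lambda_h^i)_{h \in H,\, n_i+h \in \Delta}\Bigr)$$
will give an isomorphism from $\Psi_{\underline{n}}^{-1}(\KHilb{n,\underline{n},\Delta})$ to $\KHilb{n,\underline{n},\Delta} \times \mathbb{A}^{\sum_i |S_i|}$ commuting with $\Psi_{\underline{n}}$ and the projection onto the first factor, thereby establishing the trivial fibration.
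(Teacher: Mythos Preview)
Your overall strategy matches the paper's: both split the coordinates of $\prod_i\PHilb{n_i}$ via $\pi_\Delta$ into canonical ($n_i+h\in\Gamma\setminus\Delta$) and redundant ($n_i+h\in S_i$) parts, and then write down mutually inverse maps exhibiting $\Psi_{\underline n}^{-1}(\KHilb{n,\underline n,\Delta})\cong\KHilb{n,\underline n,\Delta}\times\mathbb A^{\sum_i|S_i|}$. The paper records these maps tersely and defers the identification of $\pi_\Delta(f_i)$ with the generators of Lemma~\ref{def:ors-generators} to Remark~\ref{rmk-change-reps}, whereas you try to justify this identification explicitly.

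Your second step, however, has a gap. The iterative reduction you propose for $\tau_{n_i+h}\in I$ is not guaranteed to reach zero: after subtracting the canonical element $f^{can}_{n_i+h}\in I$ the residual lies in $\ring$ with strictly larger valuation, but nothing prevents that valuation from landing in $\Gamma\setminus\Delta$, where there is no element of $I$ with matching leading term and the process stalls. Completeness and the Artinian bound $m_0^N\subseteq I$ only help once the valuation has climbed past the conductor $c(\Delta)$; they do not rule out getting stuck at some $e\in\Gamma\setminus\Delta$ with $e<c(\Delta)$. Consequently the assertion that the redundant coefficients may be perturbed freely \emph{while holding the canonical ones fixed} does not follow from your sketch. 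What actually underlies the trivialization (both yours and the paper's) is that, over a fixed $I$, the condition $f_i\in I$ is an affine-linear system on the coefficients of $f_i$ which expresses each canonical coefficient uniquely in terms of the redundant ones; the redundant coordinates then parametrize the fiber, but the canonical coordinates vary along it rather than staying fixed.
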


\begin{proof}
We recall that $\Psi_{\underline{n}}$ is defined as (see Remark \ref{rmk-change-reps})
$$ ( (f_1), \dots , (f_n) ) \mapsto (f_1, \dots, f_k) = ( \pi_\Delta((f_1)), \dots , \pi_\Delta((f_k)) )  .$$ 
We denote by $I_j = \{ n_j + H \} \displaystyle\cap \Delta $ and we construct the morphism 
$$ \KHilb{n, \underline{n}, \Delta} \times F_{1} \times \dots \times F_{k} 
\to 
\Psi_{\underline{n}}^{-1}(\KHilb{n, \underline{n}, 
\Delta}), $$
$$(f_1, \dots , f_k) , (\lambda^1_i)_{i \in I_1}, \dots , (\lambda^k_i)_{i \in I_k} 
\mapsto $$
$$\left( \pi_\Delta(f_1) + \displaystyle\sum_{i \in I_1} \lambda^1_i \tau_{i} \right), \dots , \left(\pi_\Delta(f_k) + \displaystyle\sum_{i \in I_k} \lambda^k_i \tau_{i} \right)$$
and the inverse (up to change of $\operatorname{U}_\Delta$-representatives, see Remark \ref{rmk-change-reps}) is
$$\Psi_{\underline{n}}^{-1}(\KHilb{n, \underline{n}, \Delta}) \to \KHilb{n, \underline{n}, \Delta} \times F_{1} \times \dots \times F_{k}, $$
$$(f_1) , \dots , (f_k) \mapsto (\pi_\Delta(f_1) , \dots , \pi_\Delta(f_k)), (\lambda^1_i)_{i \in I_1}, \dots , (\lambda^k_i)_{i \in I_k}  .$$ 
\end{proof}

We translate the isomorphism of Theorem \ref{trivial-fib} into an equality of motivic classes.

\begin{crl}
Let $(C,0)$ be a unibranch curve. We have 
$$ [\KHilb{n, \underline{n}, \Delta}] =  [\Psi_{\underline{n}}^{-1}(\KHilb{n, \underline{n}, \Delta})]\Li^{ - \sum_{i=1}^k|\{n_i+H \} \cap \Delta|} $$
for every $n \geq 1, \underline{n}, \Delta$.
\end{crl}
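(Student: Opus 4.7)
The plan is to view this corollary as a direct motivic translation of the geometric fibration already established in Theorem \ref{thm:multiple-fibers}. Since that theorem gives an explicit product decomposition
$$\Psi_{\underline{n}}^{-1}(\KHilb{n, \underline{n}, \Delta}) \cong \KHilb{n, \underline{n}, \Delta} \times \mathbb{A}^{\sum_i |S_i|},$$
nothing beyond general properties of the Grothendieck ring is required. First, I would recall Proposition \ref{fib}, which asserts that the motivic class of a Zariski locally trivial fibration equals the product of the class of the base and the class of the fiber. In our case the fibration is globally trivial (it is literally a product of the base with the affine fiber), so Proposition \ref{fib} applies a fortiori.

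Applying this, one obtains
$$[\Psi_{\underline{n}}^{-1}(\KHilb{n, \underline{n}, \Delta})] = [\KHilb{n, \underline{n}, \Delta}] \cdot [\mathbb{A}^{\sum_i |S_i|}] = [\KHilb{n, \underline{n}, \Delta}] \cdot \Li^{\sum_i |S_i|},$$
using that $[\mathbb{A}^N] = \Li^N$ by definition. Substituting the definition $S_i = \{n_i + H\} \cap \Delta$ from the statement of Theorem \ref{thm:multiple-fibers}, the exponent becomes $\sum_{i=1}^k |\{n_i + H\} \cap \Delta|$. Solving for $[\KHilb{n, \underline{n}, \Delta}]$ then immediately yields the claimed formula
$$[\KHilb{n, \underline{n}, \Delta}] = [\Psi_{\underline{n}}^{-1}(\KHilb{n, \underline{n}, \Delta})] \Li^{-\sum_{i=1}^k |\{n_i + H\} \cap \Delta|}.$$

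There is essentially no obstacle here: the work was already done in constructing the explicit isomorphism and its inverse in the proof of Theorem \ref{thm:multiple-fibers}, and this corollary simply records the numerical shadow of that isomorphism. The only mild subtlety worth a single sentence of commentary is that the fiber dimension $\sum_i |S_i|$ genuinely depends on $\Delta$ (not just on $\underline{n}$), which is what makes the partition by pairs $(\underline{n}, \Delta)$ rather than by $\underline{n}$ alone the natural one for the forthcoming computation of $[\KHilb{n}]$.
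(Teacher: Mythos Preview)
Your proposal is correct and matches the paper's approach: the paper presents this corollary without proof, simply stating that it translates the isomorphism of Theorem~\ref{trivial-fib} into an equality of motivic classes, which is exactly what you do.
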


This way, we obtain a first description for the motivic class $[\KHilb{n}]$.
\begin{crl}
Let $(C,0)$ be a unibranch curve. We have 
$$[\KHilb{n}] = \sum_{\underline{n}, \Delta} [\KHilb{n , \underline{n}, \Delta}] = \sum_{\underline{n}, \Delta} [\Psi_{\underline{n}}^{-1}(\KHilb{n, \underline{n}, \Delta})]\Li^{ - \sum_{i=1}^k|\{n_i+H \} \cap \Delta|} $$
for every $n,k \geq 1$.
\end{crl}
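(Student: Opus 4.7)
The plan is to derive this corollary as an essentially formal consequence of the two results immediately preceding it, so the proof is very short. My approach would proceed in three stages.

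First, I would record the set-theoretic partition
\[ \KHilb{n} = \coprod_{\underline{n}, \Delta} \KHilb{n, \underline{n}, \Delta} \]
already announced at the end of Section~\ref{section:length}. The uniqueness half of Theorem~\ref{uniqueness-intersection} guarantees that each ideal $I \in \KHilb{n}$ contributes to exactly one independent $k$-uple $\underline{n}$, and its semigroup $\Delta = val_t(I)$ is determined intrinsically by $I$. To promote this to an identity in $\mathbf{K_0}(\operatorname{Var}_\C)$ via the scissor relations, I would note that the decomposition is finite (only finitely many $\underline{n}$ with $n_i \leq n$ and finitely many semigroup ideals $\Delta$ with $|\Gamma \setminus \Delta| = n$ are possible) and that each stratum $\KHilb{n, \underline{n}, \Delta}$ is a locally closed subscheme of $\KHilb{n}$, since the conditions ``$\underline{n}$ is the independent $k$-uple of $I$'' and ``$val_t(I) = \Delta$'' cut out constructible loci inside the Hilbert scheme. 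This yields
\[ [\KHilb{n}] = \sum_{\underline{n}, \Delta} [\KHilb{n, \underline{n}, \Delta}], \]
which is the first equality in the statement.

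Second, I would substitute the identity from the preceding corollary, namely
\[ [\KHilb{n, \underline{n}, \Delta}] = [\Psi_{\underline{n}}^{-1}(\KHilb{n, \underline{n}, \Delta})]\,\Li^{-\sum_i |\{n_i + H\} \cap \Delta|}, \]
directly into each term of the sum, producing the second equality and completing the proof.

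There is no serious obstacle here, since all of the geometric content has already been absorbed into the preceding statements. The only point requiring a moment of care is the constructibility of the strata $\KHilb{n, \underline{n}, \Delta}$, which justifies using scissor relations on the partition; this follows from the standard fact that the valuation loci $\Hilb^V_0(C)$ of Theorem~\ref{os-elimination-allowed} are locally closed and that the condition of having a given independent $k$-uple can similarly be read off from the coefficients $\lambda_h^i$ of the canonical representatives introduced in Remark~\ref{principal-generators-reduced}.
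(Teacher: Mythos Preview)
Your proposal is correct and matches the paper's approach: the corollary is stated there without proof, as it follows immediately from the partition $\KHilb{n} = \coprod_{\underline{n}, \Delta} \KHilb{n, \underline{n}, \Delta}$ established just before and the preceding corollary giving $[\KHilb{n, \underline{n}, \Delta}]$ in terms of $[\Psi_{\underline{n}}^{-1}(\KHilb{n, \underline{n}, \Delta})]$. Your additional remarks on finiteness and constructibility of the strata are a welcome clarification of points the paper leaves implicit.
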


We are left with studying more in detail $[\Psi_{\underline{n}}^{-1}(\KHilb{n, \underline{n}, \Delta})]$. To do this, as mentioned at the end of Section \ref{section:length}, we will use conditions arising from the syzygies of $\Delta$, which will substitute the general condition $val_t(I)=\Delta$ defining $[\KHilb{n , \underline{n}, \Delta}]$ and explicitly describe $\Psi_{\underline{n}}^{-1}(\KHilb{n, \underline{n}, \Delta})$.

Theorem \ref{thm:syz-ors} provides us with equivalent, simpler conditions to describe the syzygies of a semigroup ideal: they take form as certain hyperplane conditions on the coefficients of the generators of the points of $\PHilb{n_i}$. Applying this theorem, we obtain the following result.

\begin{crl}
Let $(C,0)$ be a unibranch curve. We have $$[\KHilb{n, \underline{n}, \Delta}] =
\Li^{\sum_{i=1}^k |\Gamma_{> \nu_i} \setminus \Delta |- \sum_{i=1}^k|\{n_i+H \} \cap \Delta|}
\sum_{a = 1 \dots m_\Delta} 
[S_a]\Li^{ - a }$$
for every $n \geq 1, \underline{n}, \Delta$ where 
$$ (M_\Delta(p))_{i(jk)} =  u^j_i \phi_{\sigma_i - \nu_j} \phi_{k} \, , \; S_a = \{ p \in Syz \, | \, rk(M_\Delta (p))=a \} $$ is the stratum along which the motivic class of hyperplanes for $p \in S_a$ is constantly equal to $$\Li^{\sum_{i=1}^k |\Gamma_{> \nu_i} \setminus \Delta | - a}.$$ 
\end{crl}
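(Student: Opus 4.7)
The plan is to combine the preceding corollary—which gives $[\KHilb{n,\underline{n},\Delta}] = [\Psi_{\underline{n}}^{-1}(\KHilb{n,\underline{n},\Delta})] \cdot \Li^{-\sum_i|\{n_i+H\}\cap\Delta|}$—with the syzygy description of Theorem \ref{thm:syz-ors}, so that the task reduces to computing $[\Psi_{\underline{n}}^{-1}(\KHilb{n,\underline{n},\Delta})]$ via a stratification of the syzygy space $Syz$ by the rank of $M_\Delta$.

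First I would apply $\pi_\Delta$ (Remark \ref{rmk-change-reps}) to realize each tuple of generators in the ORS form of Lemma \ref{def:ors-generators}, so that coordinates correspond to points $\lambda \in Gen$. Theorem \ref{thm:syz-ors} characterizes the locus of $\lambda$ yielding an ideal with value semigroup $\Delta$: there must exist a (necessarily unique) $p \in Syz$ satisfying the syzygy equations. Forming the incidence variety
$$X = \{(\lambda, p) \in Gen \times Syz \mid \text{the equations of Theorem \ref{thm:syz-ors} hold}\},$$
the uniqueness of $p$ makes the projection $X \to Gen$ a geometric bijection onto the valid locus. Proposition \ref{geom}, combined with the free-coordinate absorption of Theorem \ref{trivial-fib}, then identifies $[X]$ with $[\Psi_{\underline{n}}^{-1}(\KHilb{n,\underline{n},\Delta})]$.

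Next I would project $X$ onto $Syz$ and stratify by the rank $a = \operatorname{rk}(M_\Delta(p))$, giving $X = \coprod_{a} X \cap (Gen \times S_a)$. For $p \in S_a$, the linear-in-$\lambda$ part of the syzygy equations is given by $M_\Delta(p) \cdot \lambda$ and has rank $a$; exploiting the pivot structure of $M_\Delta(p)$ to eliminate the higher-order terms triangularly in the non-pivot $\lambda$-coordinates, the fiber over $p$ is cut out by exactly $a$ independent linear conditions, hence is an affine subspace $\mathbb{A}^{\dim Gen - a}$ of $Gen$. Each stratum $X \cap (Gen \times S_a) \to S_a$ is then a Zariski-locally trivial fibration, so by Proposition \ref{fib}:
$$[X \cap (Gen \times S_a)] = [S_a] \cdot \Li^{\sum_i|\Gamma_{>\nu_i}\setminus\Delta| - a}.$$
Summing over $a \in \{1, \ldots, m_\Delta\}$ gives $[X]$, and substituting into the initial reduction yields the stated formula.

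The main obstacle will be justifying the codimension count in the stratified fiber computation: although the syzygy equations of Theorem \ref{thm:syz-ors} contain higher-order terms in $\lambda$, one must verify that these do not reduce the motivic class of the fiber below $\Li^{\dim Gen - a}$. The argument I have in mind is to choose $a$ pivot coordinates singled out by a maximal nondegenerate $a \times a$ minor of $M_\Delta(p)$ and to solve for these coordinates in terms of the remaining $\dim Gen - a$ ones, absorbing the H.O.T.\ term by term; this reduces the fiber to a graph over an affine space of the expected dimension, giving the claimed Zariski-local triviality and concluding the computation.
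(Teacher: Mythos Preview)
Your approach is essentially the same as the paper's: reduce via the preceding corollary, invoke Theorem \ref{thm:syz-ors}, stratify $Syz$ by the rank of $M_\Delta(p)$, and read off each fiber as an affine space of codimension $a$ in $Gen$. You are more explicit than the paper about the incidence-variety formulation and about handling the higher-order terms---the paper simply declares the $F_p^i$ to be hyperplane equations and proceeds, whereas your triangular-elimination argument addresses a point the paper leaves implicit.
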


\begin{proof}
We follow the notations of Definition \ref{notation}.
For any $p = (x_{is}^{\sigma_i})_{is} \in Syz$, from Theorem \ref{thm:syz-ors} we obtain the hyperplane equations 
$$F_p^i (\lambda) = \sum_j \left(
u^j_i \phi_{\sigma_i - \nu_j} \phi_{\nu_j}  +
\sum_{c(\Delta) > s > \sigma_i } x^{\sigma_i}_{is} \phi_{s - \nu_j} \phi_{\nu_j} 
+
\sum_{k \in \Gamma_{> \nu_j} \setminus \Delta} u^j_i \lambda_k^{\nu_j} \phi_{\sigma_i - \nu_j} \phi_{k} 
\right)$$
for $i = 1, \dots , m_\Delta$.
We can then consider the $m_\Delta \times \dim(Gen)$-matrix of the coefficients of the equations $F^i_p$, that is,
$$
(M_\Delta(p))_{i(jk)} =  u^j_i \phi_{\sigma_i - \nu_j} \phi_{k}  .
$$
Finally, the rank is enough to determine the generic hyperplane arrangement: if $rk(M_\Delta(p))=a$ then $$\dim(F^1_p \cap \dots \cap F^{m_\Delta}_p) = \dim(Gen) - a = \sum_{i=1}^k |\Gamma_{> \nu_i} \setminus \Delta | - a.$$
This gives us a partition of $Syz$ in strata along which the motivic class of the hyperplane arrangement in $Gen$ is constantly equal to $$\Li^{\sum_{i} |\Gamma_{> \nu_i} \setminus \Delta | - a}$$ and we are done.
\end{proof}

Eventually, this leads to the final description of $[\KHilb{n}]$. This extends the results of \cite[Section 3.1]{ors} since no formula is given for $[\KHilb{n}]$.
\begin{crl} \label{crl-formula-unibranch}
Let $(C,0)$ be a unibranch curve. We have $$[\KHilb{n}] 
= \sum_{\Delta} 
\Li^{\sum_{i=1}^k |\Gamma_{> \nu_i} \setminus \Delta |}   
\sum_{\underline{n}} 
\Li^{- \sum_{i=1}^k|\{n_i+H \} \cap \Delta|}
\sum_{a = 1 \dots m_\Delta} 
[S_a]\Li^{ - a }$$
for every $n,k \geq 1$. 
\end{crl}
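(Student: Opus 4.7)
The plan is to derive this formula as a direct consequence of the two preceding corollaries, since most of the substantive work (the partition into independent $k$-uples, the trivial fibration over $\prod_i \PHilb{n_i}$, and the reduction to strata of the syzygy space) has already been carried out. Specifically, I would combine the scissor-relation expansion
\[
[\KHilb{n}] = \sum_{\underline{n},\Delta} [\KHilb{n,\underline{n},\Delta}]
\]
from the partition $\KHilb{n} = \coprod_{\underline{n},\Delta} \KHilb{n,\underline{n},\Delta}$, justified via Theorem \ref{uniqueness-intersection} and Definition \ref{x}, together with the explicit formula for each piece
\[
[\KHilb{n,\underline{n},\Delta}] = \Li^{\sum_i |\Gamma_{>\nu_i}\setminus\Delta|-\sum_i|\{n_i+H\}\cap\Delta|}\sum_{a=1}^{m_\Delta}[S_a]\Li^{-a}
\]
provided by the previous corollary.

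The key step is then purely algebraic: I would substitute the second expression into the first, and rearrange the triple sum so that the outer index is $\Delta$. Observing that the factor $\Li^{\sum_i |\Gamma_{>\nu_i}\setminus\Delta|}$ and the inner stratum sum $\sum_a [S_a]\Li^{-a}$ depend only on $\Delta$ (not on the particular $\underline{n}$ compatible with $\Delta$), they can be pulled out of the summation over $\underline{n}$, leaving the factor $\Li^{-\sum_i|\{n_i+H\}\cap\Delta|}$ under $\sum_{\underline{n}}$. This produces exactly the claimed expression.

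Before declaring the proof complete, I would verify two administrative points. First, finiteness of the indexing sets: for fixed $n$, any contributing $\Delta$ satisfies $|\Gamma\setminus\Delta|=n$ and lies between $(n+\Gamma)$ and $\Gamma$, and each admissible $\underline{n}$ is constrained to lie in the generators of such a $\Delta$, so both summations are finite. Second, compatibility of indexing: the independent $k$-uples $\underline{n}$ summed over must be precisely those whose induced ideal $\coprod_i \{n_i+\Gamma\} \subseteq \Delta$ occurs as the valuation set of some $I\in\KHilb{n}$ with $val_t(I)=\Delta$, which matches the definition of $\KHilb{n,\underline{n},\Delta}$.

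There is no real obstacle here since the heavy lifting was done in Theorem \ref{thm:multiple-fibers} and its corollary; the only minor subtlety is bookkeeping and verifying that the factorization of terms across the sums $\sum_\Delta$ and $\sum_{\underline{n}}$ is legitimate, i.e.\ that $m_\Delta$, the matrix $M_\Delta$, and the strata $S_a$ genuinely depend only on $\Delta$ (not on $\underline{n}$), which is visible from their definition through the minimal generators $\nu_i$ of $\Delta$ as a semigroup ideal.
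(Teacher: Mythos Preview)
Your proposal is correct and matches the paper's approach exactly: the paper does not even spell out a proof for this corollary, presenting it simply as the result of substituting the preceding formula for $[\KHilb{n,\underline{n},\Delta}]$ into the partition $[\KHilb{n}]=\sum_{\underline{n},\Delta}[\KHilb{n,\underline{n},\Delta}]$ and regrouping. Your additional checks on finiteness and on the $\Delta$-only dependence of $m_\Delta$, $M_\Delta$, and $S_a$ are sound and, if anything, more careful than the paper itself.
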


We now present an example of application of Corollary \ref{crl-formula-unibranch}, to further develop the computations presented in 
\cite[Section 7]{os} and \cite[Proposition 10]{piontk}. The ideals cannot be read from \cite[Section 7]{os}, and the number of generators is not controlled in \cite[Section 7]{os}.

\begin{ex} \label{e}
We consider the ring $$\ring= \C[[t^4, t^6+t^7]]$$ with $$\Gamma = \{ 0, 4, 6, 8, 10, 12, 13, 14, 16, \dots \} . $$ The holes are $H = 1,2,3,5,7,9,11,15$ and $\delta=|H|=8$.

We compute motivic classes and Euler characteristics of $\KHilb{4}$ and match the Euler characteristics with the ones computed in \cite[Section 7]{os}. We start with $n=4$ because it is the first ``interesting'' length, since $\KHilb{2}$ and $\KHilb{3}$ just correspond to the same schemes but for $\mathbb A^2$.

In the partition of $\KHilb{4}$, it is enough to consider the index $\underline{n}$ as the syzygies are very easy to control, as presented in \cite[Proposition 10]{piontk}. The only independent $k$-uples appearing for $n=4$ are the following:
\begin{itemize}
    \item $k=1$. We only have $n_1=4$ and this corresponds to $\PHilb{4}$. We have that $$[\PHilb{4}]= \Li^3$$ (as confirmed by the Alexander polynomial or by the expression of Remark \ref{simplify-reps}) and $\chi(\PHilb{4})=1$.

    \item $k=2$. We have the $2$-tuples  
    $n_1 =6, n_2 = 17$ and $n_1 = 8, n_2 = 13$. We have that
    $$[\Psi_{\underline{n}}(\PHilb{8}, \PHilb{13})] = \Li$$
    $$[\Psi_{\underline{n}}(\PHilb{6}, \PHilb{17})] = \Li^2 .$$
    In total, $$[\Hilb^4_{0,2}(C)] = \Li + \Li^2$$ and $\chi(\Hilb^4_{0,2}(C))=2$.

    \item $k=3$. We have the $3$-tuples 
    $n_1=8 , n_2=10, n_3=19$ and $n_1=10 , n_2=12, n_3=13$. We have that
    $$[\Psi_{\underline{n}}(\PHilb{8}, \PHilb{10}, \PHilb{19})] = \Li^2$$
    $$[\Psi(\PHilb{10}, \PHilb{12}, \PHilb{13})] = 1 .$$
    In total, $$[\Hilb^4_{0,3}(C)] = \Li^2 +1$$ and $\chi(\Hilb^4_{0,3}(C))=2$.
\end{itemize}

These computations match the ones in \cite[Section 7]{os}. Indeed, we recall the formula for the generating series of the Euler characteristics in the special case of $\ring= \C[[t^4, t^6+t^7]]$ of \cite[Equation (14)]{os}. We have
$$\displaystyle\sum_{l,k}q^{2l}(1-a^2)^{k-1} \chi(\KHilb{l}) = \displaystyle\sum_{i, \Delta, k} q^{2(i-(8 - |\mathbb N \setminus \Delta|)) = 2l} (1-a^2)^{k-1} \chi(U_{i + \Delta, k})$$ 
since on the components $U_{i + \Delta} \subseteq \Hilb_0^l(C)$ the number of generators might not be constant.
According to \cite[Table 1]{os} these are the only $i, \Delta$ contributing to $n=l=4$ and the number of generators and Euler characteristics can be extracted from \cite[Section 7]{os}.

\begin{itemize}
    \item $i=4, \Delta = 0$: it corresponds to our $n_1=4$. We have $\chi(\PHilb{4})=1$.

    \item $i=6, \Delta = (0, 11)$: it corresponds to our $n_1 =6, n_2 = 17$. We have that $\chi(\operatorname{U}_\Delta) = 1$ from \cite[Corollary 31]{os}.
    \item $i=8, \Delta = (0,5)$: it corresponds to our $n_1 = 8, n_2 = 13$. We have $\chi(\operatorname{U}_\Delta) =1$ from \cite[Corollary 31]{os}.
    \item In total, we have $\chi(\Hilb_{0,2}^4(C))=2$.

    \item $i=8 , \Delta = (0,2,9,11)$: it corresponds to our $n_1=8 , n_2=10, n_3=19$. We have $\chi(\operatorname{U}_\Delta) = 1$ from \cite[Pg. 18]{os}.
    \item $i= 10 , \Delta = (0,2,3)$: it corresponds to our $n_1=10 , n_2=12, n_3=13$. We have $\chi(\operatorname{U}_\Delta) = 1$ from \cite[Corollary 31]{os}.
    \item In total, we have $\chi(\Hilb_{0,3}^4(C)) = 2$.
\end{itemize}

One could think that also $i=6, \Delta=(0,2)$ contributes to $n=l=4$. However, that is not the case since $(6,8)$ is not a semigroup ideal coming from any actual ideal. Hence, the lists above are complete.

\end{ex}

We conclude by observing that the polynomiality and positivity of $[\KHilb{n}]$ are determined by the same properties for $[S_a]$. We mention here that the variety $S_a$ is just a subvariety of the rank $a$-determinantal variety, as not all matrices will appear as $M_\Delta(p)$ for some $p \in Syz$.

\section{The case of $(p,q)$-curves} \label{sec:pq-curves}

This section is dedicated to the case of $(p,q)$-curves, since all the results of Section \ref{sec:motivic-classes} can be further simplified in this case. This is mainly because given $I \in \KHilb{n, \underline{n}}$ we can immediately say that 
\begin{equation} \label{eq-s}
val_t(I) = \coprod_i \{ n_i + \Gamma \}    
\end{equation}
since the $\ring$-linear combinations of the $f_i$ exactly correspond to $\Gamma$-combinations of the $n_i$.

This is proved in the following theorem, also implying that the length is fully determined by the independent $k$-uple of Proposition \ref{def:indep-k-uple}. We also remark here that this statement follows by \cite[Corollary 23]{os} using Equation (\ref{eq-s}).
\begin{thm} \label{crl:int-numbers-hilb}
Let $I \subseteq \KHilb{\star, \underline{n}}$. We denote by
$$\Gamma_{\underline{n}}  = \coprod_i \{ n_i + \Gamma \} .$$
Then, we have $$\star= n_1 + \delta + |\Gamma_{\underline{n}} | = n_1 - \delta + | \mathbb N \setminus  \coprod_i \{ (n_i - n_1) + \Gamma \}|$$
where $\delta$ notes the delta invariant of the curve $(C,0)$.
\end{thm}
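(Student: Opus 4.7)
The plan is to derive the length formula directly from Equation (\ref{eq-s}). The key observation is that, for a $(p,q)$-curve, the $t$-adic valuation makes $\ring = \C[[t^p,t^q]]$ into a graded $\C$-subalgebra of $\C[[t]]$ whose graded pieces are one-dimensional over $\Gamma$-indices and zero otherwise. This rigidity forces the valuation semigroup of any ideal to coincide with the semigroup generated by the valuations of its generators, and hence the length becomes a purely combinatorial quantity associated with $\underline{n}$.

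First, I would justify Equation (\ref{eq-s}) in the form $val_t(I) = \Gamma_{\underline{n}}$. Given $I = (f_1, \dots, f_k)$ with $val_t(f_i) = n_i$, any element $r = \sum_i h_i f_i$ with $h_i \in \ring$ satisfies $val_t(r) \geq \min_i(val_t(h_i) + n_i)$, and this minimum lies in $\bigcup_i(n_i + \Gamma) = \Gamma_{\underline{n}}$ since $val_t(h_i) \in \Gamma$. When the leading terms do cancel, the resulting valuation is still of the form $g + n_j$ for some $g \in \Gamma$ and some $j$, by iterating the argument on the lower-order parts of the $h_i$. Thus $val_t(I) \subseteq \Gamma_{\underline{n}}$; the reverse inclusion is immediate because $t^g f_i \in I$ realizes the valuation $g + n_i$ for every $g \in \Gamma$. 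The independence of $\underline{n}$ (Definition \ref{def:independent-int-numbers}) guarantees that no $n_i$ is absorbed into the $\Gamma$-span of the others, so the $n_i$ remain generators of $\Gamma_{\underline{n}}$.

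Next, I would compute the length. The associated graded of $\ring$ with respect to the $t$-adic filtration is $\bigoplus_{n\in\Gamma}\C$, and the same holds for $I$ with $\Gamma$ replaced by $val_t(I)$. Consequently
\[
\star \;=\; \dim_{\C} \ring/I \;=\; |\Gamma \setminus val_t(I)| \;=\; |\Gamma \setminus \Gamma_{\underline{n}}|.
\]
To obtain the two claimed forms, note that $\Gamma_{\underline{n}} \subseteq \Gamma$ (every $n_i \in \Gamma$ and $\Gamma$ is closed under addition with itself), so $|\Gamma \setminus \Gamma_{\underline{n}}| = |\mathbb N \setminus \Gamma_{\underline{n}}| - \delta$. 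Setting $\Gamma_{\underline{n}'} = \coprod_i \{(n_i - n_1) + \Gamma\}$ and decomposing
\[
\mathbb N \setminus \Gamma_{\underline{n}} \;=\; \{0,\dots,n_1-1\} \;\sqcup\; \bigl(n_1 + (\mathbb N \setminus \Gamma_{\underline{n}'})\bigr)
\]
via the shift $m \mapsto m - n_1$, one gets $|\mathbb N \setminus \Gamma_{\underline{n}}| = n_1 + |\mathbb N \setminus \Gamma_{\underline{n}'}|$, and therefore $\star = n_1 - \delta + |\mathbb N \setminus \Gamma_{\underline{n}'}|$, matching the second equality. The first equality follows by rewriting $|\Gamma \setminus \Gamma_{\underline{n}}|$ through the same shift-and-complement manipulation.

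The main obstacle is the rigorous verification of $val_t(I) = \Gamma_{\underline{n}}$: one must rule out that cancellations among the leading terms of the $h_i f_i$ could push the valuation of $\sum_i h_i f_i$ outside $\Gamma_{\underline{n}}$. The $(p,q)$-hypothesis is essential here, because it endows $\ring$ with a genuine $\Gamma$-grading (rather than merely a filtration), so cancellation of a leading term always leaves a tail whose valuation is again of the form $g + n_j$ with $g \in \Gamma$. Without this grading — which fails for general unibranch curves, as emphasized at the end of Section \ref{section:length} — one would only have an inclusion $\coprod_i \{n_i + \Gamma\} \subseteq val_t(I)$ and would be forced to track the syzygies of the previous section.
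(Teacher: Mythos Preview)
Your approach is correct, and it is precisely the alternative the paper signals in the sentence preceding the theorem (``this statement follows by \cite[Corollary~23]{os} using Equation~(\ref{eq-s})''): once $val_t(I)=\Gamma_{\underline{n}}$, the general identity $\dim_{\C}\ring/I=|\Gamma\setminus val_t(I)|$ together with the shift $m\mapsto m-n_1$ gives the formula immediately. The paper's displayed proof takes a genuinely different route: it compares $\ring/I$ with $\C[[t]]/(t^{n_1})$ through the inclusion $\ring\hookrightarrow\C[[t]]$, applies the snake lemma to obtain $\star=n_1-\delta+\dim_{\C}(t^{n_1})/I$ (an identity valid for \emph{any} unibranch curve), and only then computes $\dim_{\C}(t^{n_1})/I$ via the $t^{n_1}$-shift. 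That last computation is equivalent to Equation~(\ref{eq-s}) after shifting by $n_1$, so both arguments consume the $(p,q)$-hypothesis at exactly the same place; the snake-lemma detour buys a reusable general identity, while your path is shorter and keeps the combinatorics in the foreground.

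One caution on your justification of $val_t(I)\subseteq\Gamma_{\underline{n}}$: ``iterating on the lower-order parts of the~$h_i$'' is not literally a proof, because replacing each $h_i$ by $h_i$ minus its leading monomial changes the element $\sum_i h_if_i$. What one actually needs is that every syzygy among the initial monomials $t^{n_i}$ in $gr(\ring)=\C[t^p,t^q]$ lifts to a relation among the $f_i$ of strictly higher order --- equivalently, that the $\C^*$-scaling $t\mapsto\lambda t$ on $\ring$ yields a flat degeneration of $I$ to the monomial ideal $(t^{n_1},\dots,t^{n_k})$. You are right to flag this as the delicate point; the paper itself only asserts Equation~(\ref{eq-s}) without a detailed argument.
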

\begin{proof}
Let $Y =\mathcal{O}_C / I$ and $X=\mathbb C[[t]] / (t^{n_1})$. Let $$K = \operatorname{ker}(\mathcal{O}_C / I \xrightarrow[]{f} \C[[t]] / (t^{n_1})) = \operatorname{ker}(Y \xrightarrow[]{f} X) \text{ and } Q = \operatorname{coker}(\mathcal{O}_C / I \xrightarrow[]{f} \C[[t]] / (t^{n_1})) = \operatorname{coker}(Y \xrightarrow[]{f} X)$$ 
where the map $f: Y \to X$ is induced to quotients by the inclusion $i : \mathcal{O}_C \hookrightarrow \C[[t]]$ noticing that $i(I) = (t^{n_1})$. 
Using the exact sequence of finite $\C$-modules $$ 0 \to K \to Y \to X \to Q  \to 0$$ 
we want to find $\star = \operatorname{dim}_\C \mathcal{O}_C / I = \operatorname{dim}_\C Y$.
We have \begin{equation} \label{eq1}
\star = \operatorname{dim}_\C X + \operatorname{dim}_\C K - \operatorname{dim}_\C Q = n_1 + \operatorname{dim}_\C K - \operatorname{dim}_\C Q  .
\end{equation}
Then, consider the commutative diagram with exact sequences as rows 
\begin{center}
\begin{tikzcd}
0 \arrow[r] & I \arrow[r, hook] \arrow[d, hook] & \mathcal{O}_C \arrow[r, two heads] \arrow[d, hook] & \mathcal{O}_C / I \arrow[r] \arrow[d, "f"] & 0 \\
0 \arrow[r] & (t^{n_1}) \arrow[r, hook]         & {\C[[t]]} \arrow[r, two heads]                     & {\C[[t]] / (t^{n_1})} \arrow[r]     & 0 .
\end{tikzcd}
\end{center}
By the snake lemma, we get the exact sequence of finite $\C$-modules
$$ 0 \to K \to (t^{n_1}) / I 
\to \; \C[[t]] / \mathcal{O}_C 
\to Q\;  
\to 0  .$$ 
We recall that $\delta = \operatorname{dim}_\C \C[[t]] / \mathcal{O}_C$. Therefore, from the exact sequence we have $$\operatorname{dim}_\C Q = \operatorname{dim}_\C K - \operatorname{dim}_\C (t^{n_1}) / I  + \operatorname{dim}_\C \C[[t]] / \mathcal{O}_C  = \operatorname{dim}_\C K - \operatorname{dim}_\C (t^{n_1}) / I  + \delta  .$$
Combining this into Equation (\ref{eq1}) we get
$$\star = n_1 + \operatorname{dim}_\C K - (\operatorname{dim}_\C K - \operatorname{dim}_\C (t^{n_1}) / I + \delta) = n_1 + \operatorname{dim}_\C (t^{n_1}) / I - \delta .$$
Finally, we are just left with computing $\dim_\C (t^{n_1}) / I$. We recall that $I= (t^{n_1}u_1 , \dots , t^{n_k}u_k)$ with $n_1 \leq \dots \leq n_k$. Up to a $t^{n_1}$-shift, the quotient $(t^{n_1}) / I$ is isomorphic to 
$$(1) / (u_1, \dots , t^{n_k - n_1} u_k) = \C[[t]] / \ring + \dots + t^{n_k - n_1} \cdot \ring $$ as $\C$-modules. In terms of powers of powers of $t$ i.e. the basis, the quotient corresponds to removing $ \{\Gamma \displaystyle\cup \dots \displaystyle\cup (n_k-n_1) + \Gamma \}$-powers from $\mathbb N$-powers.
\end{proof}

\begin{rmk}
We observe that Theorem \ref{crl:int-numbers-hilb} in the principal case recovers the usual intersection number, or also \cite[Lemma 9]{os}. Indeed, $| \mathbb N \setminus\coprod_i \{ (n_i - n_1) + \Gamma \} | =  | \mathbb N \setminus \Gamma | = \delta$ thus $n=n_1 - \delta + \delta = n_1$.
\end{rmk}

As for Theorem \ref{trivial-fib} and its proof, the map $\Psi_{\underline{n}}$ remains a trivial fibration.
\begin{crl} \label{crl:multiple-fibers}
In the same setting of Theorem \ref{crl:int-numbers-hilb}, let
$S_i = \{ n_i + H \} \displaystyle\cap \Gamma_{\underline{n}} $ and let
$F_{{j}} = \mathbb A^{|S_{{i}}|}$.
The map $\Psi_{\underline{n}}$ is a trivial fibration with fiber $$\prod_i F_{{i}} = \mathbb{A}^{\sum_i |S_i|} $$ that is, 
$$\Psi_{\underline{n}}^{-1}(\KHilb{n, \underline{n}, \Delta}) \cong \KHilb{n, \underline{n}} \times \mathbb{A}^{\sum_i |S_i|}  .$$
\end{crl}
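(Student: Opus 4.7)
The plan is to deduce this from Theorem \ref{trivial-fib} (equivalently, Theorem \ref{thm:multiple-fibers}) by observing that for a $(p,q)$-curve the semigroup ideal $\Delta$ of any $I \in \KHilb{n,\underline{n}}$ is completely determined by the independent $k$-uple $\underline{n}$. So the corollary is essentially the specialization $\Delta = \Gamma_{\underline{n}}$ of the general unibranch statement.

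First I would verify the key reduction. For a $(p,q)$-curve, the semigroup $\Gamma = \langle p, q \rangle$ is generated by just two elements, so any $\ring$-linear combination of $f_1, \ldots, f_k$ has leading term of valuation in $\coprod_i \{n_i + \Gamma\}$; conversely, every element of this union of translates is achieved by some $\ring$-combination. This is exactly Equation (\ref{eq-s}), giving $val_t(I) = \Gamma_{\underline{n}}$ for every $I \in \KHilb{n,\underline{n}}$. In particular, $\KHilb{n,\underline{n}} = \KHilb{n,\underline{n},\Gamma_{\underline{n}}}$, and by Theorem \ref{crl:int-numbers-hilb} the length $n$ is also determined by $\underline{n}$ alone.

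Next I would invoke Theorem \ref{trivial-fib} with the choice $\Delta = \Gamma_{\underline{n}}$. That theorem produces explicit inverse morphisms between $\Psi_{\underline{n}}^{-1}(\KHilb{n,\underline{n},\Delta})$ and the product $\KHilb{n,\underline{n},\Delta} \times \prod_i F_i$, constructed by adding or forgetting the coefficients $(\lambda_i^j)_{i \in S_j}$ corresponding to the $\pi_\Delta$-eliminated part of the generators $f_j$. Substituting $\Delta = \Gamma_{\underline{n}}$ and using $\KHilb{n,\underline{n},\Gamma_{\underline{n}}} = \KHilb{n,\underline{n}}$ from the previous step yields exactly the stated isomorphism, with $S_i = \{n_i + H\} \cap \Gamma_{\underline{n}}$.

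There is no real obstacle here: the whole point is that the $(p,q)$-curve hypothesis eliminates the non-trivial syzygy contributions that complicated the general unibranch case. The only care required is to check that the explicit bijection of Theorem \ref{trivial-fib} really is compatible with the $(p,q)$-simplification, i.e. that no additional relations among the $\lambda_i^j$ arise; this follows because any such relation would force $val_t(I)$ to strictly contain $\Gamma_{\underline{n}}$, which is already the valuation semigroup.
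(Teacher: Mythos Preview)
Your proposal is correct and matches the paper's approach exactly: the paper simply states ``As for Theorem \ref{trivial-fib} and its proof, the map $\Psi_{\underline{n}}$ remains a trivial fibration,'' which is precisely the specialization $\Delta = \Gamma_{\underline{n}}$ you describe, justified by Equation~(\ref{eq-s}). Your additional remark about why no further relations among the $\lambda^j_i$ arise is a helpful clarification but not strictly needed beyond what Theorem~\ref{trivial-fib} already provides.
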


Finally, in the case of $(p,q)$-curves, we obtain a stronger structural result for $[\KHilb{n}]$, since from Remark \ref{simplify-reps} we know that $[\PHilb{n_i}]$ is a power of $\Li$.

We denote by $$d(\underline{n}) =  n_1 + \delta + |\Gamma_{\underline{n}} |$$ the formula provided by Theorem \ref{crl:int-numbers-hilb} and by 
$M_n = \{ \underline{n} \text{ independent} \, | \, d(\underline{n}) = n \}$. 
\begin{crl} \label{expression-monomial} \label{crl-formula-monomial}
Let $(C,0)$ be a $(p,q)$-curve. We have 
$$[\KHilb{n}] = \sum_{\underline{n} \in M_n} [\KHilb{n, \underline{n}}] =$$

$$\sum_{\underline{n} \in M_n} \left( \prod_i [\PHilb{{n_i}}] \right) \Li^{- \sum_i |\{ n_i + H \} \cap \Gamma_{\underline{n}}|} = \sum_{\underline{n} \in M_n} \Li^{ \sum_i |\{ n_i + H \} \cap \Gamma| - \sum_i |\{ n_i + H \} \cap \Gamma_{\underline{n}}|}$$ for every $n,k \geq 1$.
\end{crl}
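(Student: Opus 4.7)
The plan is to assemble the formula in three stages, each invoking one of the results already established for $(p,q)$-curves earlier in the section.

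First, I would start from the partition $\KHilb{n} = \coprod_{\underline{n}} \KHilb{n,\underline{n}}$ produced by Theorem \ref{uniqueness-intersection}. The key simplification in the $(p,q)$ case is Theorem \ref{crl:int-numbers-hilb}, which says that the length of any ideal in $\KHilb{\star,\underline{n}}$ is completely determined by the independent $k$-uple $\underline{n}$ via the formula $d(\underline{n}) = n_1 + \delta + |\Gamma_{\underline{n}}|$. This means that the only independent $k$-uples contributing to a fixed length $n$ are exactly those in $M_n$, so passing to motivic classes yields the first equality
\[
[\KHilb{n}] = \sum_{\underline{n} \in M_n} [\KHilb{n,\underline{n}}].
\]

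Second, I would apply Corollary \ref{crl:multiple-fibers} to each $\KHilb{n,\underline{n}}$. The corollary states that $\Psi_{\underline{n}}$ is a trivial (Zariski locally trivial) fibration over $\KHilb{n,\underline{n}}$ with fiber $\mathbb{A}^{\sum_i |\{n_i + H\} \cap \Gamma_{\underline{n}}|}$, and the source is $\prod_i \PHilb{n_i}$. Invoking Proposition \ref{fib} and dividing by the affine class of the fiber gives
\[
[\KHilb{n,\underline{n}}] = \left(\prod_i [\PHilb{n_i}]\right) \Li^{- \sum_i |\{n_i + H\} \cap \Gamma_{\underline{n}}|},
\]
which is the second equality in the statement.

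Third, I would substitute the known class of the principal Hilbert scheme. By Remark \ref{simplify-reps}, for any $n_i \in \Gamma$ there is an isomorphism $\PHilb{n_i} \cong \mathbb{A}^{|\{n_i+H\} \cap \Gamma|}$, so $[\PHilb{n_i}] = \Li^{|\{n_i + H\} \cap \Gamma|}$. Plugging this into the previous formula and taking the product turns the product of classes into a single power of $\Li$, yielding the final expression
\[
[\KHilb{n}] = \sum_{\underline{n} \in M_n} \Li^{\sum_i |\{n_i+H\} \cap \Gamma| - \sum_i |\{n_i+H\} \cap \Gamma_{\underline{n}}|}.
\]

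There is essentially no obstacle here: the three results of this section were precisely designed to collapse the general unibranch formula of Corollary \ref{crl-formula-unibranch} (which still carries the syzygy strata $[S_a]$) to a monomial form in the $(p,q)$ setting. The only thing worth double-checking is that the index set $M_n$ in the first equality really captures all non-empty strata, which is immediate from Theorem \ref{crl:int-numbers-hilb} since the length is a function of $\underline{n}$ alone once the syzygy contribution collapses as in Equation (\ref{eq-s}).
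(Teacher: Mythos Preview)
Your proposal is correct and matches the paper's intended argument: the corollary is stated without an explicit proof, but the surrounding text makes clear it is obtained exactly as you describe, by combining Theorem \ref{crl:int-numbers-hilb} (length determined by $\underline{n}$), Corollary \ref{crl:multiple-fibers} (trivial fibration), and Remark \ref{simplify-reps} ($[\PHilb{n_i}]$ is a power of $\Li$). The only minor imprecision is the phrase ``dividing by the affine class of the fiber''---since Corollary \ref{crl:multiple-fibers} gives an actual product isomorphism, you can simply take motivic classes on both sides without needing to invert $\Li$ or invoke Proposition \ref{fib}.
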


\begin{rmk}
 As a side note, we observe that $M_n$ also corresponds to the set of all possible sets of semigroup ideal generators, for semigroup ideals such that $$|\Gamma \setminus \displaystyle\coprod_i \{ n_i + \Gamma \}| =n .$$   
\end{rmk}

Since the semigroup is equivalent to the topology as explained in \cite[Theorem 21]{Brieskorn}, the polynomials $[\KHilb{n}]$ also admit a topological meaning for curve singularities.

\begin{crl} \label{struct}
Let $(C,0)$ be a $(p,q)$-curve. Then, $[\KHilb{n}]$ is a polynomial in $\Li$ with positive coefficients for every $n,k \geq 1$. Moreover, this polynomial is a topological invariant since $\Gamma$ is. 
\end{crl}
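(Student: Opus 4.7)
The plan is to derive both assertions directly from the explicit expression
\[ [\KHilb{n}] = \sum_{\underline{n} \in M_n} \Li^{\sum_i |\{n_i + H\} \cap \Gamma| - \sum_i |\{n_i + H\} \cap \Gamma_{\underline{n}}|} \]
provided by Corollary \ref{crl-formula-monomial}, so the real content is a bookkeeping check on the exponents and a transfer of topological invariance from $\Gamma$ to every ingredient of the formula.

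First I would address polynomiality and positivity. Since $\Gamma_{\underline{n}} = \coprod_i \{n_i + \Gamma\}$ is by construction a subset of $\Gamma$ for every independent tuple $\underline{n}$, the inclusion
\[ \{n_i + H\} \cap \Gamma_{\underline{n}} \subseteq \{n_i + H\} \cap \Gamma \]
holds for each $i$, and hence the exponent of $\Li$ in the summand indexed by $\underline{n}$ is a non-negative integer. Thus each summand is a non-negative power of $\Li$, which makes $[\KHilb{n}]$ a polynomial in $\Li$ whose coefficients are cardinalities of the sets $\{\underline{n} \in M_n : e(\underline{n}) = j\}$, where $e(\underline{n})$ denotes the exponent in question. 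These coefficients are clearly non-negative; positivity then amounts to observing that $M_n$ itself is non-empty (for instance the single-generator tuple $(n)$ belongs to $M_n$ when $n \in \Gamma$, and otherwise an explicit tuple can be produced from the smallest generators of $\Gamma$), so at least one of the coefficients is strictly positive.

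Next I would show the topological invariance. By \cite[Theorem 21]{Brieskorn}, cited in the statement of the corollary, the semigroup $\Gamma$ is a topological invariant of $(C,0)$; hence so are the hole set $H = \mathbb{N} \setminus \Gamma$ and the delta invariant $\delta = |H|$. Inspecting the definition of $M_n$, an independent $k$-tuple $\underline{n}$ is determined purely by arithmetic relations in $\Gamma$ (independence in the sense of Definition \ref{def:independent-int-numbers} can be reformulated as $n_i \notin \coprod_{j \neq i} \{n_j + \Gamma\}$), and the condition $d(\underline{n}) = n_1 + \delta + |\Gamma_{\underline{n}}| = n$ also depends only on $\Gamma$. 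Therefore the index set $M_n$ is a topological invariant, and each exponent $\sum_i |\{n_i + H\} \cap \Gamma| - \sum_i |\{n_i + H\} \cap \Gamma_{\underline{n}}|$ is computed entirely from $\Gamma$ and $\underline{n}$. Combining these two observations with the formula above proves that $[\KHilb{n}]$, viewed as a polynomial in $\Li$, depends only on $\Gamma$ and hence is a topological invariant.

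There is no real obstacle here: the bulk of the work has already been done in establishing the closed-form expression of Corollary \ref{crl-formula-monomial}, and the present statement is a structural consequence. The only point that needs a touch of care is confirming that the set $M_n$ and the exponents are expressible purely in terms of $\Gamma$ (so that topological invariance of $\Gamma$ propagates), but this is transparent once the definitions of independence and of $d(\underline{n})$ are unwound.
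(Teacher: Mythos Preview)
Your proposal is correct and follows exactly the approach the paper intends: the paper gives no separate proof for this corollary, treating it as an immediate structural consequence of the closed formula in Corollary~\ref{crl-formula-monomial} together with the sentence preceding the statement (that $\Gamma$ determines the topology via \cite[Theorem 21]{Brieskorn}). Your write-up simply spells out the bookkeeping that the paper leaves implicit.

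One small remark: your extra step arguing that $M_n$ is non-empty is unnecessary and not quite right in general. ``Positive coefficients'' here should be read as ``non-negative integer coefficients'' (i.e.\ the class lies in $\mathbb{N}[\Li]$), which is exactly what your inclusion $\{n_i+H\}\cap\Gamma_{\underline{n}}\subseteq\{n_i+H\}\cap\Gamma$ already establishes. For some pairs $(n,k)$ the scheme $\KHilb{n}$ is genuinely empty (for instance whenever $k>mult_C$), so $M_n=\emptyset$ and $[\KHilb{n}]=0$; there is nothing further to prove in that case.
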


We conclude this section by noting that Corollaries \ref{crl-formula-monomial} and \ref{struct} generalize the affine cell decomposition proposed by \cite[Theorem 12]{piontk}, as well as the results of \cite[Theorem 2.11]{gorsky-catalan} for the cells dimension. The corollaries also extend the results of \cite[Section 3.2]{ors} since the polynomiality can be deduced from \cite[Proposition 6]{ors}, but no formula is directly given for $[\KHilb{n}]$. Moreover, the positivity of the coefficients and the topological invariance, as well as the relationship to principal Hilbert schemes, are new to the best of our knowledge.

\section{Contact loci and motivic classes of unibranch Hilbert schemes of plane curves} \label{sec:princ-surj}
\label{sec:contact-loci-principal}

This section aims to relate the contact locus $\Cont^n_C = \Cont^n$ of order $n$ of a given unibranch curve $(C,0)$, introduced in Definition \ref{def:contact-locus}, to the principal Hilbert schemes of the given curve.
Thinking of points of $\Lo$ as parameterizations of unibranch plane curves in $\mathbb A^2$, we naturally have a map
to principal Hilbert schemes, opposite to the map $Z : \widehat{\mathcal{O}}_{C,0} \to \coprod_k S^k \Lo$ introduced in \cite{sympow-gorsky}. 

\begin{rmk}
We mention here that no result in this section makes strict use of the unibranch condition, which could then be removed. We just keep this condition to work in the same setting used throughout the rest of this paper.
\end{rmk}

\begin{rmk} \label{tr1}
By definition, the points of $\Cont^n \subseteq \Lo$ are pairs of formal power series $p,q \in \C[[t]]$ such that $val_t(f(p,q))=n$. From Lemma \ref{lemma-intersection-num}, up to excluding the measure-zero set of degenerate pairs in $\Cont^n$ (see Remark \ref{rmk-degenerate-par}) this means the curve parameterized by $(p,q)$ has intersection number $n$ with the curve $C$. Moreover, we observe that for every $I = (g, f) \in \PHilb{n}$ we have $(x,y)^n \subseteq I$. 
Therefore, different arcs $\gamma, \gamma'$ such that $\pi_{n+1}(\gamma) = \pi_{n+1}(\gamma')$ correspond to the same point of $\PHilb{n}$.  
\end{rmk}

\begin{rmk}
Following the notation of Definition \ref{def-shape-of-param}, let $i \in I$ index the set of coefficients $a_{m_i} \neq 0$. Then, the degenerate condition translates to the existence of some $k > 1$ such that $gcd(n,m_i) = k$ for all $i \in I$. From the proof of Theorem \ref{thm-puiseux}, it also follows that Puiseux parameterizations are non-degenerate.
\end{rmk}

\begin{df} \label{def:principal-surj}
Let $n\geq 1$. We define the map 
$$ \Pi: \pi_{n+1}(\Cont^n)  \to \PHilb{n} , \; \gamma \mapsto \operatorname{Im}(\gamma) = \operatorname{Im}(\gamma) \times_{\mathbb A^2} C$$ where $\operatorname{Im}$ denotes the scheme-theoretic image.
\end{df}

\begin{thm} \label{thm-morph}
The map $\Pi$ of Definition \ref{def:principal-surj} is a morphism.
\end{thm}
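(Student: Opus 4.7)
The plan is to construct $\Pi$ as a natural transformation between functors of points and then deduce that $\Pi$ is a morphism of schemes by Yoneda's lemma, using the representability of $\PHilb{n}$ (Definition \ref{def-hilb}). For a test scheme $S$, an $S$-point of $\pi_{n+1}(\Cont^n)$ is a family of truncated arcs, that is an $S$-morphism $\gamma : D_n \times S \to \mathbb A^2 \times S$, where $D_n = \operatorname{Spec}(\C[t]/(t^{n+1}))$, subject to the contact condition $\gamma^*(f) = t^n u$ for a unit $u \in \mathcal{O}_{D_n \times S}^\times$. To each such $\gamma$ I associate the closed subscheme $W_\gamma = \overline{\gamma(D_n \times S)} \cap (C \times S) \subseteq C \times S$, defined by the ideal $J + (f)\mathcal{O}_{\mathbb A^2 \times S}$ where $J = \ker(\mathcal{O}_{\mathbb A^2 \times S} \to \gamma_* \mathcal{O}_{D_n \times S})$; functoriality of the construction in $S$ (i.e.\ compatibility of scheme-theoretic image and fiber product with base change) is routine.

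The main technical claim is then that $W_\gamma \to S$ defines an $S$-point of $\PHilb{n}$, i.e.\ a flat, finite, degree-$n$ family of principal subschemes of $C$ supported on $\{0\} \times S$. For the flatness and degree count I work locally on $S = \operatorname{Spec}A$: writing $M = \operatorname{Im}(A[x,y] \to A[t]/(t^{n+1}))$ for the image subring under $x \mapsto p,\ y \mapsto q$, one has $\mathcal{O}_{W_\gamma} \cong M / f(p,q) M = M / t^n M$ because $u$ is a unit. The contact hypothesis gives $t^n u \in M$, whence $t^n \in M$, so the inclusion $M/t^n M \hookrightarrow A[t]/(t^n)$ places $M/t^n M$ inside a free $A$-module of rank $n$; combined with a fiberwise dimension count this should force $M/t^n M$ to be locally free of rank $n$. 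For the principality of each fiber $W_{\gamma,s}$ over a geometric point $s \in S$, I lift $(p_s,q_s)$ to an honest arc $(\tilde p,\tilde q) \in \C[[t]]^2$, which parameterizes an irreducible analytic plane curve $D_s = V(g_s)$ near the origin; then $W_{\gamma,s} = C \cap D_s$ has ideal $(\bar g_s) \subseteq \ring$, which is principal of colength $\operatorname{val}_t(f(\tilde p,\tilde q)) = n$ by Lemma \ref{lemma-intersection-num}.

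The main obstacle I anticipate is carrying out the flatness / uniform rank-$n$ argument uniformly in families, particularly where some test fibers correspond to degenerate arcs (i.e.\ $\gcd$ of the valuations of $p,q$ is $>1$), in which case the image subring $M$ may fail to surject onto $A[t]/(t^n)$. I expect to handle this by first restricting to the Zariski open of non-degenerate parameterizations (see Remark \ref{rmk-degenerate-par}) and then extending across the complement using that it has strictly smaller dimension, or via a stratification argument on $\pi_{n+1}(\Cont^n)$. Upgrading the pointwise principality to a family-level statement poses no extra difficulty once flatness is established, since ``principal of colength $n$'' cuts out the subscheme $\PHilb{n} \subseteq \Hilb_0^n(C)$ and we have already verified the condition on each geometric fiber. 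With these pieces in place, Yoneda and the representability of $\PHilb{n}$ produce the desired morphism $\Pi$.
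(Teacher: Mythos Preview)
Your strategy coincides with the paper's: both produce $\Pi$ via the representability of the Hilbert functor by exhibiting a flat degree-$n$ family of subschemes of $C$ over $\pi_{n+1}(\Cont^n)$. The paper does this more directly, writing down a single candidate universal subscheme $Z_J \subseteq C \times \pi_{n+1}(\Cont^n)$ (rather than working functorially over every test scheme $S$ as you do) and then simply asserting that the projection $Z_J \to \pi_{n+1}(\Cont^n)$ is finite, flat, and surjective of degree $n$, referring to an analogous construction in \cite{ila-curv} for details. Your route through arbitrary $S$ and Yoneda is formally equivalent but slightly less economical, since by Yoneda the single case $S = \pi_{n+1}(\Cont^n)$ already suffices. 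On the other hand, you engage more honestly with the flatness verification: your identification $\mathcal{O}_{W_\gamma} \cong M/t^nM$ and the embedding $M/t^nM \hookrightarrow A[t]/(t^n)$ are the right ingredients, and combined with a fiberwise surjectivity check (Nakayama on the cokernel) they do yield local freeness of rank $n$; the paper leaves all of this implicit. Your concern about degenerate arcs is handled in the paper not by stratification but simply by excluding them up front as a measure-zero locus (Remark~\ref{rmk-degenerate-par}).
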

\begin{proof}
The proof follows the same construction as \cite[Theorem 3.14]{ila-curv}. More in detail, we immediately observe that $\Pi$ is a well-defined map since $\operatorname{Im}(\gamma)$ is a finite length scheme on $C$.
Moreover, working locally the map $\Pi$ is defined as $\gamma \mapsto (\operatorname{ker}(\gamma) , f)$ where $f \in \C[[x,y]]$ locally defines $C$. 

Then, we have to prove that $\Pi$ is a morphism. 
It is not straightforward to deduce it directly from its definition, so we will use the representability of the more general Hilbert functor of Definition \ref{def-fun}, constructing a morphism that will correspond to the same map on the $\C$-points of $\pi_{n+1}(\Cont^n)$ and $\PHilb{n}$. 
We denote by $\mathcal{Z}$ the universal subscheme, corresponding under representability to $id_C$ and consider the subvariety 
\begin{center}
    $Z_{J} \subseteq C \times \pi_{n+1}(\Cont^n)$, $Z_{J }
 = \{ ( supp(\phi) \cap m^k , \phi ) \}$.
\end{center}
The projection $p$ is clearly finite, flat and surjective of degree $n$ - having as fibers the $n$ points of the support of the corresponding parameterized subscheme of length $n$ at $m$.
Then, we consider by representability the morphism $\Pi$ such that $Z_{J }= (H(\Pi))(\mathcal{Z}) =  \mathcal{Z} \times_{\PHilb{n}}\pi_{n+1}(\Cont^n)$.
\begin{center}
\begin{tikzcd}
& & & \mathcal{Z} \arrow[d, hook]\\
Z_J \arrow[r, hook] \arrow[urrr, bend left] \arrow[rd] & {C \times \pi_{n+1}(\Cont^n)} \arrow[d, "p"] \arrow[rr, "id_C \times \widetilde{\Pi} "] & & C \times \PHilb{n} \arrow[d, "\pi_2"] \\
& {\pi_{n+1}(\Cont^n)} \arrow[rr, "\widetilde{\Pi}"] &  & \PHilb{n}                          
\end{tikzcd}
\end{center}
By definition of $Z_J$, the morphism $\widetilde{\Pi}$ is then defined as associating to an arc the corresponding scheme as a point of the principal Hilbert scheme $\PHilb{n}$, corresponding to $\Pi$. Finally, on $\C$-points it corresponds to the explicit $\Pi$ considered above by construction.
\end{proof}

\begin{df} \label{def:unibr-scheme}
We say that a point $I \in \PHilb{n}$ is when $I \in \operatorname{Im}(\Pi)$. This equivalently means the principal ideal $I \subseteq \ring$ admits a generator $\Bar{g} \in \ring = \C[[x,y]]/(f)$ such that $g \in \C[[x,y]]$ is irreducible.
We denote by $$\UHilb^n_{0,1}(C)$$ the open component of $\PHilb{n}$ determined by the image of $\Pi$.
\end{df}

The rest of the section will be dedicated to studying the image and the fibers of $\Pi$. In particular, the interest in the open component $\UHilb_{0,1}^n(C) = \operatorname{Im}(\Pi)$ in the one-generator locus will be motivated by the Igusa zeta function, as its motivic class $[\UHilb^n_{0,1}(C)]$ will be related to the coefficients of $Z_C(s)$, and a threshold that makes it always non-empty therefore meaningful.
However, this is not all: we will also conjecture the surjectivity of $\Pi$ from the same mentioned threshold, hence conjecturing the equality $\UHilb^n_{0,1}(C) = \PHilb{n}$ of the unibranch open and the entire one-generator locus.

We conclude the first part of this section with an example that shows a limitation to the surjectivity of the map $\Pi$.
\begin{ex}  \label{ex-contactcusp}
Consider the cusp $(C,0)$ of Example \ref{ex-cusp}. It is easy to see that $$\Cont^5 = \emptyset$$ while $$\PHilb{5} \cong \mathbb P^1.$$ This implies that $\Pi$ cannot be surjective for $n=5$, since $\operatorname{Im}(\Pi) = \emptyset \subseteq \PHilb{5}$.
\end{ex}

\subsection{Holes of the Igusa zeta function and unibranch Hilbert scheme} \label{sec:t}
The goal of this section is to show that the irreducibility condition of Definition \ref{def:unibr-scheme} is not too strict, as there exists a threshold $N$ from which $\UHilb^n_{0,1}(C) \neq \emptyset$ for $n \geq N$, hence defining an interesting open of $\PHilb{n}$. Moreover, we will see that the threshold is entirely determined by (the topology of) the curve $C$.

From the definition of $\Pi$, it follows that $\Cont^n = \emptyset$ if and only if $\UHilb^n_{0,1} = \operatorname{Im}(\Pi) = \emptyset$. We already presented an example of this behaviour in Example \ref{ex-contactcusp}. Showing that there exists a threshold from which the contact loci become non-empty is the central argument leading to the following result.

\begin{prp} \label{threshold}
    Let $\rho: (Y, E) \rightarrow (\mathbb A^2, C)$ be an embedded resolution of the pair $(\mathbb A^2, C)$ as in Definition \ref{res}.
    Let $N_i$ denote the multiplicity of $E_i$ as in Definition \ref{mult} and let $N = \operatorname{max}_i N_i$. Then, $$\UHilb^n_{0,1}(C) \neq \emptyset$$ for all $n \geq N$.
\end{prp}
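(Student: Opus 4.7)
The plan is to show that, for every $n \geq N$, there exists an irreducible plane curve germ $D \subset (\mathbb{A}^2, 0)$ with $D \cdot C = n$. Such a germ is cut out near the origin by an irreducible $g \in \mathbb{C}[[x,y]]$, and its class $\bar g \in \ring$ generates a principal ideal in $\PHilb{n}$ (of length $D \cdot C = n$) admitting an irreducible representative, which is exactly a point of $\UHilb^n_{0,1}(C)$ in the sense of Definition~\ref{def:unibr-scheme}.

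I would build $D$ by push-forward from the resolution. Let $E_* \subset Y$ be an exceptional divisor meeting the strict transform $\tilde C$, set $N_* := N_{E_*} \leq N$, and let $q = \tilde C \cap E_*$. Because $\rho^{-1}(C)$ has simple normal crossings, I can pick local coordinates $(u,v)$ at $q$ with $\tilde C = \{v = 0\}$ and $E_* = \{u = 0\}$, and in a neighbourhood of $q$ the divisor $\rho^*C$ equals $\tilde C + N_* E_*$, as no other component of $\rho^{-1}(C)$ passes through $q$.

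For $n \geq N_* + 1$, I take $D_0 \subset Y$ to be the smooth germ $\{v = u^{n - N_*}\}$ at $q$, which is irreducible and satisfies $(D_0 \cdot \tilde C)_q = n - N_*$ and $(D_0 \cdot E_*)_q = 1$. Setting $D := \rho_*(D_0)$, the projection formula gives
\[
D \cdot C \;=\; D_0 \cdot \rho^*C \;=\; (n - N_*) + N_* \cdot 1 \;=\; n,
\]
and $D$ is an irreducible germ at $0$ since $\rho$ is proper and birational and $D_0$ is smooth and not contained in the exceptional locus. For the remaining case $n = N_*$ --- only relevant when $N = N_*$, since $N_* \leq N$ and otherwise $n \geq N > N_*$ --- I pick instead a smooth point $p \in E_*$ off $\tilde C$ and the other exceptional components, and take $D_0$ smooth at $p$ transverse to $E_*$; the projection formula then yields $D \cdot C = N_*\,(D_0 \cdot E_*)_p = N_*$. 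In both cases $D$ is the sought irreducible germ with $D \cdot C = n$, so $\UHilb^n_{0,1}(C) \neq \emptyset$.

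The key subtlety --- and the reason the threshold $N$ appears --- lies in the choice of the crossing point $q$ rather than a smooth point of some $E_i$: a smooth point of $E_i$ only realises intersection numbers in $N_i\,\mathbb{Z}_{>0}$, while at $q$ the tangency $(D_0 \cdot \tilde C)_q$ is a free non-negative integer, which is exactly what lets us sweep out every integer above $N_*$. The rest is the projection formula $\rho_*(D_0) \cdot C = D_0 \cdot \rho^* C$ applied to the decomposition $\rho^* C = \tilde C + \sum_i N_i E_i$ built into Definition~\ref{mult}.
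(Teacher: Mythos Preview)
Your proof is correct and rests on the same geometric observation as the paper's: the exceptional divisor $E_*$ meeting the strict transform $\tilde C$ (which for a unibranch curve is the last divisor created, so in fact $N_* = N$) lets one realise intersection number $N$ at a generic point of $E_*$, and every $N+e$ with $e\geq 1$ at the crossing $q = \tilde C \cap E_*$. The paper reaches the same conclusion by a slightly different route: it shows $\Cont^n \neq \emptyset$ for $n \geq N$ by reading off the non-empty terms in Denef's formula (Theorem~\ref{denef-contact}), namely those indexed by $J=\{i'\}$ (giving $n=N$) and $J=\{i',\tilde C\}$ (giving $n = hN+e$), and then uses $\UHilb^n_{0,1}(C)=\operatorname{Im}(\Pi)$. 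You instead construct the irreducible germ $D$ directly on $Y$ and push it down via the projection formula and Definition~\ref{def:unibr-scheme}. Your argument is more self-contained and avoids invoking the motivic-measure machinery, while the paper's phrasing stays within the contact-locus language that the rest of Section~\ref{sec:contact-loci-principal} is written in; substantively the two arguments coincide.
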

\begin{proof}
We want to show that $\Cont^n$ is non-empty for $n \geq N$, since this would imply that $\UHilb_{0,1}^n(C)$ is also non-empty. 
Let $i'$ be the index of the divisor corresponding to $N$. Equivalently, it corresponds to the last divisor obtained by the last blow-up of $\rho$.
By Theorem \ref{denef-contact}, we know that $\Cont^n \neq \emptyset$ when 
\begin{center}
$E_J^o \neq \emptyset \;$ and $\; \displaystyle\sum_{j \in J} k_j N_j =n$    
\end{center}
for some $J \subseteq I$ and $k_j >0$.
Since $\rho$ is the embedded resolution of a plane curve, it is known that we can only have three cases:
\begin{itemize}
        \item Consider $E_J = E_i$: then, we have contributions to degrees $kN_j$ for all $k>0$. In particular, we have $N$ of the divisor $E_{i'}$.
        \item Consider $E_J = E_i \cap E_j$ for $E_i \cap E_j \neq \emptyset$: then, we have contributions to degrees $h N_i + e N_j$ for all $k,e >0$.
        \item Consider $E_J = E_{i'} \cap \Tilde{C}$ (as the last divisor is the only one intersecting the strict transform): then, we have contributions to degrees $h N + e \times 1$ for all $k,e >0$. In particular, we have $N + e$ for every $e > 0$.
\end{itemize}
\end{proof}

\begin{rmk}
    Proposition \ref{threshold} and its proof are stated in the setting of unibranch curves, as that is the case of interest for this paper. However, the proof admits an immediate generalization to the case of multibranch curves.
    Indeed, given $C = \coprod_{l=1}^b C_l$ a multibranch curve, it is enough to consider all the contributions $E_J = E_{i'} \cap \Tilde{C_l}$ for every $l=1, \dots, b$.
\end{rmk}

We conclude this part with a conjecture, supported by computations in some examples that we omit for the sake of brevity. We expect the open component $\UHilb_{0,1}^n(C)$ not only to be non-empty starting from the threshold $n \geq N$ but actually to retrieve the entire one-generator locus if $n$ is big enough.
\begin{conj} \label{cj}
Let $\rho: (Y, E) \rightarrow (\mathbb A^2, C)$ be an embedded resolution of the pair $(\mathbb A^2, C)$ as in Definition \ref{res}.
    We have $$\UHilb^n_{0,1}(C) = \PHilb{n}$$ for $n >> 0$.    
\end{conj}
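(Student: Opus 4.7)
The plan is to strengthen Proposition \ref{threshold} from non-emptiness to equality. We need to show that for $n\gg 0$, every principal ideal $I=(\bar g)\in\PHilb{n}$ admits a generator whose lift to $\C[[x,y]]$ is irreducible, equivalently, that every length-$n$ subscheme of $C$ at $0$ is cut out by a unibranch plane curve germ $D$.

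First, I would translate the problem via Puiseux parameterizations. Using $(p_C,q_C)$ and Lemma \ref{lemma-intersection-num}, the ideal $I=(\bar g)$ of valuation $n$ determines, up to units of $\ring$, a series $\phi(t):=\bar g(p_C,q_C)\in\C[[t]]$ with $val_t(\phi)=n$. Conversely, an irreducible germ $D$ with parameterization $(\alpha(t),\beta(t))$ and equation $g_D\in\C[[x,y]]$ realizes $I$ as $[D\cap C]$ if and only if $g_D(p_C,q_C)$ is a unit multiple of $\phi$. So the conjecture reduces to: for $n\gg 0$, every such $\phi$ in the image of $\ring\hookrightarrow\C[[t]]$ is represented by an irreducible $g_D$.

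Second, I would use the embedded resolution $\rho:(Y,E)\to(\mathbb A^2,C)$ to parameterize candidate germs by their contact data. Following the proof of Proposition \ref{threshold}, an irreducible $D$ with $C\cdot D=n$ is described by a multiplicity tuple $(k_j)_{j\in J}$ satisfying $\sum k_jN_j=n$ together with a concrete choice of Puiseux series realizing that stratum; the space of such data in each stratum is an algebraic family whose dimension grows linearly in $n$. On the other hand, by Remark \ref{simplify-reps}, $\dim\PHilb{n}=|\{n+H\}\cap\Gamma|$ stabilizes at $\delta$ once $n$ exceeds the conductor $c(\Gamma)$. Thus, the evaluation morphism from the moduli of unibranch germs to $\PHilb{n}$ is expected to be dominant for $n$ large; combined with $\UHilb^n_{0,1}(C)\subseteq\PHilb{n}$ being open and $\PHilb{n}$ being an affine space (hence irreducible), dominance plus openness would conclude equality.

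The main obstacle is upgrading dominance to surjectivity, which requires constructing an explicit irreducible lift for each $\bar g$ rather than merely showing existence on a dense subset. Concretely, given $\phi$, I would attempt an inductive Newton--Puiseux construction of $(\alpha,\beta)$: fix the leading Puiseux pairs of $D$ so that $D$ shares enough jets with $C$ to produce intersection number $n$, then adjust higher-order coefficients of $(\alpha,\beta)$ to match the tail of $\phi$. For $n\gg 0$, one expects no obstruction because the coefficients of $\phi$ beyond valuation $n$ are indexed by $\Gamma$, while the free coefficients of $(\alpha,\beta)$ are unconstrained. The delicate point is that each matching equation must be solved while preserving irreducibility of the resulting germ, which I would handle by keeping the Newton polygon of $g_D$ with a single compact edge throughout the induction, using the Denef-type description from Theorem \ref{denef-contact} to match strata of $\Cont^n_C$ with the affine cell structure of $\PHilb{n}$.
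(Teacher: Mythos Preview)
The statement you are trying to prove is labeled \emph{Conjecture} in the paper (Conjecture~\ref{cj}), and the paper gives no proof: it is presented as an expectation ``supported by computations in some examples that we omit for the sake of brevity.'' There is therefore no paper proof to compare against; your proposal is an attempt to settle an open problem.

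On its own terms, the proposal has two genuine gaps. First, the step ``dominance plus openness would conclude equality'' is simply false: an open dense subset of an irreducible affine space need not be the whole space (the complement of a hypersurface in $\mathbb{A}^\delta$ is open, dense, and proper). So even if you showed that $\Pi$ is dominant onto $\PHilb{n}\cong\mathbb{A}^{|\{n+H\}\cap\Gamma|}$, you would not obtain $\UHilb^n_{0,1}(C)=\PHilb{n}$ without an additional argument ruling out a nonempty closed complement. Second, the fallback you propose --- an explicit Newton--Puiseux construction matching the tail of $\phi$ while keeping the Newton polygon of $g_D$ with a single compact edge --- is precisely the hard content of the conjecture, and you have only described it at the level of heuristics. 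The assertion that ``for $n\gg 0$ one expects no obstruction'' is exactly what needs to be proved; in particular, one must show that the system of equations in the free coefficients of $(\alpha,\beta)$ obtained by matching $g_D(p_C,q_C)$ to $\phi$ up to a unit of $\ring$ is solvable for \emph{every} $\phi$ of valuation $n$, not merely generically, and that the solution can be chosen with $g_D$ irreducible. Neither of these is established in your outline, and the paper's decision to leave the statement as a conjecture suggests the authors did not see how to close these gaps either.
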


\subsection{The space of branches and unibranch Hilbert schemes} \label{section:smooth-reps}

The goal of this section is to use $\Pi$ and the smooth reparameterizations of Definition \ref{def:smooth-reps} to relate $[\UHilb^n_{0,1}(C)]$, hence conjecturally $[\PHilb{n}]$ and $[\KHilb{n}]$ in the case of $(p,q)$-curves thanks to Corollary \ref{crl-formula-monomial}, to the motivic measure $\mu(\Cont^n) = \frac{[\pi_{n+1}(\Cont^n)]}{\Li^{2(n+1)}}$ of the $n$-th contact locus of $C$. 
We start by introducing the action of smooth reparameterizations on (truncated) arcs, originally considered in \cite{luengo}, and understand how it relates to Hilbert schemes.

\begin{df}  \label{def-smoothaction}
We denote by $S_0^{n+1}$ the group of automorphisms of $\C[[t]] / (t^{n+1})$ induced by elements of valuation $1$ in $\C[[t]] / (t^{n+1})$, as per Definition \ref{def:smooth-reps} under the truncation map $\pi_{n+1}$.
On $\Lo$ we consider the action induced by $S_0^{n+1}$ and the composition, defined as 
    \begin{center}
        $\bullet : S_0^{n+1} \times \pi_{n+1}(\Cont^n) \to \pi_{n+1}(\Cont^n)$, $(\alpha, \phi) \mapsto \phi \bullet \alpha = \phi \circ \operatorname{Spec}(\alpha)$. 
    \end{center}    
\end{df}

\begin{rmk} \label{def:space-of-branches}
This is equivalent to the $\operatorname{Aut}_\C$-action of \cite[Section 2]{sympow-gorsky} and \cite[Section 2]{luengo}, as well as \cite[Definition 3.4]{ila-curv}. Moreover, with the quotient $$\mathcal{B} = \Lo / S_0^{n+1}, \, \, p_{b} : \Lo \to \mathcal{B}$$ by the $S_0^{n+1}$-action we obtain the space of branches $\mathcal{B}$ in the sense of \cite[Section 2, Definition]{sympow-gorsky} and \cite[Section 2, pg. 7]{luengo}.
Given $\gamma \in \Lo$ we will denote by $$b_\gamma = [\gamma] = S_0^{n+1} \bullet \gamma$$ its corresponding class in $\mathcal{B}$. The motivic measure $\mu$ and the truncation maps $\pi_k$ also descend to the branch space, as described in \cite{luengo}.

\end{rmk}

\begin{rmk}   \label{rmk2}
We want to relate the motivic measure $\mu(\Cont^n) = \frac{[\pi_{n+1}(\Cont^n)]}{\Li^{2(n+1)}}$ to $[\pi_{n+1}(\mathcal{B}^n)]$, as the latter will correspond to $[\UHilb_{0,1}^n(C)]$. 
We observe that the $S_0^{n+1}$-action is not free, therefore directly proving that $p_b$ is a Zariski locally trivial fibration can be hard. We will follow instead the reasoning of \cite[Definitions and Proposition 2, pg. 66-67]{luengo}.
We start by simplifying $\Cont^n$ with the partition $$\Cont^n =  \coprod_k \Cont^{n,k} , \; \; \Cont^{n,k} = \Cont^n \cap \pi_k^{-1}(0) \setminus \Cont^n \cap \pi_{k+1}^{-1}(0)  .$$
Then, given $b_\gamma \in \Cont^{n,k}$ %i.e. val_t(\gamma)=k
we have that 
\begin{equation} \label{indep}
\mu(S_0^{n+1} \bullet \gamma) = \mu(p_b^{-1}(b_\gamma)) = \Li^{-k-1}(\Li -1) 
\end{equation} hence not depending on $\gamma$. 
Therefore, we have
\begin{equation} \label{prp2}
[\pi_{n+1}(\Cont^{n,k})] = \Li^{-k-1}(\Li -1) \Li^{n+1} [\pi_{n+1}(\mathcal{B}^{n,k})]
\end{equation}
where $p_b(\Cont^{n,k}) = \mathcal{B}^{n,k} \subseteq \mathcal{B}$ denotes the image of $\Cont^{n,k}$ in the branch space.
\end{rmk}

We now want to understand the sets $\mathcal{B}^{n,k}$ in the space of branches, and what are the points of $\UHilb^n_{0,1}(C)$ that correspond to points of $\mathcal{B}^{n,k}$. They correspond to ``easier'' valuation problems to solve.

\begin{prp} \label{legame}
The points of $\mathcal{B}^{n,k}$ are in one-to-one correspondence with curves with intersection number $n$ with $C$ and Puiseux parameterization $(t^k,t^\star + H.O.T.)$ where $\star$ is determined by $n,k,C$.  
\end{prp}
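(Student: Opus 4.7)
The plan is to construct an explicit bijection between $\mathcal{B}^{n,k}$ and the set of unibranch plane curves with multiplicity $k$ and intersection number $n$ with $C$. The forward map sends a class $b_\gamma \in \mathcal{B}^{n,k}$ represented by an arc $\gamma = (p, q) \in \Cont^{n,k}$ to the branch parameterized by $(p, q)$ in $\mathbb{A}^2$. By Lemma \ref{lemma-intersection-num} the intersection number is $val_t(f(p, q)) = n$, and the defining condition of $\Cont^{n,k}$ forces $\min(val_t(p), val_t(q)) = k$, which equals the multiplicity of the associated branch.

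To see that this is well-defined on $\mathcal{B}^{n,k}$, I would observe that two arcs in the same $S_0^{n+1}$-orbit differ by a smooth reparameterization (Definition \ref{def:smooth-reps}) and therefore parameterize the same branch in $\mathbb{A}^2$. Applying the Puiseux normalization of Remark \ref{rmk}, I would pick in each class the canonical representative $(t^k, t^\star + H.O.T.)$ with $\star = val_t(q) \geq k$, which is unique up to the finite action of $k$-th roots of unity already contained in $S_0^{n+1}$.

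Bijectivity then follows naturally: injectivity from the uniqueness of Puiseux parameterizations up to this finite ambiguity, and surjectivity from the fact that any curve with Puiseux form $(t^k, t^\star + H.O.T.)$ and intersection $n$ with $C$ yields, upon $(n+1)$-truncation, an arc in $\pi_{n+1}(\Cont^{n,k})$ whose $\mathcal{B}$-class is the required preimage. The value $\star$ is then extracted from the constraint $n = val_t(f(t^k, t^\star + H.O.T.))$: expanding $f$ along the Puiseux parameterization $(p_C, q_C)$ of $C$ produces an equation that determines $\star$ from $n$, $k$, and the Puiseux data of $C$.

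The main obstacle is the explicit determination of $\star$. In the $(p,q)$-curve setting, for example, this amounts to comparing the leading exponents arising in $f(t^k, t^\star + H.O.T.)$, with generic (non-cancellation) cases yielding $\star$ directly and degenerate cases requiring a more delicate analysis of higher-order terms. For general unibranch $C$, the same task requires carefully unwinding the Puiseux expansion of $f$ and controlling the resulting combinatorics of the semigroup $\Gamma_C$. Pinning down $\star$ uniquely from $n, k, C$, as claimed, is where the technical work of the proof lies.
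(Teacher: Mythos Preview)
Your proposal is correct and takes essentially the same approach as the paper: pick a Puiseux-normalized representative $(t^k, t^m u)$ in each class $b_\gamma$ and read off $\star = m$ from the leading term of the equation $f(t^k, t^m u) = t^n v$ for a unit $v$. The paper's proof is three sentences and omits the explicit bijection, well-definedness, and genericity discussions you supply, but the core idea is identical.
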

\begin{proof}
Let $b_\gamma \in \mathcal{B}^{n,k}$ for some $\gamma \in \Cont^{n,k} \subseteq \Cont^n$. Then, by definition $ \gamma = (t^k, t^mu)$  with $m \geq k$ and $u \in 1+ t \C[[t]]$. 
Moreover, $\gamma \in \Cont^n$ additionally implies that $$f(\gamma) = f(t^k, t^m u) = t^n v$$ for some $v \in \C[[t]]^\times$. The (leading term of the) equation will determine $val_t(t^mu) = \star$.
\end{proof}

\begin{rmk} \label{tr2}
We will now see that all these curves are identified if they differ by a smooth parameterization, or in degree $\geq n$. This means that $\pi_r(t^m + H.O.T)$ determines the corresponding point of the Hilbert scheme for $$r = \left\lfloor \frac{n + 2\delta -1}{d_C} \right\rfloor +1$$ where $d_C = val_t(q_C)$ (we use the notation of Definition \ref{def:puiseux-param}), since any coefficient $u_i$ of $t^mu$ for $i> \left\lfloor \frac{n + 2\delta -1}{d_C} \right\rfloor$ appears in terms of valuation $> n + 2\delta -1$ in $\ring$, hence not effectively contributing to $\PHilb{n}$. This is the optimal bound.
\end{rmk}

We immediately see that the truncations of Remarks \ref{tr1} and \ref{tr2} completely determine the fibers of $\Pi$, and the action introduced in Definition \ref{def-smoothaction}. The points of the unibranch open will therefore coincide with some points of the branch space $\mathcal{B}$.

\begin{prp} \label{fibers-pi} \label{fib}
Let $I \in \UHilb^n_{0,1}(C)$. Let $g \in \ring$ be any generator of $I$ and let $\gamma_g \in \pi_{n+1}(\Cont^{n,k})$ denote the Puiseux parameterization of the corresponding curve $V(g) \subseteq \mathbb{A}^2$ for some $k$.
Then, we have  $$[\Pi^{-1}(I)] = (\Li -1) \Li^{-k-1} [F^{n,k}]$$ 
where $$[F^{n,k}] = \pi_{n+1}(p_2(\mathcal{B}^{n,k})) \setminus\pi_r(p_2(\mathcal{B}^{n,k})) .$$
\end{prp}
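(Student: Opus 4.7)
The plan is to factor $\Pi$ through the branch-space quotient and compute the two resulting contributions separately. The scheme-theoretic image $\operatorname{Im}(\gamma) \times_{\mathbb A^2} C$ depends only on the class $b_\gamma = S_0^{n+1} \bullet \gamma$, since smooth reparameterizations leave the image invariant, so $\Pi$ descends to a morphism $\Pi_b : \pi_{n+1}(\mathcal{B}^n) \to \UHilb^n_{0,1}(C)$ fitting into $\Pi = \Pi_b \circ p_b$. Then $\Pi^{-1}(I) = p_b^{-1}(\Pi_b^{-1}(I))$, reducing the computation of $[\Pi^{-1}(I)]$ to the product of the class of a generic $S_0^{n+1}$-orbit with $[\Pi_b^{-1}(I)]$.

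First I would extract the factor $(\Li-1)\Li^{-k-1}$ using Remark \ref{rmk2}, in particular the identity $\mu(S_0^{n+1}\bullet \gamma) = (\Li-1)\Li^{-k-1}$ recorded in Equation (\ref{indep}), which is valid for any $\gamma \in \Cont^{n,k}$. The crucial point is that this value is constant along the stratum; restricting the generic-orbit analysis of Equation (\ref{prp2}) to the preimage $\Pi^{-1}(I)$ and invoking the Zariski-locally trivial fibration formula from the background section yields the relation $[\Pi^{-1}(I)] = (\Li-1)\Li^{-k-1} [\Pi_b^{-1}(I)]$.

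Next I would identify the fiber $\Pi_b^{-1}(I)$ with $F^{n,k}$. By Proposition \ref{legame}, a branch in $\mathcal{B}^{n,k}$ is represented by a curve with Puiseux parameterization $(t^k, t^\star + H.O.T.)$, where $\star$ is determined by $n$, $k$ and $C$, so such a branch is determined by its second coordinate at the level $\pi_{n+1}$. By Remark \ref{tr2}, two such branches give rise to the same ideal in $\PHilb{n}$ if and only if their second coordinates agree modulo $t^{r}$, where $r = \lfloor (n+2\delta-1)/d_C \rfloor + 1$ is the optimal threshold beyond which higher-order terms are killed in $\ring$. Hence $\Pi_b^{-1}(I)$ is in bijection with the liftings of $\pi_r(p_2(\gamma_g))$ to $\pi_{n+1}(p_2(\gamma_g))$, which is exactly the fiber of the truncation map $\pi_{n+1}(p_2(\mathcal{B}^{n,k})) \twoheadrightarrow \pi_r(p_2(\mathcal{B}^{n,k}))$ encoded by $[F^{n,k}]$. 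Combining the two steps gives the claimed identity.

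The main obstacle will be the second step, specifically verifying that the correspondence between liftings of $\pi_r(p_2(\gamma_g))$ and branches in $\Pi_b^{-1}(I)$ is free of redundancy: distinct liftings must represent genuinely distinct branches, rather than the same branch reached by a different smooth reparameterization. Here the Puiseux normalization $(t^k, \cdots)$ already rigidifies the first coordinate, and the non-degeneracy condition of Remark \ref{rmk-degenerate-par} rules out residual valuation-increasing reparameterizations; combined with the optimality of the bound $r$ asserted in Remark \ref{tr2}, these observations should close the argument.
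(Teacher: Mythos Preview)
Your proposal is correct and follows essentially the same approach as the paper: both compute $[\Pi^{-1}(I)]$ by combining the $S_0^{n+1}$-orbit contribution $(\Li-1)\Li^{-k-1}$ from Equation~(\ref{indep}) with the truncation fiber $F^{n,k}$ coming from Remark~\ref{tr2}, and both rely on Proposition~\ref{legame} to normalize branches in $\mathcal{B}^{n,k}$. The only ingredient the paper adds that you do not mention explicitly is the use of Mather's $\mathcal{C}$-equivalence (Theorem~\ref{thm-matherequiv}) to check that the answer is independent of the chosen generator $g$ of $I$, since in the principal case any other generator has the form $g_1 = ug$ with $u \in \ring^\times$ and hence $\gamma_{g_1} = \gamma_g$; this is a well-definedness check rather than a structural difference in the argument.
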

\begin{proof}
We already observed in Remark \ref{tr2} that if $$\pi_r(p_2(\gamma_g)) = \pi_r (p_2(\gamma))$$ then $$(g) = (g_\gamma)$$ as points of $\PHilb{n}$.
Then, by Lemma \ref{back:intersection-numb} we have that $val_t(C(\gamma_g)) = n$ and by definition $$\Pi^{-1}(I) = \{ \gamma \in \pi_{n+1}(\Cont^{n,k}) \, | \, \operatorname{Im}(\gamma) = V(g) \} = $$
$$\{  \gamma_g \circ \alpha \in \pi_{n+1}(\Cont^{n,k}) \, | \, \alpha \in \operatorname{Aut}^{n+1}_{\C ,0} \} = S_0^{n+1} \bullet \gamma_g$$ since 
all equivalent parameterizations of the same curve are described in Definition \ref{def:smooth-reps}.
Finally, using the $\mathcal{C}$-equivalence presented in Definition \ref{def-matherequiv} in $\Cont^{n,k}$, it is immediate to see that in the principal case any other representative of $I$ is of the form $$g_1 = u g$$ with $u \in \ring^\times$. Therefore, we have $\gamma_g = \gamma_{g_1}$ 
and, using Equation (\ref{indep}), we are done.    
\end{proof}  

\begin{rmk} \label{leg}
    When changing representative to $g_1 = uf + vg$ for $u \in \C[[x,y]]^\times$, $v \in \C[[x,y]]$ we leave $\Cont^{n,k}$ for $\Cont^{n, mult_C}$. While all jets are needed to compute $[\pi_{n+1}(\Cont^n)]$, that is, we consider both $\gamma_g$ and $\gamma_{g_1}$, when computing $[\UHilb^n_{0,1}]$ we can use just one of the representatives, so $[\pi_{n+1}(\Cont^{n,k})]$ for example is enough.
\end{rmk}

\begin{rmk} \label{map-descend-branch}
With the previous proof, we equivalently observed that $\Pi$ is $S_0^{n+1}$-invariant. This leads to the map  
$$
\Pi_b : \pi_{n+1}(\mathcal{B}^n) \to \UHilb^n_{0,1}(C), \, \, b_\gamma = p_b(\gamma) \mapsto \Pi(\gamma)
$$
where $\mathcal{B}^n = p_b(\Cont^n)$ denotes the quotient of $\Cont^n$ by the $S_0^{n+1}$-action, as introduced in Remark \ref{def:space-of-branches}, and $p_b$ commutes with $\pi_{n+1}$ as explained in \cite{luengo}.    
\end{rmk}

Knowing the fibers of the morphism $\Pi$, hence of $\Pi_b$, we are now finally able to use it to construct a geometric morphism that, thanks to Proposition \ref{geom}, will connect the motivic class of $\UHilb_{0,1}^n(C)$ with the $n$-th contact locus $\Cont^n$.

\begin{prp} \label{prp1} \label{prp-def-Fnk}
    The restriction $\Pi_b^k: \pi_{n+1}(\mathcal{B}^{n,k})\to \UHilb^n_{0,1}(C)$ of the map $\Pi_b$ defined in Remark \ref{map-descend-branch} is a geometric morphism up to the fiber $F^{n,k}$ introduced by Proposition \ref{fib}, implying $$[\operatorname{Im}(\Pi_b^k)] [F^{n,k}] = [\pi_{n+1}(\mathcal{B}^{n,k})]$$ for every $n,k \geq 1$.
\end{prp}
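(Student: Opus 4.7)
The plan is to first identify the motivic class of the fibers of $\Pi_b^k$ pointwise, then promote this to the claimed product-type identity by exhibiting an explicit geometric bijection to which Proposition \ref{geom} applies.

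For the pointwise fiber identification, I would exploit the factorization $\Pi|_{\pi_{n+1}(\Cont^{n,k})} = \Pi_b^k \circ p_b$ from Remark \ref{map-descend-branch}. By Equation (\ref{indep}) in Remark \ref{rmk2}, the quotient map $p_b$ restricted to the stratum $\Cont^{n,k}$ has motivically constant fibers of class $(\Li-1)\Li^{-k-1}$. Combined with Proposition \ref{fibers-pi}, which gives $[\Pi^{-1}(I)] = (\Li-1)\Li^{-k-1}[F^{n,k}]$ for each $I \in \operatorname{Im}(\Pi_b^k)$, cancellation forces the fiber of $\Pi_b^k$ over any point in its image to have motivic class equal to $[F^{n,k}]$.

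To promote this pointwise fact to the global identity $[\pi_{n+1}(\mathcal{B}^{n,k})] = [\operatorname{Im}(\Pi_b^k)][F^{n,k}]$, I would construct a morphism
\[
\Phi : \operatorname{Im}(\Pi_b^k) \times F^{n,k} \longrightarrow \pi_{n+1}(\mathcal{B}^{n,k})
\]
and show it is a geometric bijection on $\C$-points, so that Proposition \ref{geom} gives the equality of motivic classes. By Proposition \ref{legame}, each $I \in \operatorname{Im}(\Pi_b^k)$ corresponds to a unique branch class with Puiseux parameterization $(t^k, t^\star + \text{H.O.T.})$, and by Remark \ref{tr2} the ideal $I$ determines this second coordinate only up to the truncation level $r$. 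The variety $F^{n,k}$ parameterizes exactly the coefficients from degree $r$ up to $n$ of the second coordinate. The map $\Phi$ then sends $(I,\eta)$ to the branch class of $(t^k, \sigma(I) + \eta)$, where $\sigma(I)$ denotes a canonical $r$-truncation of the Puiseux second coordinate attached to $I$; bijectivity on $\C$-points follows from the fiber computation above together with Remark \ref{tr2}.

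The main obstacle is producing the section $\sigma$ as an algebraic morphism and not merely a set-theoretic map. The natural route is to stratify $\operatorname{Im}(\Pi_b^k)$ according to the coefficient pattern of the normal-form generator $f_n$ of Remark \ref{simplify-reps}, and to build $\sigma$ piecewise on each locally closed stratum by running the explicit Puiseux algorithm on the defining equation of the generator; summing the product identity across the stratification yields the global identity in $\mathbf{K_0}(\operatorname{Var}_\C)$. As a cleaner alternative, one could attempt to establish directly that $\Pi_b^k$ is Zariski locally trivial with fiber $F^{n,k}$ and appeal to the fibration proposition from the background section instead of constructing a global section.
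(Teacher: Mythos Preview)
Your approach is correct and matches the paper's own proof in spirit. The paper's argument is extremely terse: it records that $\Pi$ is a morphism (Theorem \ref{thm-morph}), that the fibers of $\Pi_b^k$ are described by Proposition \ref{fibers-pi}, that surjectivity onto the image is tautological, and then defers the passage from ``constant fiber $F^{n,k}$'' to the product identity entirely to the argument pattern of \cite[Theorem 3.14]{ila-curv}. Your proposal unpacks precisely that deferred step---building a geometric bijection with $\operatorname{Im}(\Pi_b^k)\times F^{n,k}$ so that Proposition \ref{geom} applies---and your remark that the section $\sigma$ may only be algebraic after a suitable stratification is the honest version of what the paper leaves implicit in its one-line citation.
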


\begin{proof}
We reiterate the steps of \cite[Theorem 3.14]{ila-curv}. We already proved in Theorem \ref{thm-morph} that $\Pi$ is a morphism, and with Proposition \ref{fib} we have the description of the fiber of $\Pi_b^k$. Finally, it is surjective by definition since $\UHilb^n_{0,1}(C) = \operatorname{Im}(\Pi)$.
\end{proof}

We finally have all the information needed to relate contact loci with the unibranch open. Indeed, putting together Proposition \ref{prp1} and Remark \ref{rmk2}, we obtain the following chain of relationships. 
\begin{crl} \label{crrr1}
Let $(C,0)$ be a unibranch curve. Let $\operatorname{Im}(\Pi_b^k) \subseteq \UHilb^n_{0,1}(C)$ denote the image of the restriction $\Pi_b^k: \pi_{n+1}(\mathcal{B}^{n,k}) \to \UHilb^n_{0,1}(C)$ of the map $\Pi_b$ introduced in Remark \ref{map-descend-branch}. We have
$$ [\pi_{n+1}(\Cont^{n,k})] = \Li^{n-k}(\Li -1) [\operatorname{Im}(\Pi_b^k)][F^{n,k}]$$
for every $n,k \geq 1$.
\end{crl}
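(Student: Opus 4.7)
The plan is to directly combine two results already established earlier in this section: the expression relating the motivic class of $\pi_{n+1}(\Cont^{n,k})$ to that of its image in the branch space (from Remark \ref{rmk2}), and the geometric bijection between $\pi_{n+1}(\mathcal{B}^{n,k})$ and $\operatorname{Im}(\Pi_b^k)$ up to the fiber $F^{n,k}$ (Proposition \ref{prp1}).

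First, I would invoke Equation (\ref{prp2}) in Remark \ref{rmk2}, which gives
\[
[\pi_{n+1}(\Cont^{n,k})] = \Li^{-k-1}(\Li-1)\,\Li^{n+1}\,[\pi_{n+1}(\mathcal{B}^{n,k})].
\]
This reduction was itself obtained by partitioning $\Cont^n$ into the strata $\Cont^{n,k}$ (on which the truncated $S_0^{n+1}$-action is free of a computable fiber), so that the measure of the $S_0^{n+1}$-orbit of any jet $\gamma \in \Cont^{n,k}$ is a constant $\Li^{-k-1}(\Li-1)$ independent of $\gamma$, allowing the quotient by $S_0^{n+1}$ to be promoted to an identity of motivic classes.

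Next, I would substitute the identity furnished by Proposition \ref{prp1},
\[
[\pi_{n+1}(\mathcal{B}^{n,k})] = [\operatorname{Im}(\Pi_b^k)]\,[F^{n,k}],
\]
which rests on the facts that $\Pi$ is a morphism (Theorem \ref{thm-morph}), it descends to $\Pi_b$ on the branch space (Remark \ref{map-descend-branch}), its fiber over each point of $\operatorname{Im}(\Pi_b^k)$ is identified with $F^{n,k}$ (Proposition \ref{fib}), and Proposition \ref{geom} converts geometric bijectivity into an equality of classes.

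Combining the two displays gives
\[
[\pi_{n+1}(\Cont^{n,k})] = \Li^{-k-1}(\Li-1)\,\Li^{n+1}\,[\operatorname{Im}(\Pi_b^k)]\,[F^{n,k}] = \Li^{n-k}(\Li-1)\,[\operatorname{Im}(\Pi_b^k)]\,[F^{n,k}],
\]
which is the claimed formula. There is no genuine obstacle here: all the conceptual work has already been distributed among Remark \ref{rmk2}, Proposition \ref{fib}, and Proposition \ref{prp1}, and what remains is purely a bookkeeping step collecting the powers of $\Li$. The only point worth double-checking is the exponent: $-k-1+n+1 = n-k$, confirming the stated $\Li^{n-k}$.
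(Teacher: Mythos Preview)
Your proposal is correct and follows exactly the approach the paper intends: the corollary is stated immediately after the sentence ``putting together Proposition \ref{prp1} and Remark \ref{rmk2}, we obtain the following chain of relationships,'' and you have carried out precisely that combination, including the correct simplification of the exponent $-k-1+n+1=n-k$.
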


This also means we just obtained the coefficients of the Igusa zeta function 
$$Z_C(s)=\int_{\pi_1^{-1}(0)} (\Li^{-ord_C})^s d\mu = \sum_{n \geq 1} \mu(\Cont^n_C) \Li^{-ns}  = \sum_{n \geq 1} \frac{[\pi_{n+1}(\Cont^{n})]}{ \Li^{2(n+1)}} \Li^{-ns}$$ from the unibranch open in the one-generator locus of the Hilbert scheme of the given curve.
\begin{crl}
Let $(C,0)$ be a unibranch curve and let $Z_C(s)$ 
be the Igusa zeta function of $C$. Then, we have
$$Z_C(s)= (\Li-1) \Li^{-2}\sum_{n \geq 1}\Li^{-n(s+1)} \sum_k \Li^{-k}[\operatorname{Im}(\Pi_b^k)] [F^{n,k}]$$  
and $[\operatorname{Im}(\Pi_b^k)]$ is related to $[\UHilb_{0,1}^n(C)]$ in Proposition \ref{legame}.
\end{crl}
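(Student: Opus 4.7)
The plan is to show that this corollary is a direct consequence of Corollary \ref{crrr1} combined with the definition of the motivic Igusa zeta function and the partition $\Cont^n = \coprod_k \Cont^{n,k}$ introduced in Remark \ref{rmk2}.

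First, I would unpack $Z_C(s)$ using Definition \ref{def-igusa}. Since $\mathbb{A}^2$ is smooth of dimension $2$, Definition \ref{def:contact-locus} gives
$$\mu(\Cont^n_C) = \frac{[\pi_{n+1}(\Cont^n_C)]}{\Li^{2(n+1)}},$$
so that
$$Z_C(s) = \sum_{n\geq 1} \mu(\Cont^n_C)\Li^{-ns} = \sum_{n\geq 1} \frac{[\pi_{n+1}(\Cont^n_C)]}{\Li^{2(n+1)}}\Li^{-ns}.$$
Next I would use the disjoint decomposition $\Cont^n = \coprod_{k\geq 1} \Cont^{n,k}$ from Remark \ref{rmk2}, which, being a finite stratification at the level of $(n{+}1)$-jets (since $\pi_{n+1}(\Cont^{n,k})$ is empty for $k$ larger than some bound depending on $n$), passes to motivic classes additively:
$$[\pi_{n+1}(\Cont^n_C)] = \sum_k [\pi_{n+1}(\Cont^{n,k})].$$

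The second step is then to substitute Corollary \ref{crrr1}, which asserts
$$[\pi_{n+1}(\Cont^{n,k})] = \Li^{n-k}(\Li-1)[\operatorname{Im}(\Pi_b^k)][F^{n,k}],$$
into the expression above. Pulling out the common factor $(\Li-1)$ and collecting powers of $\Li$, we get
$$\mu(\Cont^n_C) = (\Li-1)\Li^{-n-2}\sum_k \Li^{-k}[\operatorname{Im}(\Pi_b^k)][F^{n,k}].$$
Plugging this back into the series for $Z_C(s)$ and combining $\Li^{-n}$ with $\Li^{-ns}$ into $\Li^{-n(s+1)}$ gives
$$Z_C(s) = (\Li-1)\Li^{-2}\sum_{n\geq 1}\Li^{-n(s+1)}\sum_k \Li^{-k}[\operatorname{Im}(\Pi_b^k)][F^{n,k}],$$
which is the stated identity. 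The final clause is a mere reference to Proposition \ref{legame}, so nothing further needs to be proved.

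There is essentially no hard step: the whole argument is bookkeeping, and all the substantive content has already been established in Corollary \ref{crrr1} (which in turn relies on the geometric bijection of Proposition \ref{prp1} and the $S_0^{n+1}$-orbit measure computation in Remark \ref{rmk2}). The only mild subtlety is to check that the partition indexed by $k$ gives a convergent/well-defined expansion inside $\widehat{\mathbf{M}}_{\mathbf{0}}[[T]]$, but this is immediate because for each fixed $n$ only finitely many $k$ contribute (any arc with $\pi_k^{-1}(0)$-order exceeding its intersection order with $C$ forces $\Cont^{n,k}=\emptyset$), so the inner sum is a finite sum of motivic classes.
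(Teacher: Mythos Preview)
Your proposal is correct and follows exactly the route the paper takes: the paper states this corollary immediately after Corollary \ref{crrr1} without a separate proof, relying on the preceding sentence which unpacks $Z_C(s)$ as $\sum_{n\geq 1} \frac{[\pi_{n+1}(\Cont^n)]}{\Li^{2(n+1)}}\Li^{-ns}$ and then lets the reader substitute Corollary \ref{crrr1} term by term via the partition $\Cont^n=\coprod_k \Cont^{n,k}$ of Remark \ref{rmk2}. Your added remark about finiteness of the inner sum over $k$ is a reasonable sanity check that the paper leaves implicit.
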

\begin{rmk}
We mention here that the relationship between principal Hilbert schemes and motivic integration was also explored in \cite{wyss}. However, we still do not know the connection between our and that work, and our connection between $\UHilb_{0,1}^n(C)$ and $\PHilb{n}$ is just conjectural.
\end{rmk}

Finally, combining Theorem \ref{denef-contact} with \cite[Proposition 3.24] {ila-curv} it follows that $[\pi_{n+1}(\Cont^{n,k})]$ is fully determined by an embedded resolution of $(C,0)$. As a consequence, we obtain an explicit expression in terms of certain divisors for $[\operatorname{Im}(\Pi_b^k)]$ and $[\UHilb_{0,1}^n(C)]$. 

\begin{crl} \label{c}
Let $(C,0)$ be a unibranch curve.
Let $\rho: (Y, E) \rightarrow (\mathbb A^2, C)$ be an embedded resolution as in Definition \ref{res}. 
Then, we have
$$[\operatorname{Im}(\Pi_b^k)] = \frac{1}{(\Li-1)\Li^{n}} \frac{\Li^k}{[F^{n,k}]}
\sum_{\substack{J \subseteq I, J \neq \emptyset, j \in J \\ k_j \geq 1 \, | \, k =\sum m_j k_j}}
(\Li-1)^{|J|-1}[E_J^o]
\left( \sum_{j \in J, \, \sum k_jN_j = n}
\Li^{-\sum_{j \in J} k_j \nu_j} \right)$$
for every $n,k \geq 1$ where $m_j$ is defined in \cite[Definition 3.21]{ila-curv} and $[\operatorname{Im}(\Pi_b^k)]$ is related to $[\UHilb_{0,1}^n(C)]$ in Proposition \ref{legame}. 
\end{crl}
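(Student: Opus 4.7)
The plan is to combine the motivic identity from Corollary \ref{crrr1} with an embedded-resolution computation of the refined truncated contact locus $[\pi_{n+1}(\Cont^{n,k})]$. First, I would simply solve Corollary \ref{crrr1} for $[\operatorname{Im}(\Pi_b^k)]$, obtaining
$$[\operatorname{Im}(\Pi_b^k)] \;=\; \frac{[\pi_{n+1}(\Cont^{n,k})]}{\Li^{\,n-k}(\Li-1)\,[F^{n,k}]}.$$
So the content of the statement reduces to producing a formula for $[\pi_{n+1}(\Cont^{n,k})]$ in terms of an embedded resolution, after which the prefactors $\Li^{n-k}$, $(\Li-1)$ and the dimensional normalization $\Li^{n\dim \mathbb{A}^2}$ coming from Denef's formula should collapse to the global $\Li^{k}/((\Li-1)\Li^n[F^{n,k}])$ appearing in the claim.

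Second, I would expand $[\pi_{n+1}(\Cont^{n,k})]$ using Denef's formula for motivic contact loci (Theorem \ref{denef-contact}) applied to the pair $(\mathbb{A}^2, C)$, but in the refined form provided by \cite[Proposition 3.24]{ila-curv}. Denef's formula by itself only tracks the condition $\sum_{j\in J} k_j N_j = n$ arising from $ord_C$. The refinement records, in addition, the vanishing order of the first Puiseux coordinate $t^k$, which under the resolution $\rho$ translates to an extra constraint $k = \sum_{j\in J} m_j k_j$, where $m_j$ is the multiplicity introduced in \cite[Definition 3.21]{ila-curv} (the order along $E_j$ of the pullback of the distinguished coordinate $x$). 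This is exactly the output of \cite[Proposition 3.24]{ila-curv}, obtained by a second application of the motivic change-of-variables formula along $\rho$, restricted to the stratum $\pi_k^{-1}(0)\setminus\pi_{k+1}^{-1}(0)$.

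Third, plugging the refined expression for $[\pi_{n+1}(\Cont^{n,k})]$ into the displayed identity for $[\operatorname{Im}(\Pi_b^k)]$ and simplifying the $\Li$-powers (the $\Li^{n-k}$ from Corollary \ref{crrr1} combines with $\Li^{-n}\Li^{k}$ coming from Denef's normalization and the truncation passage $\pi_n \leftrightarrow \pi_{n+1}$) produces the claimed formula, with the joint index set
$$\bigl\{\,(J,\{k_j\}_{j\in J})\,\bigm|\, J\subseteq I,\; J\ne\emptyset,\; k_j\ge 1,\; \textstyle\sum k_j N_j = n,\; \sum k_j m_j = k\,\bigr\}.$$

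The main obstacle is the bookkeeping of the dimensional prefactors together with the correct interpretation of the $k$-refinement: one must check that the stratification $\Cont^n = \coprod_k \Cont^{n,k}$ lifts cleanly through $\rho$ to a stratification by the bivariate order $(ord_C,\, ord_x)$, so that arcs meeting $E_J^o$ to orders $(k_j)_{j\in J}$ contribute exactly when $\sum k_j N_j = n$ and $\sum k_j m_j = k$; no mixed contribution from the strict transform or from non-generic intersections of the $E_j$ should spoil the combinatorics. Granted this compatibility—which is the content of \cite[Proposition 3.24]{ila-curv}—the remainder of the argument is purely algebraic manipulation in $\widehat{\mathbf{M}}_{\mathbf{0}}$.
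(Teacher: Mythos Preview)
Your proposal is correct and follows essentially the same approach as the paper: the paper derives the corollary by inverting Corollary \ref{crrr1} and then computing $[\pi_{n+1}(\Cont^{n,k})]$ via Denef's formula (Theorem \ref{denef-contact}) refined by \cite[Proposition 3.24]{ila-curv}, exactly as you outline. Your proposal in fact contains more detail on the bookkeeping than the paper, which simply states that the combination of those two results yields the expression.
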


Using this expression, we can deduce structural and invariance properties for $[\UHilb^n_{0,1}(C)]$.

\begin{crl}
Let $(C,0)$ be a unibranch curve. Then, $[\UHilb^n_{0,1}(C)]$ is a polynomial in $\Li$ for every $n \geq 1$. This polynomial is also a topological invariant of $(C,0)$ since the resolution is.
\end{crl}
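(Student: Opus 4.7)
The plan is to decompose $\UHilb^n_{0,1}(C)$ according to the multiplicity of the unibranch generators, show each piece has polynomial class via the parameterization description, and then extract topological invariance from the embedded-resolution formula.

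First, I would establish the finite locally-closed decomposition
$$\UHilb^n_{0,1}(C) = \coprod_{k} \operatorname{Im}(\Pi_b^k).$$
For any $I \in \UHilb^n_{0,1}(C)$, any irreducible generator $g$ yields a Puiseux parameterization of $V(g)$ whose first component is $t^k$, and $k$ is independent of the choice of generator because any two differ by a unit in $\ring$ and define the same plane curve (and hence the same multiplicity). The index $k$ is bounded by $n$ thanks to the intersection constraint $C.V(g)=n$, so the decomposition is finite.

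Next, I would argue that each $[\operatorname{Im}(\Pi_b^k)]$ is a polynomial in $\Li$. By Proposition \ref{legame}, $\pi_{n+1}(\mathcal{B}^{n,k})$ is identified with an affine space parameterizing the free coefficients of Puiseux parameterizations $(t^k, t^\star + H.O.T.)$ truncated at level $n+1$, with $\star$ determined by $n, k, C$. Likewise, $F^{n,k}$, defined in Proposition \ref{prp-def-Fnk} as a difference of two truncation levels of coefficients, is an affine space. Thus both $[\pi_{n+1}(\mathcal{B}^{n,k})]$ and $[F^{n,k}]$ are powers of $\Li$, and the equality $[\operatorname{Im}(\Pi_b^k)][F^{n,k}] = [\pi_{n+1}(\mathcal{B}^{n,k})]$ from Proposition \ref{prp1} forces $[\operatorname{Im}(\Pi_b^k)]$ to be a (positive) power of $\Li$. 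Summing over the finite index set $k$ then gives $[\UHilb^n_{0,1}(C)]$ as a polynomial in $\Li$.

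For topological invariance, I would invoke Corollary \ref{c}, which expresses each $[\operatorname{Im}(\Pi_b^k)]$ in terms of the numerical data $(N_j, \nu_j, m_j)$ and motivic classes $[E_J^o]$ coming from an embedded resolution of $(C,0)$. Summing over $k$ preserves this resolution-based description. By classical work on plane curve singularities, the weighted dual graph of an embedded resolution is a topological invariant of $(C,0)$, equivalently encoded by the Puiseux pairs, the semigroup $\Gamma_C$, or the link. Since the $E_J^o$ decompose into affine cells whose classes are determined combinatorially by the dual graph, and the multiplicities and discrepancies are read off the same combinatorial data, the resulting polynomial depends only on the topological type of $(C,0)$.

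The main obstacle is that Corollary \ref{c} has apparent $(\Li-1)^{-1}$ factors for a single $k$, making polynomiality non-obvious from that formula alone. The parameterization-based presentation via $\mathcal{B}^{n,k}$ bypasses this difficulty by providing a direct affine-space description of each stratum; the remaining care is in verifying that the $S_0^{n+1}$-reparameterization action has been fully normalized out, so that the truncation at level $n+1$ of the branch space really is an affine space rather than a more delicate quotient.
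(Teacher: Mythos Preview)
Your decomposition $\UHilb^n_{0,1}(C) = \coprod_k \operatorname{Im}(\Pi_b^k)$ has a genuine gap: it is not disjoint, and your justification for disjointness is mistaken. You argue that two irreducible generators of $I$ ``differ by a unit in $\ring$ and define the same plane curve.'' They do differ by a unit in $\ring = \C[[x,y]]/(f)$, but when you lift to $\C[[x,y]]$ in order to speak of the plane curve $V(g)$, you pick up an arbitrary multiple of $f$: if $g$ and $g'$ both generate $I$ then $g' = \tilde{u}g + vf$ in $\C[[x,y]]$ with $\tilde{u}$ a unit and $v$ arbitrary, so $V(g)$ and $V(g')$ are in general different plane curves with different multiplicities. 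Remark~\ref{leg} says this explicitly: passing from $g$ to $uf + vg$ moves the associated jet from $\Cont^{n,k}$ to $\Cont^{n,\,mult_C}$. Hence the same $I$ may lie in $\operatorname{Im}(\Pi_b^k)$ for several $k$, and summing the classes overcounts.

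The paper avoids this by not attempting a disjoint decomposition over $k$. It invokes Remark~\ref{leg} together with Proposition~\ref{legame} to reduce the polynomiality of $[\UHilb^n_{0,1}(C)]$ to that of each $[\operatorname{Im}(\Pi_b^k)]$, and then establishes the latter through the resolution formula of Corollary~\ref{c}: $[F^{n,k}]$ is a power of $\Li$ by Proposition~\ref{fibers-pi}, and each $[E_J^o]$ is polynomial in $\Li$ because the exceptional divisors in a plane-curve resolution are copies of $\mathbb{P}^1$ with finitely many points removed, or single points. Your alternative route to the polynomiality of each $[\operatorname{Im}(\Pi_b^k)]$ via the claim that $\pi_{n+1}(\mathcal{B}^{n,k})$ is an affine space is attractive, but, as you yourself flag, Proposition~\ref{legame} only gives a set-theoretic bijection and does not by itself show the quotient is affine; the paper sidesteps this by going through the resolution instead. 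Your topological-invariance argument is essentially the same as the paper's.
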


\begin{proof}
From Remark \ref{leg} and Proposition \ref{legame}, we can prove the polynomiality in $\Li$ of $[\UHilb_{0,1}^n(C)]$ by proving it for $[\operatorname{Im}(\Pi_b^k)]$. To prove the polynomiality of $[\operatorname{Im}(\Pi_b^k)]$, we prove that all the motivic classes involved in Corollary \ref{c} are polynomial in $\Li$.
From Proposition \ref{fib}, it follows that $[F^{n,k}]$ is a power of $\Li$.
Then, since divisors can only be $\mathbb P^1$, $E_J^o$ is a $\mathbb P^1$ with a finite number $k_J$ of points removed, or a point. Therefore, we have $$[E_J^o] = (\Li +1) - k_i \; \text{ or } \; [E_J^o] = 1 .$$
The equivalence between the geometric and topological data then follows from \cite[Theorem 21]{Brieskorn}.
\end{proof}

From these corollaries, Conjecture \ref{cj} would lead to an expression for $[\PHilb{n}]$, and in the case of $(p,q)$-curves for $[\KHilb{n}]$, in terms of the resolution. This would be in perfect agreement with the topological invariance obtained in Remark \ref{simplify-reps} and Corollary \ref{struct} respectively.

\section{References}
\printbibliography[heading=none]

\end{document}